\numberwithin{equation}{section}
\title{The limiting distributions of large heavy Wigner and arbitrary random matrices}
\author{Camille Male\footnote{UMPA, ENS de Lyon, 46 all\'ee d'Italie, 69007 Lyon, France. camille.male@ens-lyon.fr}}
\date{}
\newtheorem{Th}{Theorem}[section]
\newtheorem{Def}[Th]{Definition}
\newtheorem{Prop}[Th]{Proposition}
\newtheorem{PropDef}[Th]{Proposition/Definition}
\newtheorem{Lem}[Th]{Lemma}
\newtheorem{Ass}{Assumption}
\newtheorem{Cor}[Th]{Corollary}
\renewcommand\leq\leqslant
\renewcommand\geq\geqslant
\newcommand{\multsetl}{\big\{\!\!\big\{}
\newcommand{\multsetr}{\big\}\!\!\big\}}
\def\Tr{\mathrm{Tr}}
\def\esp{\mathbb E}
\def\etc{,\ldots ,}
\def\toN{^{(N)}}
\def\toNs{^{(N)*}}
\def\limN{\underset{N \rightarrow \infty}\longrightarrow}
\def\Nlim{\underset{N \rightarrow \infty}\lim}
\def\eps{\varepsilon}
\def\Gcyc{\mathcal G_{cyc}\langle \mathbf x, \mathbf x^*\rangle}
\def\eq{\begin{eqnarray*}}
\def\qe{\end{eqnarray*}}
\def\eqa{\begin{eqnarray}}
\def\qea{\end{eqnarray}}
\begin{document}
\maketitle

\begin{center}
\begin{minipage}{12cm}
\begin{center}{\sc abstract:}\end{center}
 {The model of heavy Wigner matrices generalizes the classical ensemble of Wigner matrices: the sub-diagonal entries are independent, identically distributed along to and out of the diagonal, and the moments its entries are of order $\frac 1 N$, where $N$ is the size of the matrices. Adjacency matrices of Erd\"os-Renyi sparse graphs and matrices with properly truncated heavy tailed entries are examples of heavy Wigner matrices. We consider a family $\mathbf X_N$ of independent heavy Wigner matrices and a family $\mathbf Y_N$ of arbitrary random matrices, independent of $\mathbf X_N$, with a technical condition (e.g. the matrices of $\mathbf Y_N$ are deterministic and uniformly bounded in operator norm, or are deterministic diagonal). We characterize the possible limiting joint $^*$-distributions of $(\mathbf X_N, \mathbf Y_N)$ in the sense of free probability. We find that they depend on more than the $^*$-distribution of $\mathbf Y_N$. We use the notion of distributions of traffics and their free product to quantify the information needed on $\mathbf Y_N$ and to infer the limiting distribution of $(\mathbf X_N, \mathbf Y_N)$. We give an explicit combinatorial formula for joint moments of heavy Wigner and independent random matrices. When the matrices of $\mathbf Y_N$ are diagonal, we give recursion formulas for these moments. We deduce a new characterization of the limiting eigenvalues distribution of a single heavy Wigner.}

\end{minipage}
\end{center}
{\bf keywords: $^*$-distribution, asymptotic freeness, Wigner, heavy-tailed random variables, Erd\"os-Renyi graphs.}

\tableofcontents

\section{Introduction}

\subsection{Motivations}

\noindent Let $\mathbf A_N = (A_1 \etc A_p)$ be a family of random $N$ by $N$ matrices with complex entries, whose entries have all their moments. Following random matrix and free probability terminology, we call (mean) {\bf $^*$-distribution} of $\mathbf A_N$ the map
	\eq
		\Phi_{\mathbf A_N} : P \mapsto \esp \Big[ \frac 1 N \Tr \big[ P(\mathbf A_N) \big] \Big],
	\qe
defined on the set of non commutative $^*$-polynomials, i.e. finite complex linear combinations of words in indeterminates $a_1 \etc a_p, a_1^* \etc a_p^*$. When it exists, we call limiting $^*$-distribution of $\mathbf A_N$ the pointwise limit of $\Phi_{\mathbf A_N}$ when $N$ goes to infinity, and say that $\mathbf A_N$ converges in $^*$-distribution.
\\
\\The notion of {\bf asymptotic $^*$-freeness} introduced by Voiculescu gives a rule to compute the limiting $^*$-distribution of a large class of random matrices as their size goes to infinity (see \cite{AGZ,DYK,BBGS11,CC,NIC93,NEA,Shl96} for examples). Recall its definition.

\begin{Def}[Asymptotic $^*$-freeness]~\label{Def:Freeness}
\\Let $\mathbf A_1 \etc \mathbf A_p$ be families of $N$ by $N$ random matrices having a mean limiting joint $^*$-distribution
	\eq
		\Phi : P \mapsto \Nlim \esp\bigg[ \frac 1 N \Tr \Big[ P(\mathbf A_1 \etc \mathbf A_p) \Big] \bigg],
	\qe

\noindent defined on the set of non commutative $^*$-polynomials in indeterminates $\mathbf a_1 \etc \mathbf a_p$. The families $\mathbf A_1 \etc \mathbf A_p$ are asymptotically $^*$-free if and only if for all $^*$-polynomials $P_1, P_2, \dots$, one has $\Phi\big( P_j(\mathbf a_{i_j}) \big)=0$, $i_j\neq i_{j+1}$ for all $j\geq 1$ implies $\Phi\big( P_1(\mathbf a_{i_1}) \dots P_n(\mathbf a_{i_n}) \big) = 0$ for all $n\geq 1$.
\end{Def}

\noindent One of the main examples concerns {\bf independent Wigner and arbitrary random matrices}. Recall that $X_N$ is a Wigner matrix whenever it is Hermitian with independent and centered sub-diagonal entries, such that the diagonal and the extra diagonal entries of $\sqrt N X_N$ are identically distributed according to probability measures, say $\nu$ and $\mu$ respectively, that possess all their moments. Let $\mathbf X_N$ be a family of $N$ by $N$ independent Wigner matrices and $\mathbf Y_N$ a family of $N$ by $N$ arbitrary matrices, possibly random but independent of $\mathbf X_N$. Assume that $\mathbf Y_N$ converges in $^*$-distribution and assume some control on $\mathbf Y_N$ (namely a concentration and a tightness property, see Assumptions 2 and 3 below, e.g. the matrices of $\mathbf Y_N$ are deterministic and uniformly bounded in operator norm). Then Voiculescu's asymptotic freeness theorem \cite{AGZ} states that the limiting $^*$-distribution of $(\mathbf X_N, \mathbf Y_N)$ exists and depends only on the limiting $^*$-distribution of $\mathbf Y_N$ and of the variances of extra diagonal entries of the Wigner matrices (see Corollary \ref{Cor:WignerCase}).
\\
\\This fact reflects a universality phenomenon for eigenvalues statistics of large random matrices, since {\bf the limiting $^*$-distribution of $(\mathbf X_N, \mathbf Y_N)$ does not depend on the details of the law of the entries of Wigner matrices}. Such a result is useful since the convergence in $^*$-distribution of $(\mathbf X_N, \mathbf Y_N)$ implies (and is actually equivalent to) the convergence in moments of the (mean) empirical eigenvalues distribution of any Hermitian matrix $H_N$, obtained as a fixed $^*$-polynomial in $\mathbf X_N$ and $\mathbf Y_N$. Recall that the empirical eigenvalues distribution of an $N$ by $N$ matrix $H_N$ is the probability measure
	\eq
		\mathcal L_{H_N} = \esp\bigg[  \frac 1 N \sum_{i=1}^N \delta_{\lambda_i} \bigg],
	\qe

\noindent where $\lambda_1 \etc \lambda_N$ are the eigenvalues of $H_N$ and $\delta_\lambda$ denotes the Dirac mass in $\lambda$.
\\
\\This theorem of asymptotic $^*$-freeness has an impact in classical probability theory. Consider the case where the families $\mathbf X_N$ and $\mathbf Y_N$ consist only in one Hermitian matrix $X_N$ and $Y_N$ respectively. Then, the convergence in $^*$-distribution of $Y_N$ is the convergence in moments of its empirical eigenvalue distribution toward a probability measure $\pi$. Assume the technical conditions stated in Assumptions 2 and 3, Section \ref{sec:DistrTraff}. Then, Voiculescu's asymptotic freeness theorem gives in particular a characterization of the limiting empirical eigenvalues distribution of the sum $Y_N + X_N$. Following free probability terminology, it is called the {\bf free convolution} of $\pi$ with the semicircle distribution $\sigma_a$, namely the probability measure
	\eq
		\textrm d \sigma_a(t) = \frac 1 {2\pi \sqrt a} \sqrt{ r^2 - t^2 } \mathbf 1_{ |t|^2\leq r^2} \textrm d t, \ \ r=2\sqrt{a} = \Nlim 2 \big( \esp[ N X_N(1,2)^2] \big)^{\frac 1 2}.
	\qe
\noindent In this article we investigate the question of the convergence of $(\mathbf X_N, \mathbf Y_N)$ where Wigner matrices are replaced by matrices of a larger class, with the same structure of independence of its entries.

\begin{Def}[Heavy Wigner matrices]~\label{Def:HeavyWignerMatrices}
\\A random matrix $X_N$ is an $N$ by $N$ heavy Wigner matrix whenever:
\begin{enumerate}
		\item Almost surely, $X_N$ is Hermitian, i.e. $X_N=X_N^*$,
		\item the sub-diagonal entries of $X_N$ are independent and centered,
		\item the diagonal entries of $M_N = \sqrt N X_N$ are distributed according to a measure $\nu_N$ on $\mathbb R$,
		\item the strictly sub-diagonal entries of $M_N$ are distributed according to a measure $\mu_N$ on $\mathbb C$, invariant by complex conjugacy,
		\item $\mu_N$ and $\nu_N$ possess all their moments and for any $k\geq 1$
		\eqa
			\label{Assum:MomentMu}	\frac{ \int |z|^{2k} \textrm d\mu_N(z)}{N^{k-1}} & \limN & a_k,\\
			\label{Assum:MomentNu}	\frac{ \int t^{2k} \textrm d\nu_N(t)}{N^{k-1}} & = & O(1).
		\qea
\end{enumerate}
\end{Def}

\noindent The sequence $(a_k)_{k\geq 1}$ is called the {\bf parameter} of $X_N$. Hence, a Wigner matrix is a heavy Wigner matrix such that, with the notation above, the measures $\nu_N$ and $\mu_N$ do not depend on $N$. In that case, $a_k$ is zero for any $k\geq 2$. Such a parameter is said to be trivial in the following.
\\
\\This matrix model has been introduced independently by two authors. Zakharevich \cite{ZAK} has shown that the empirical eigenvalues distribution of a heavy Wigner matrix converges as $N$ goes to infinity. She has proved that the limiting distribution depends only on the parameter of the matrix. It consists in the semicircular distribution of radius $\sqrt {a_1}$ if the parameter is trivial, and has unbounded support otherwise. Zakharevich has given a combinatorial formula for the moments of this limiting distribution, based on the enumeration of certain rooted trees and she has proved that these moments characterize the measure when $a_k = O(\alpha^k)$ for some $\alpha>0$. Furthermore, Ryan \cite{RYA} has proved that a family $\mathbf X_N$ of independent heavy Wigner matrices has a limiting $^*$-distribution. He has given a combinatorial formula for it, which involves partition generalizing the classical approach for large Wigner matrices based on non crossing pair partitions (see \cite{NS}). In particular, he has proved that Voiculescu's rule of $^*$-freeness does not govern the limiting $^*$-distribution of $\mathbf X_N$ as soon as at least two matrices of the family have a non trivial parameter. Motivated by question from free probability, Benaych-Georges and Cabanal Duvillard \cite{BGCD12} have shown the convergence of the empirical eigenvalues distribution for the generalized Gram matrix $H_{N,M} =X_{N,M} X_{N,M}^*$, where $X_{N,M}$ is an $N$ by $M$ matrix such that:
\begin{itemize}
	\item the ratio $\frac N M$ converges to a positive constant
	\item the entires of  $\sqrt M X_{N,M}$ are independent and identically distributed according to a probability measure $\mu_N$ on $\mathbb C$ whose moments satisfies Ryan-Zakharevich's condition \eqref{Assum:MomentMu}.
\end{itemize}
In this article, we consider a family $\mathbf X_N$ of $N$ by $N$ independent heavy Wigner matrices and a family $\mathbf Y_N$ of $N$ by $N$ matrices, possibly random but independent of $\mathbf X_N$. We characterize the possible limiting $^*$-distribution of $(\mathbf X_N, \mathbf Y_N)$ under suitable assumptions on $\mathbf Y_N$. The most meaningful phenomenon that arises is that {\bf the limiting $^*$-distribution of $(\mathbf X_N, \mathbf Y_N)$ depends on much more than the limiting $^*$-distribution of $\mathbf Y_N$}. We use the notions of distributions of traffics and their free product to specify asymptotic statistics on $\mathbf Y_N$ and then characterizes the $^*$-distribution of $(\mathbf X_N, \mathbf Y_N)$.
\\
\\In particular, if $X_N$ is a heavy Wigner matrix and $Y_N$ a random Hermitian matrix, independent of $X_N$ having a limiting empirical eigenvalues distribution and uniformly bounded in operator norm, the problem of characterizing the limiting eigenvalues distribution of $Y_N + X_N$ is ill-posed. Up to a subsequence, a limiting eigenvalues distribution exists but there can exist many possible limits. For instance, if $Y_N$ is a Wigner matrix independent of $X_N$, then the limiting eigenvalues distribution of $Y_N + X_N$ is the free convolution of Zakharevich's distribution with a semicircular distribution. If $Y_N$ is an arbitrary matrix with fixed limiting eigenvalues distribution (say diagonal and random), then the one of $X_N + Y_N$ depends on the whole distribution of traffics of $Y_N$.

\subsection{Examples of models}

\noindent We point out how the study of heavy Wigner matrices is interesting since this model is related to classical random matrices.

\subsubsection{Matrices with truncated heavy tailed entries}\label{sec:DefLevy}

\noindent We say that a law of a random variable $x$ belongs to the domain of attraction of an $\alpha$ stable law if there exists a function $L: \mathbb R \to \mathbb R$ slowly varying such that
		$$\mathbb P\big( | x | \geq u \big ) = \frac {L(u)}{u^\alpha}, \forall u\in \mathbb R, \ \alpha \in ]0,2[.$$

\noindent A L\'evy matrix $X_N$ with parameter $\alpha$ in $]0,2[$ is a random Hermitian matrix such that: for any $i,j=1\etc N$,
		$$X_N^{(\alpha)}(i,j) = \frac {x_{i,j}}{\sigma_N},$$
where the random variables $(x_{i,j})_{1\leq i\leq j \leq N}$ are independent, identically distributed according to a law that belongs to the domain of attraction of an $\alpha$ stable law for an $\alpha$ in $]0,2[$ and
		$$\sigma_N = \inf \Big \{ \ u \in \mathbb R^+ \ \Big | \ \mathbb P\big( | x_{1,1} | \geq u \big) \leq \frac 1 N \ \Big\}.$$
By the formula for truncated moments of heavy-tailed random variables \cite[Formula (15)]{BG}, for any $B>0$, the random matrix $X_N^B$ whose entries are given by: for any $i,j=1\etc N$,
		$$X_N^{(\alpha, B)}(i,j) =    \frac {x_{i,j}}{B\sigma_N} \mathbf 1_{|x_{i,j}|\leq B \sigma_N},$$
is a heavy Wigner matrix with parameter 
	\eqa
		\Big(  \frac { \alpha}{(2k-\alpha)B^\alpha} \Big)_{k\geq 1}.
	\qea

\noindent The first mathematical results on L\'evy matrices are due to Ben Arous and Guionnet \cite{BG} in 2007, who have shown the convergence of the eigenvalues distribution of a single L\'evy matrix. Belinschi, Dembo et Guionnet \cite{BDG} has studied the limiting spectrum of the sum of a L\'evy matrix and a diagonal matrix, and of a band L\'evy matrices. Moreover, Bordenave, Caputo and Chafa\"i \cite{BCC2} has given an other characterization of the limiting distribution of a L\'evy matrix than one in \cite{BG}. It is based on the local operator convergence of a L\'evy matrix to a certain graph whose entries are labelled by random variables, the Poissonian weighted infinite tree. This is reminiscent with the traffic based approach of this paper for heavy Wigner matrices.
	
\subsubsection{Adjacency matrices of graphs and networks} \label{sec:ExGraphs}

\noindent Let $G_N = (V,E)$ be a simple undirected random graph with $N$ vertices labelled $\{1\etc N\}$. The adjacency matrix of $G_N$ is the matrix 
	\eq
		A_N = \big(\mathbf 1_{\{m,n\} \in  E}\big)_{m,n=1\etc N}.
	\qe

\noindent By simple, we mean without loops nor edges, so that $A_N$ is a symmetric matrix with entries in $\{0,1\}$ and its diagonal elements are zero.
\\
\\{\bf Erd\"os-Renyi sparse graphs:} The only random graph, invariant by re-indexation of its vertices and whose adjacency matrix has independent entries is the Erd\"os-Renyi random graph: it is a random undirected graph with vertices $\{1 \etc N\}$, such that two distinct vertices are linked by an edge with probability $p_N$, independently of the others edges.
\\
\\We consider $p_N$ of the form $\frac \alpha N$ for a fixed $\alpha>0$ and $N$ large, and denote $G_N^{(\alpha)}$ a random Erd\"os-Renyi with that parameter. This is called the {\bf sparse regime}. 
\\
\\Consider the adjacency matrix $A_N^{(\alpha)}$ of $G_N^{(\alpha)}$, and set
	\eq
			X^{(\alpha)}_N =   A^{(\alpha)}_N -  \frac \alpha N J_N  ,
	\qe
where $J_N$ is the $N$ by $N$ matrix with zero on the diagonal and one elsewhere ($X_N$ is made in such a way its entries are centered). Then, $X^{(\alpha)}_N$ is a heavy Wigner matrix with parameter $(\alpha)_{k\geq1}$. It can be observed that $A^{(\alpha)}_N$ and $X^{(\alpha)}_N$ have the same limiting eigenvalues distribution. More generally, one can replace heavy Wigner matrices by adjacency matrices of sparse Erd\"os-Renyi graphs in the results of this article.
\\
\\{\bf Network version:} With $A^{(\alpha)}_N$ as above, denote by $X^{(\alpha)}_N$ the Hermitian random matrix obtained from $A^{(\alpha)}_N$ by replacing its non zero entries by independent, identically distributed random variables. More formally
	\eq
			X^{(\alpha)}_N = A^{(\alpha)}_N \circ M_N,
	\qe 
where $\circ$ denotes the Hadamard (entry-wise) product, and $M_N$ is a random Hermitian matrix, independent of $A^{(\alpha)}_N$, with independent identically distributed entries (up to the Hermitian condition). Assume that the common law of the entries of $M_N$ are distributed according to a measure $\mu$ centered and which possesses all its moments. Then, $X^{(\alpha)}_N$ is a heavy Wigner matrix with parameter 
	\eqa
		\Big(\alpha \times \int |z|^{2k} d\mu(z) \Big)_{k\geq1}.
	\qea

\noindent The spectral theory of weighted graphs is a fields that has been intensively developed in mathematics \cite{CDS95,CHU97}. The analysis of the spectrum of random sparse matrices has been tackled by many authors as Khorunzhy, Shcherbina and Vengerovsky \cite{KSV04}, Ding and Jiang \cite{DJ10} and Shcherbina and Tirozzi \cite{ST10}.
\\
\\Remark that, more generally, if $X_N$ is a heavy Wigner matrix with parameter $(a_k)_{k\geq 1}$ and $M_N$ is as above, independent of $X_N$, then $X_N \circ M_N$ is a heavy Wigner matrix with parameter 
	\eqa
		\big(a _k \times \int |z|^{2k} d\mu(z) \big)_{k\geq1}.
	\qea

\section{Statement of the main results}

\subsection{The limiting distribution of independent heavy Wigner and arbitrary random matrices}

\noindent {\bf Notations:} Let $\mathbf Y_N = (Y_1\toN \etc Y_q\toN)$ of $N$ by $N$ random matrices. Let $U_N$ be a uniform permutation matrix, independent of $\mathbf Y_N$ and denote $Z_j = U_N Y_j\toN U_N^*$ for any $j=1\etc p$. Following Lovasz \cite{Lov09}, we call injective density of $T$ in $\mathbf Y_N$ the quantity
		\eqa \label{eq:InjDens}
			\delta_N^0\big[ T(\mathbf Y_N) \big] =   \esp\bigg[  \prod_{j=1}^K  Z_{\gamma(j)}^{\eps(j)}(k_j,l_j) \ \bigg | \ \mathbf Y_N \bigg],
		\qea
	where $T = \Big\{ \big( k_j,l_j,\gamma(j),\eps(j) \big) \in \{1,2 , \dots \}^2 \times \{1\etc p\} \times \{1,*\}  \Big | j=1\etc K \Big\}$ is a set of indices (fixed as $N$ go to infinity), $M(k,l)$ denotes the $k,l$ entry of a matrix $M$, $M^*$ its conjugate transpose, and $\esp[ \ \cdot \ | \mathbf Y_N ]$ means the conditional expectation with respect to $\mathbf Y_N$.
\\
\\Such a set of indices $T$ is seen as a labelled graph: with the notations above, the set of vertices is $V = \{k_j,l_j \ | \ j=1 \etc K\}$ and the multi-set of edges is $E = \multsetl (k_j,l_j) \ | \ j=1\etc K \multsetr$. Each edge $e$ has a label $x_{\gamma(e)}^{\eps(e)}$. If $T$ is connected, we call it a $^*$-test graph. It is called a cyclic $^*$-graph when the integers $(k_j,l_j)$ can be taken of the form $(k_j,k_{j+1})$, $j=1\etc K$ with $k_{K+1} = k_1$.
\\
\\We state three assumptions on a family $\mathbf Y_N$ of random matrices. 
\\
\\{\bf Assumption 1:} One assumes its {\bf convergence in distribution of traffics} on $\mathcal G_{cyc} \langle \mathbf y, \mathbf y^* \rangle$ (see Section \ref{sec:DistrTraff}), that is, the convergence of the quantities 
	\eq
		 \esp \Big[ \tau^0_N\big[ T(\mathbf Y_N) \big] \Big]:= \esp \bigg[ \frac {(N-1)!}{(N-|V|)!} \delta_N^0\big[ T(\mathbf Y_N) \big] \bigg],
	\qe
for any cyclic $^*$-graph  $T$ with $|V|$ vertices. This mode of convergence extends the convergence in $^*$-distribution and the weak local convergence of graphs \cite{MAL12}.
\\
\\\noindent {\bf Assumption 2:} One assumes a {\bf concentration} hypothesis in this setting: for cyclic $^*$-test graphs $T_1 \etc T_n$,
	\eq
		 \esp \Big[ \tau^0_N\big[ T_1(\mathbf Y_N) \big] \dots \tau^0_N\big[ T_n(\mathbf Y_N) \big] \Big] - \esp \Big[ \tau^0_N\big[ T_1(\mathbf Y_N) \big] \Big] \dots \esp \Big[ \tau^0_N\big[ T_n(\mathbf Y_N) \big] \Big] \limN 0.
	\qe

\noindent {\bf Assumption 3:} One assumes a condition which implies {\bf tightness} in the setting of the main theorem of this paper (Theorem \ref{MainTh}, Section \ref{sec:AsymFreeTh}): for any (non cyclic) $^*$-test graphs $T_1 \etc T_n$,
	\eq
		 \esp \Big[ \tau^0_N\big[ T_1(\mathbf Y_N) \big] \dots \tau^0_N\big[ T_n(\mathbf Y_N) \big] \Big]  = O\Big( N^{ \sum_{i=1}^n(\mathfrak r(T_i)/2-1)} \Big)
	\qe

\noindent where $\mathfrak r(T_i)$ is a positive integer defined by Mingo and Speicher \cite{MS09}, called the number of leaves of its tree of two-edge connected components. It depends on the geometry of $T_i$, see Section \ref{sec:MS}.
\\
\\\noindent We can state the main theorem of this article, where the interest in that it gives an explicit description of the limiting objects (by the traffic freeness, whose definition is recalled in Section \ref{sec:DefFree}).

\begin{Th}[The traffic-asymptotic freeness of $X_1 \etc X_p, \mathbf Y_N$]~\label{MainTh}
\\Let $\mathbf X_N$ be a family independent heavy Wigner matrices, independent of an arbitrary family $\mathbf Y_N$ of random matrices satisfying the three assumptions above. Then, the joint family $(\mathbf X_N, \mathbf Y_N)$ has a limiting $^*$-distribution. Its limit is characterized by the notion of traffic-freeness in the sense of \cite{MAL12}, and depends only on the parameters of the heavy Wigner matrices and of the distribution of traffics of $\mathbf Y_N$ on $\Gcyc$.
\end{Th}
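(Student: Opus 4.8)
The plan is to reduce the statement to a moment computation and then to a combinatorial identification of the limit with the traffic-free product. First I would expand a general $^*$-monomial in $(\mathbf X_N, \mathbf Y_N)$ as an alternating word $W = P_1(X_{i_1}) Q_1(\mathbf Y_N) P_2(X_{i_2}) Q_2(\mathbf Y_N) \cdots$, write $\frac 1N \Tr[W(\mathbf X_N,\mathbf Y_N)]$ as a sum over index tuples, and take the expectation. Using independence of $\mathbf X_N$ and $\mathbf Y_N$, the expectation factors conditionally: the $\mathbf X_N$-part produces a sum over pairings/partitions of the heavy-Wigner edges (following Ryan--Zakharevich, one gets partitions, not merely pair partitions, because of the non-trivial higher parameters $a_k$), weighted by the parameters $(a_k)_k$, while the $\mathbf Y_N$-part produces exactly the injective densities $\delta_N^0[T(\mathbf Y_N)]$ of the $^*$-test graphs obtained by contracting the index structure. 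The first key step is thus a graph-theoretic bookkeeping lemma: organizing the index tuples by the partition they induce on vertices, each term becomes $N^{-1} \times N^{\#\text{blocks}} \times (\text{heavy-Wigner weight}) \times \esp[\tau_N^0(\text{contracted graph})]$, up to the $\frac{(N-1)!}{(N-|V|)!}$ normalization built into $\tau_N^0$.

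Next I would run the genus-type / degree count. For the pure-Wigner case only planar (non-crossing) configurations survive; here, by Zakharevich's tree enumeration and Mingo--Speicher's notion $\mathfrak r(T)$, the surviving configurations are those where the heavy-Wigner partition glues the $\mathbf Y_N$-test graphs into a tree of two-edge-connected components, and Assumption 3 is exactly what guarantees that the non-surviving ($\mathfrak r/2 - 1 < 0$ in aggregate) terms vanish and that the family is tight. Assumption 2 (concentration) then lets me replace $\esp[\prod_i \tau_N^0(T_i)]$ by $\prod_i \esp[\tau_N^0(T_i)]$ in the limit, so that only the convergence of single expectations $\esp[\tau_N^0(T)]$ — i.e. Assumption 1, convergence in distribution of traffics on $\Gcyc$ — is needed. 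At this point the limit of $\Phi_{(\mathbf X_N,\mathbf Y_N)}(W)$ exists and is given by an explicit sum over (trees of) partitions, with the heavy-Wigner parameters and the limiting traffic distribution $\tau$ of $\mathbf Y_N$ as the only inputs.

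Finally I would identify this combinatorial formula with the traffic-free product of the limiting traffic distribution of $\mathbf Y_N$ and the canonical traffic distribution of free heavy Wigner elements. Concretely: (i) recall from \cite{MAL12} the injective-trace description of the traffic-free product (the "tree of components" gluing rule); (ii) show that a family of independent heavy Wigner matrices, on its own, converges in distribution of traffics to an explicit limit — this is essentially Ryan's and Zakharevich's computations recast in the traffic formalism, where the limiting traffic state is supported on graphs that are trees with $k$-fold "fat" edges weighted by $a_k$; (iii) check that the combinatorial sum obtained in the previous paragraph is term-by-term the injective-trace formula for the traffic-free product of these two traffic distributions, which is precisely the assertion of traffic-asymptotic freeness. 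The dependence statement ("only on the parameters and on the distribution of traffics on $\Gcyc$") is then immediate, since those are the only data entering the formula, and one exhibits two matrices $\mathbf Y_N$ with the same $^*$-distribution but different traffic distributions yielding different limits.

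The main obstacle I expect is step (i)--(ii) of the last paragraph combined with the degree count: precisely matching the partition/tree combinatorics coming from the heavy-Wigner moment expansion (the higher cumulants $a_k$ for $k \geq 2$ force genuinely new gluings beyond non-crossing pair partitions) with the two-edge-connected-component tree structure that defines both $\mathfrak r(T)$ and the traffic-free product. Keeping careful track of which index identifications are forced, which are free (contributing factors of $N$), and which are forbidden in the limit — across an alternating word of arbitrary length and with arbitrary $\mathbf Y_N$-decorations on the edges — is where the bulk of the technical work lies; Assumptions 2 and 3 are tailored to make exactly this accounting close.
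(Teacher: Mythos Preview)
Your proposal is correct in outline and identifies all the right ingredients, but the route differs from the paper's in organization. The paper proceeds modularly: it first proves a \emph{general} traffic-asymptotic freeness theorem (Theorem~\ref{Th:AsympFree}) for any independent, permutation-invariant families each satisfying Assumptions~1--3, using a factorization identity from \cite{MAL12} (formula~\eqref{FirstEq}) that expresses $\esp[\tau_N^0[T]]$ as a product over the monochromatic components $T_{i,k}$ times an explicit power of $N$; it then shows that when $T$ is not a free product this power is at most $N^{-1}$, via the inequality $\rho \geq 1 + \sum_{i,k}(\mathfrak r(T_{i,k})-2)/2$ relating the number of cycles in $G_{red}(T)$ to the leaf counts $\mathfrak r(T_{i,k})$. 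Separately (Section~\ref{sec:DistributionHeavyWigner}) it verifies that a single heavy Wigner matrix satisfies Assumptions~1--3 with the explicit fat-tree limit of Proposition~\ref{Prop:DistrHeavy}. Theorem~\ref{MainTh} is then an immediate corollary. Your direct approach --- expand an alternating word, factor by independence, run the degree count, match with the traffic-free product --- amounts to redoing both halves simultaneously for the specific pairing (heavy Wigner, $\mathbf Y_N$). This works, but you should be aware that (i) the ``conditional factorization'' you invoke is precisely the nontrivial identity~\eqref{FirstEq}, which relies on permutation invariance and is not just independence; and (ii) your vanishing criterion ``$\mathfrak r/2-1<0$ in aggregate'' is not quite the right formulation, since $\mathfrak r(T)\geq 2$ always --- the actual mechanism is the competition between the normalization $N^{-\rho}$ and the growth $N^{\sum(\mathfrak r(T_{i,k})/2-1)}$, and the combinatorial inequality above is what makes this close. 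The paper's modular route buys reusability (the same Theorem~\ref{Th:AsympFree} yields the covariance-matrix and L\'evy corollaries with no extra work) and isolates the heavy-Wigner computation cleanly; your direct route would be somewhat more self-contained but messier, and you would still need the fat-tree computation as a lemma to identify the limit.
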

	
	\noindent Theorem \ref{MainTh} is proved in Section \ref{sec:AsymFreeTh}. In the case where the matrices of $\mathbf X_N$ are classical Wigner matrices, we can replace Assumption 1 by the convergence in $^*$-distribution of $\mathbf Y_N$ (see Corollary \ref{Cor:WignerCase}). To the author's knowledge, this improves the usual asymptotic freeness theorem for independent Wigner and arbitrary random matrices.
\\
\\We give examples of families of matrices which satisfy assumptions of Theorem \ref{MainTh}.
\begin{Prop}[Examples of matrix models]\label{Prop:Example}~
	\begin{enumerate}
		\item {\bf Matrices uniformly bounded in operator:} Let $\mathbf Y_N$ be a family of random matrices whose operator norm is almost surely uniformly bounded. Then, up to a subsequence, $\mathbf Y_N$ satisfies Assumption 1 and it always satisfies Assumption 3.
		\item {\bf Diagonal matrices:} Let $\mathbf D_N$ be a family of random diagonal matrices having a limiting $^*$-distribution and satisfying: for any $^*$-polynomials $P_1 \etc P_n$,
			$$\esp\Big[ \frac 1 N \Tr \big[ P_1 (\mathbf D_N) \big]  \dots  \frac 1 N \Tr \big[ P_n (\mathbf D_N) \big] \Big] - \esp\Big[ \frac 1 N \Tr \big[ P_1 (\mathbf D_N) \big] \Big]   \dots \esp\Big[ \frac 1 N \Tr \big[ P_n (\mathbf D_N) \big] \Big] \limN 0.$$
		\noindent Then, $\mathbf D_N$ satisfies the three assumptions of Theorem \ref{MainTh}.
		\item {\bf Adjacency matrices of graphs:} Let $\mathbf Y_N$ be the family of adjacency matrices of a random colored graph $\mathbf G_N$, with uniformly bounded degree, that converges in the sense weak local convergence to a random rooted colored graph. Then, $\mathbf Y_N$ satisfies Assumption 1. Moreover, if it satisfies Assumption 2, then it satisfies Assumption 3.
	 \end{enumerate} 
\end{Prop}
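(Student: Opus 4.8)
The plan is to treat the three items in turn, in each case reducing the three assumptions to a known sharp estimate on injective traces of $^*$-test graphs together with the hypotheses already built into the statement. Throughout I use the identity $\tau^0_N\big[T(\mathbf Y_N)\big] = \frac1N \sum_{\phi} \prod_{j=1}^K Y_{\gamma(j)}^{\eps(j)}\big(\phi(k_j),\phi(l_j)\big)$, the sum being over the injective maps $\phi$ from the vertex set $V$ of $T$ into $\{1\etc N\}$; this follows from \eqref{eq:InjDens} because conjugation by the uniform permutation matrix $U_N$ relabels the indices uniformly, so the conditional expectation there equals $\frac{(N-|V|)!}{N!}$ times that injective sum, which the prefactor $\frac{(N-1)!}{(N-|V|)!}$ in the definition of $\tau^0_N$ turns into $\frac1N$ times it.

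For item~(1), Assumption~3 is exactly the bound of Mingo and Speicher \cite{MS09}: if the matrices of $\mathbf Y_N$ have operator norm at most $C$, then $\big|\tau^0_N[T(\mathbf Y_N)]\big| \leq C' N^{\mathfrak r(T)/2-1}$ for every $^*$-test graph $T$, with $C'$ depending only on $C$ and the number of edges of $T$; since this bound is pathwise, multiplying it over $T_1 \etc T_n$ and taking expectation gives the stated inequality, with no convergence hypothesis needed. For Assumption~1, I would first observe that when $T$ is \emph{cyclic} the quantity $\esp[\tau^0_N[T(\mathbf Y_N)]]$ is bounded uniformly in $N$: by inclusion--exclusion the injective sum above differs from $\frac1N\Tr$ of the corresponding $^*$-monomial in $\mathbf Y_N$ (bounded by a power of $C$) only by contributions indexed by cyclic graphs on strictly fewer vertices, so an induction on $|V|$ bounds it in terms of $C$ and $K$ alone. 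The set of cyclic $^*$-graphs up to relabelling of vertices being countable, a diagonal extraction produces a subsequence along which $\esp[\tau^0_N[T(\mathbf Y_N)]]$ converges for every cyclic $T$; that is Assumption~1 along that subsequence, the limit being automatically a distribution of traffics on $\Gcyc$ as a pointwise limit of such.

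For item~(2), the decisive remark is that a diagonal matrix $D$ satisfies $D(k,l)=0$ for $k\neq l$, so in the injective-sum formula the product vanishes for every injective $\phi$ as soon as $T$ carries an edge between two distinct vertices; since $T$ is connected this forces $|V|=1$, i.e. $T$ is a bouquet of loops at a single vertex, and then $\tau^0_N[T(\mathbf D_N)] = \frac1N\Tr[w(\mathbf D_N)]$ for the $^*$-monomial $w$ read off those loops. Such a $T$ is moreover cyclic, so on $\Gcyc$ the distribution of traffics of $\mathbf D_N$ carries exactly the same information as its $^*$-distribution; hence Assumption~1 is equivalent to the assumed convergence in $^*$-distribution, Assumption~2 is the assumed concentration of $\frac1N\Tr[P(\mathbf D_N)]$ specialized to $^*$-monomials, and in Assumption~3 every $T_i$, being connected and non-cyclic, has $|V|\geq 2$ and therefore $\tau^0_N[T_i(\mathbf D_N)]=0$, so the left-hand side vanishes identically.

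For item~(3), a uniform degree bound $D$ forces $\|\mathbf Y_N\|\leq D$ almost surely (for a symmetric $0$--$1$ matrix the operator norm is at most the maximal degree), so item~(1) already supplies Assumption~3, while the weak local convergence of $\mathbf G_N$ promotes Assumption~1 from its subsequential form to genuine convergence: for a connected colored pattern $T$ the quantity $\esp[\tau^0_N[T(\mathbf Y_N)]]$ is, up to $o(1)$, the expected number of embeddings of $T$ into $\mathbf G_N$ around a uniformly chosen root, a local statistic, and the dictionary between convergence of distributions of traffics on $\Gcyc$ and Benjamini--Schramm convergence is the one established in \cite{MAL12}. The conditional phrasing ``if it satisfies Assumption~2, then it satisfies Assumption~3'' is what one would need if the degrees were only tight rather than bounded (as for sparse Erd\"os-Renyi graphs, whose adjacency matrix has unbounded operator norm): item~(1) is then unavailable and the estimate must be obtained by running the Mingo--Speicher decomposition into two-edge-connected components, bounding the first-moment contributions by the local limit and the genuinely covariance-type contributions by Assumption~2. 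I expect this last reconciliation --- matching the local-limit control of first moments with the product structure of Assumption~3 in the absence of an operator-norm bound --- to be the only real difficulty; items~(1) and (2) are essentially bookkeeping around the estimate of \cite{MS09}.
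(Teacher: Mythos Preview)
Your arguments for items~(1) and~(2) are correct and match the paper's own proof almost verbatim: the paper simply invokes the Mingo--Speicher estimate (Theorem~\ref{Th:MS}) for~(1), and for~(2) observes that for diagonal matrices every trace of a $^*$-test graph reduces to a normalized trace of a $^*$-polynomial. Your formulation via the injective trace $\tau^0_N$ (vanishing when $|V|\geq 2$) is the dual of the paper's formulation via $\tau_N$ (collapsing to a single-vertex bouquet); both are equivalent by Proposition~\ref{prop:InjectiveTrace}. One small imprecision: in Assumption~\ref{TechAss} the parenthetical ``(non cyclic)'' means \emph{not necessarily} cyclic, not \emph{non}-cyclic, but your argument covers the cyclic case as well since bouquets have $\mathfrak r=2$.

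For item~(3) your route genuinely differs from the paper's. You use that bounded degree forces bounded operator norm and then invoke item~(1), which in fact yields Assumption~\ref{TechAss} \emph{unconditionally}, without appealing to Assumption~\ref{Ass2}. The paper instead exploits that adjacency matrices have $\{0,1\}$ entries: since $A(i,j)^2 = A(i,j)$, any edge can be duplicated without changing the product, so every $^*$-test graph $T$ can be replaced by a cyclic $\tilde T$ with $\tau_N[T(\mathbf G_N)] = \tau_N[\tilde T(\mathbf G_N)]$; Assumptions~\ref{ConvTraff} and~\ref{Ass2} then give boundedness of products of such traces, hence Assumption~\ref{TechAss}. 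Your approach is shorter and proves more than stated; the paper's approach explains the conditional phrasing and would survive in settings where one loses the operator-norm bound but retains the $\{0,1\}$ structure. Your closing speculation about tight (rather than bounded) degrees and a Mingo--Speicher decomposition is not what the paper does, so you can drop it.
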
	 
	 \noindent As we can use this theorem for $\mathbf Y_N$ containing diagonal matrices of projection, we obtain an analogue of Theorem \ref{MainTh} for covariance matrices (see Proposition \ref{Prop:HeavyCov}). We also obtain the weak convergence of the empirical eigenvalues distribution of Hermitian matrices in independent L\'evy and random matrices (see Proposition \ref{Prop:LevyMatrices}).

\subsection{The limiting distribution of independent, permutation invariant random matrices}

\noindent Theorem \ref{MainTh} is a consequence of a more general theorem, namely the traffic-asymptotic freeness of random matrices on cyclic $^*$-test graphs stated below, which specify a result of \cite{MAL12} in the settings of the three assumptions stated above. It is proved in Section \ref{sec:AsymFreeTh}, once the setting of traffics and their free product have been reminded in Sections \ref{sec:DistrTraff} and \ref{sec:DefFree}.
	
	\begin{Th}[The asymptotic traffic-asymptotic freeness of permutation invariant, independent families of matrices]~
	\\Let $\mathbf Z_N^{(1)} \etc  \mathbf Z_N^{(p)}$ be families of random $N$ by $N$ matrices. Assume that 
	\begin{itemize}
		\item $\mathbf Z_N^{(1)} \etc  \mathbf Z_N^{(p)}$ are {\bf independent},
		\item for any $j=1\etc p$, $\mathbf Z_N^{(j)}$ is {\bf permutation invariant}, except possibly for one $j$ in $\{1\etc p\}$,
		\item $\mathbf Z_N^{(j)}$ satisfies the three assumptions of Theorem [1.2].
	\end{itemize}
	Then, the joint family $(\mathbf Z_N^{(1)} \etc  \mathbf Z_N^{(p)})$ has a limiting distribution of traffics on $\mathbf G_{cyc}\langle \mathbf z, \mathbf z^* \rangle$, which is the {\bf traffic-free product} of the limiting distributions of $\mathbf Z_N^{(1)} \etc  \mathbf Z_N^{(p)}$ on $\Gcyc$. In particular, it has a limiting $^*$-distribution which depends only on the limiting distribution of traffics of $\mathbf Z_N^{(1)} \etc \mathbf Z_N^{(p)}$ separately.
	\end{Th}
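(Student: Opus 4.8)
The plan is to reduce the statement to an asymptotic analysis of graph sums and then to carry out a decoupling driven by independence and by permutation invariance. Recall (Section \ref{sec:DistrTraff}) that the distribution of traffics on $\Gcyc$ of a family $\mathbf A_N$ is the collection of limits of $\esp[\tau^0_N[T(\mathbf A_N)]]$ over cyclic $^*$-test graphs $T$, and that the traffic-free product of limiting distributions $\tau^{(1)},\dots,\tau^{(p)}$ is prescribed (Section \ref{sec:DefFree}) by an explicit finite formula expressing, on a colored test graph $T$, the value $\tau^0[T]$ in terms of the $\tau^{(c)}$ evaluated on the monochromatic pieces of quotients of $T$. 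So it suffices to prove that, for every cyclic $^*$-test graph $T$ whose edges are colored by $\{1,\dots,p\}$, the number $\esp\big[\tau^0_N[T(\mathbf Z_N^{(1)},\dots,\mathbf Z_N^{(p)})]\big]$ converges to the value that formula assigns; the assertion on the limiting $^*$-distribution will then follow from the structure of the traffic-free product, since $\Phi_{(\mathbf Z_N^{(1)},\dots,\mathbf Z_N^{(p)})}$ involves only cyclic test graphs.

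Next I would fix such a $T$, with monochromatic subgraphs $T_1,\dots,T_p$ (not necessarily connected, and with generally non-cyclic components since $T$ itself is cyclic), and unfold $\esp[\tau^0_N[T]]$ as $\frac1N$ times the sum over injective vertex labelings $\phi$ of $\esp\big[\prod_e Z^{(c(e))}_{\gamma(e)}\big(\phi(k_e),\phi(l_e)\big)^{\eps(e)}\big]$. Because the families $\mathbf Z_N^{(1)},\dots,\mathbf Z_N^{(p)}$ are independent, for each fixed $\phi$ this expectation factors as $\prod_c\esp\big[\prod_{e\in T_c}Z^{(c)}\big(\phi(k_e),\phi(l_e)\big)\big]$. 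For a permutation-invariant color $c$, permutation invariance forces the $c$-th factor to depend only on the (here trivial) coincidence pattern of $\phi$ on the vertices of $T_c$, hence to be a constant $\theta_N(T_c)$; and the concentration hypothesis (Assumption 2) makes this constant asymptotically factorize over the connected components $S$ of $T_c$, as $\prod_S\esp\big[\delta^0_N[S(\mathbf Z_N^{(c)})]\big]$, each factor then controlled through Assumptions 1 and 3. The at most one color $c_0$ not assumed permutation invariant is handled by performing the sum over $\phi$ restricted to the vertices of $T_{c_0}$ last, which reconstitutes $\esp[\tau^0_N]$ of the corresponding sub-test-graph while the other colors contribute the constants above; two non-permutation-invariant colors could not be decoupled this way, because their subgraphs share vertices, which is exactly why a single exception is allowed.

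Finally I would reorganize the sum over $\phi$ according to the partition $\pi$ of the vertices of $T$ that records which vertices $\phi$ identifies across distinct monochromatic components. The term with $\pi$ discrete is dominant and, after the factorizations above, reassembles exactly into the traffic-free product formula of Section \ref{sec:DefFree} --- here one uses that each monochromatic piece entering that formula becomes cyclic once its boundary vertices are identified, so that Assumption 1 governs its limit. Every coarser $\pi$ produces a strict quotient $T^\pi$ with strictly fewer vertices, and by the M\"obius relation between $\tau$ and $\tau^0$ together with the power counting of Assumption 3 (through Mingo--Speicher's invariant $\mathfrak r$) such a term is $O(1/N)$. Passing to the limit yields convergence and the asserted identification. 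The step I expect to be the main obstacle is precisely this matching and bookkeeping: getting the normalizations and combinatorial weights of the decoupled leading term to coincide with the intricate definition of the traffic-free product on a cyclic graph, and running the power counting uniformly --- this last point being where the tightness hypothesis (Assumption 3), rather than mere operator-norm control as in \cite{MAL12}, is genuinely needed, since it is what bounds the non-cyclic test graphs arising in the error terms.
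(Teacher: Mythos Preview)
Your overall strategy coincides with the paper's: factor the expectation using independence and permutation invariance, then control the resulting pieces through Assumptions~1--3. The first two paragraphs are essentially right and reproduce, informally, the factorization identity the paper imports from \cite[Proposition 12.3]{MAL12} (see equation~\eqref{FirstEq}).

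The third paragraph, however, is confused in a way that matters. Your ``partition~$\pi$'' recording which vertices $\phi$ identifies across monochromatic components does not make sense: $\phi$ is already injective on the full vertex set of $T$, so it identifies nothing, and there is only one $\pi$. More importantly, you have not isolated the dichotomy that drives the proof. The traffic-free product prescribes $\tau^0[T]=\prod_{\tilde T}\tau^0[\tilde T]$ when $T$ \emph{is} a free product in the sense of Definition~\ref{def:FreeProdGraphs}, and $\tau^0[T]=0$ otherwise. Your sketch treats only the first case (``each monochromatic piece \dots\ becomes cyclic'' is exactly the statement that $T$ is a free product) and never explains why the limit is $0$ when $G_{red}(T)$ has a cycle. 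In the paper this is the substantive step: after the factorization~\eqref{FirstEq}, the normalizing prefactor behaves like $N^{-\rho}$ with $\rho$ the number of independent cycles of $G_{red}(T)$, while Assumption~\ref{TechAss} bounds the remaining product by $N^{\sum_{i,k}(\mathfrak r(T_{i,k})/2-1)}$; since $T$ is cyclic, each non-cyclic component $T_{i,k}$ with $\mathfrak r(T_{i,k})$ leaves forces at least $(\mathfrak r(T_{i,k})-2)/2$ extra cycles in $G_{red}(T)$, yielding $\rho\geq 1+\sum_{i,k}(\mathfrak r(T_{i,k})-2)/2$ and hence $O(N^{-1})$. This comparison of $\rho$ against $\sum_{i,k}(\mathfrak r(T_{i,k})/2-1)$ is precisely the ``power counting'' you allude to, but it is a geometric argument on $G_{red}(T)$, not a M\"obius-type sum over partitions of $V(T)$. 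Replace your partition paragraph by this two-case analysis and the proof goes through.
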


\noindent We prove that a single heavy Wigner matrix satisfies these assumptions. Hence, consider independent heavy Wigner matrices $\mathbf X_N= (X_1\toN  \etc X_p\toN)$ and random matrices satisfying the three assumptions. We get that the families  $(X_1\toN) \etc (X_p\toN), \mathbf Y_N$ are asymptotic traffic-free (see Section \ref{sec:DefFree}) as $N$ goes to infinity. The rule of traffic-freeness gives an explicit way to compute $^*$-moments in $(\mathbf X_N, \mathbf Y_N)$. It is based on the computation of the limiting distribution of traffics given in the following proposition.

\begin{Prop}[The limiting distribution of traffics of a heavy Wigner matrix]\label{Prop:DistrHeavy}~
\\Let $X_N$ be a heavy Wigner matrix with parameter $(a_k)_{k\geq 1}$. Then, $X_N$ has a mean limiting distribution of traffics on $\mathcal G \langle x \rangle$ given by: for any cyclic $^*$-test graph $T$, 
	\eq
		\esp\Big[ \tau_N^0\big[T(X_N) \big] \Big]  \limN \left\{ \begin{array}{cc} \prod_{k\geq 1} a_k^{q_k} &  \textrm{ if } T \textrm{ is a fat tree of type } (q_k)_{k\geq 1},\\
		0 & \textrm{ otherwise.} \end{array} \right.
	\qe

\noindent A fat tree is a $^*$-test graph which becomes a tree if we forget the multiplicity and the orientation of the edges. A fat tree is of type $(q_k)_{k\geq 1}$ if it has $q_k$ undirected edges of multiplicity $2k$. See Figure \ref{fig: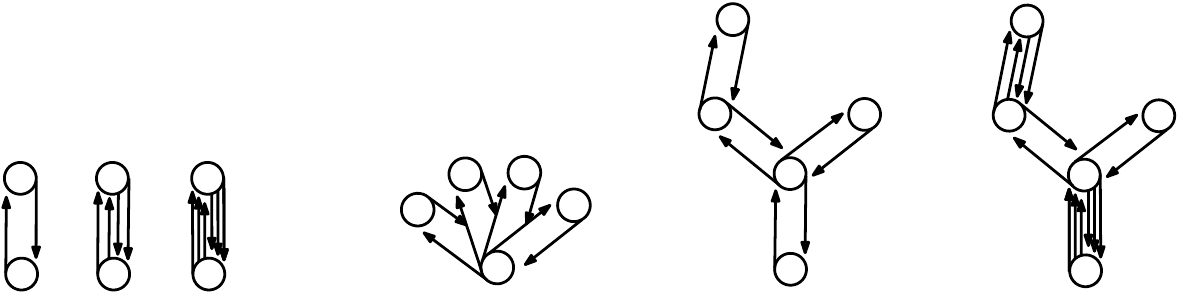}.
\end{Prop}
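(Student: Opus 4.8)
The plan is to compute $\esp\big[\tau_N^0[T(X_N)]\big]$ directly from the definition by expanding the conditional expectation over the randomness of $X_N$ and applying a moment-counting argument in the spirit of Zakharevich and Ryan. First I would recall that $\tau_N^0[T(X_N)] = \frac{(N-1)!}{(N-|V|)!}\,\delta_N^0[T(X_N)]$, and that the injective density is obtained by summing over injective labellings $\phi\colon V\to\{1,\dots,N\}$ of the product of entries $\prod_j X_N^{\eps(j)}(\phi(k_j),\phi(l_j))$, suitably normalized. Writing $X_N = \frac{1}{\sqrt N}M_N$ and separating the $|E|$ edges of the multigraph $T$ into classes according to which pair $\{\phi(k_j),\phi(l_j)\}$ they induce, independence of the sub-diagonal entries of $M_N$ forces each class to be visited an even number of times for a nonzero contribution (since the entries are centered), exactly as in the Wigner computation. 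An edge-class visited $2k$ times over a pair of distinct indices contributes a factor $\frac{1}{N^k}\int|z|^{2k}d\mu_N(z)$, and by \eqref{Assum:MomentMu} this is asymptotically $\frac{1}{N}\,a_k$; a class over a diagonal pair contributes $\frac{1}{N^k}\int t^{2k}d\nu_N(t) = O(N^{-1})$ by \eqref{Assum:MomentNu}. Here I would assume all relevant results already stated, in particular Ryan's and Zakharevich's formulas, to license the combinatorial bookkeeping.

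The key step is then the graph-theoretic bookkeeping for the power of $N$. Let the underlying simple graph of $T$ (forgetting multiplicity and orientation) have $v$ vertices and $e$ edges, and suppose the multi-edges fall into $e$ classes with multiplicities $2k_1,\dots,2k_e$ (odd multiplicities kill the term). The normalization $\frac{(N-1)!}{(N-|V|)!}$ contributes $N^{v-1}(1+o(1))$, the number of injective labellings with prescribed class-structure is $N^{v}(1+o(1))$ once we also track that vertices are the $v$ distinct values — wait, one must be careful: the injective density already sums over injective $\phi$, so the combined normalization and labelling count yields $N^{v-1}(1+o(1))$, and each of the $e$ classes carries a factor $N^{-k_i}\cdot a_{k_i}(1+o(1))$ (diagonal classes being negligible). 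The net power of $N$ is $v - 1 - \sum_i k_i$. Since a connected simple graph satisfies $v - 1 \le e \le \sum_i k_i$ (as each $k_i\ge 1$), with equality in both places precisely when the simple graph is a tree and every $k_i = 1$ — i.e. $T$ is a fat tree in which the $i$-th undirected edge has multiplicity $2k$ for some $k$ and the relevant count of such edges is $q_k$. In that case the exponent is $0$ and the limit is $\prod_{k\ge 1} a_k^{q_k}$; otherwise the exponent is strictly negative and the limit is $0$.

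The main obstacle I anticipate is twofold. First, one must justify that, among injective labellings, only those for which the induced simple graph has \emph{exactly} the structure above survive, and in particular that no cancellation or over-counting occurs when several multi-edges of $T$ collapse onto the same index pair under $\phi$ — this requires showing such collapses are suppressed by an extra power of $N^{-1}$, which follows from the injectivity of $\phi$ together with the fact that merging edge-classes only decreases $v$ faster than it decreases $\sum k_i$. Second, one must handle the $^*$-structure (orientations $\eps(j)\in\{1,*\}$): because $\mu_N$ is invariant under complex conjugation, the contribution of an edge-class of multiplicity $2k$ depends only on $\int|z|^{2k}d\mu_N$ regardless of how the orientations are distributed, so the orientation data is irrelevant to the limit — this is exactly why the answer is indexed only by the undirected multiplicity type $(q_k)_{k\ge1}$. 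Once these two points are in place, the Euler-characteristic inequality $v-1\le\sum k_i$ with its equality analysis (a connected graph is a tree iff $e = v-1$, and then $e = \sum k_i$ iff all $k_i=1$) closes the argument, and matching with Figure \ref{fig:FatTrees.pdf} identifies the surviving terms as fat trees of type $(q_k)_{k\ge 1}$.
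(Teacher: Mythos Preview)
Your overall strategy is the same as the paper's: write $\esp[\tau_N^0[T(X_N)]]=\frac{(N-1)!}{(N-|V|)!}\,\delta_N^0[T(X_N)]$, factor $\delta_N^0$ over the underlying simple edges using independence, kill terms with an edge of odd multiplicity by centering, and compare the resulting power of $N$ with the tree bound $|V|\le e+1$. However, the execution contains a genuine bookkeeping error that breaks the argument.

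You first (correctly) say that an edge-class of multiplicity $2k$ contributes $\frac{1}{N^k}\int|z|^{2k}\,d\mu_N(z)\sim \frac{a_k}{N}$, but two lines later you write that ``each of the $e$ classes carries a factor $N^{-k_i}\cdot a_{k_i}(1+o(1))$'' and deduce that the net exponent of $N$ is $v-1-\sum_i k_i$. These two statements are inconsistent: each class contributes a factor of order $N^{-1}$, not $N^{-k_i}$, so the correct exponent is $v-1-e$ (this is exactly the paper's $|V|-(B+1)$). With your exponent the equality case forces \emph{every} $k_i=1$, i.e.\ only double trees survive, which contradicts the very next clause where you allow ``the $i$-th undirected edge has multiplicity $2k$ for some $k$'' and claim the limit $\prod_k a_k^{q_k}$. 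With the correct exponent $v-1-e$, the inequality $v-1\le e$ for a connected graph gives equality precisely when the underlying simple graph is a tree, with \emph{no} constraint on the multiplicities $2k_i$; the limit is then $\prod_i a_{k_i}=\prod_{k\ge 1}a_k^{q_k}$ as desired.

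There is a second, smaller gap. Invariance of $\mu_N$ under complex conjugation only gives $\int z^{k_1}\bar z^{k_2}\,d\mu_N\in\mathbb R$; it does \emph{not} force this mixed moment to equal $\int|z|^{k_1+k_2}\,d\mu_N$. For the \emph{bound} (the non-fat-tree case, where you only need $O(N^{-1})$ per class) the trivial inequality $|\int z^{k_1}\bar z^{k_2}\,d\mu_N|\le \int|z|^{k_1+k_2}\,d\mu_N$ together with \eqref{Assum:MomentMu} suffices. For the \emph{exact limit} in the fat-tree case you need $k_1=k_2$ on every simple edge; this is automatic because $T$ is cyclic (hence the directed multigraph is Eulerian) and each edge of a tree is a bridge, so the Eulerian circuit crosses it the same number of times in each direction. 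Finally, your ``first obstacle'' about multi-edges collapsing under $\phi$ is a non-issue for the injective trace: since $\phi$ is injective, distinct vertex pairs of $T$ map to distinct index pairs, so the factorisation over simple edges of $T$ is exact, not just leading-order.
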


\begin{figure}[!h]
\begin{center}
 \includegraphics[height=34mm]{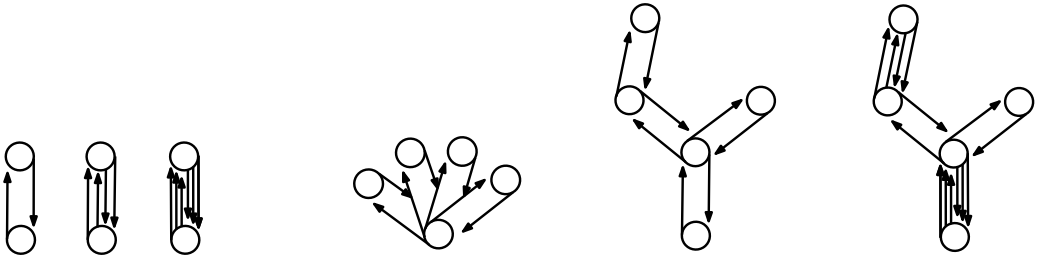}\caption{\label{fig:FatTrees.pdf}Left: the first fat trees with two vertices. From left to right, they are of type $(1,0,0,\dots), (0,1,0,0,\dots)$ and $(0,0,1,0,0,\dots)$. Other examples: from left to right, two fat trees of type $(4,0,0, \dots)$ and one of type $(2,1,1,0,0,\dots)$.}
\end{center}
\end{figure}

\subsection{Limiting joint $^*$-moments of $(\mathbf X_N, \mathbf Y_N)$}

\noindent Based on the results of Section 2, we give a combinatorial description of the limiting $^*$-distribution of $(\mathbf X_N, \mathbf Y_N)$. This approach is different that Ryan's one \cite{RYA} by the so-called clickable partitions. It can be considered as a dual version (see Figure \ref{FatColorCycle}). This generalizes the description \cite{GUI} for non-heavy Wigner matrices by rooted, oriented trees and the description \cite{KSV04} for a single adjacency matrix of a Erd\"os-Renyi sparse weighted graph (see Section \ref{sec:ExGraphs}) by minimal walks on such trees.
\\
\\We first precise the vocabulary we use in order to avoid ambiguities. A tree is a graph with no cycles. We call rooted tree in the complex plane a undirected tree possessing one marked vertex (called its root) embedded in the non negative half plane of $\mathbb C^2$ by planting its root at the origin. A directed edge of such a tree refers to a pair of adjacent vertices. A cycle on a tree is a sequence of directed edges of the form $\big( (v_1, v_2) \etc (v_{L-1}, v_L), (v_L,v_1) \big)$. With this notation, $L$ is called the length of the cycle and the directed edge $(v_n,v_{n+1})$ is called the $n$-th step of $c$ ($n=1\etc L$ with indices modulo $L$).
\\
\\From now, we fix a $^*$-polynomial $P = x_{\gamma(1)} P_1(\mathbf y) \dots x_{\gamma(L)} P_L(\mathbf y)$, where $L\geq 1$, $\gamma: \{1\etc L\} \to \{1\etc p\}$, and $P_1Ê\etc P_L$ are polynomials. We set 	
	\eq
		\Phi(P) = \Nlim \esp \big[ \frac 1 N \Tr \big[ P(\mathbf X_N, \mathbf Y_N) \big] \Big],
	\qe

\noindent and give a combinatorial description of this quantity. By linearity and traciality, this characterizes the limiting $^*$-distribution of $(\mathbf X_N, \mathbf Y_N)$.

\begin{Def}[Colored, minimal cycles on trees]~
\\We set $\mathcal  L^{(\gamma)}$ the set of all couples $(G,c)$, where $G$ is a rooted tree in the complex plane with less than $\lfloor \frac L 2 \rfloor$ edges, and $c$ is a cycle on $G$ with the following properties:
\begin{itemize}
	\item $c$ starts at the root of the tree. When it visits a new vertex, it visits the leftmost one. It visits all the vertices of $G$ and has $L$ steps.
	\item By convention, we say that the $n$-th step of $c$ is of color $\gamma(n)$, $n=1\etc L$. Then, $c$ must visit each edge of $T$ with a single color.
\end{itemize}
\end{Def}

\noindent We have drawn some examples of minimal cycles on trees (in one color) in Figure \ref{fig:RunningOnTheTree}. From now we fix $(G,c)$ in $\mathcal  L^{(\gamma)}$ and define separately weights associated to heavy Wigner and other matrices.

\begin{figure}[!h]
\begin{center}
\includegraphics[width=140mm]{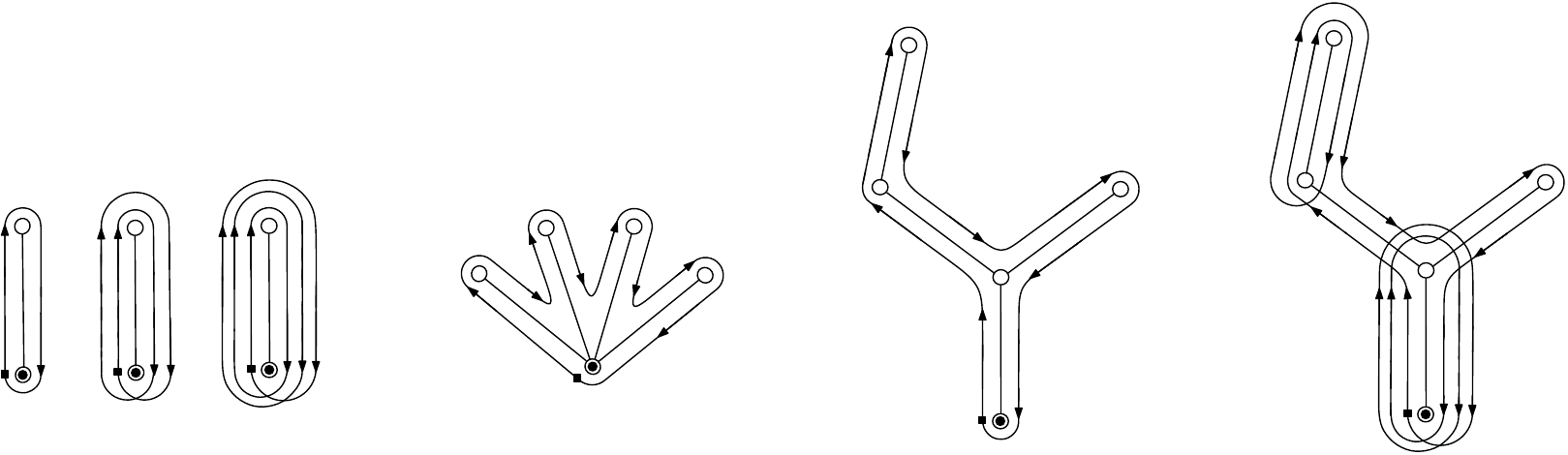}
\caption{\label{fig:RunningOnTheTree} Examples of minimal cycles on trees (in one color). The root of the tree is marked with a black dotted point. The start of the cycle is announced by a black square. We figure out the direction of the cycle each time it approaches a vertex.}
\end{center}
\end{figure}

\begin{Def}[Heavy Wigner weights]~\label{def:HWWeight}
\\ For any edge $e$ of the tree, we denote by $j(e)$ the color in $\{1\etc p\}$ of steps on this edge, and by $2k(e)$ the number of times the cycle visits it. We set 
	\eqa \label{eq:HWWeight}
		 \omega_{HW}(G, c) = \prod_{  e \textrm{ edge of } G}  a_{j(e),k(e)},
	\qea
where for any $j=1\etc p$, $(a_{j,k})_{k\geq 1}$ stands for the parameter of $X_j\toN$. 
\end{Def}

\begin{Def}[Traffic weights]~\label{def:DWeight}
\\Write the cycle $c = (e_1 \etc e_L)$, where $e_j$ is a directed edge of the tree. For any vertex $v$ of the tree $G$, we define a $^*$-test graph $T_v$ in the variables $P_1(\mathbf y) \etc P_L(\mathbf y)$. The vertices of $T_v$ are the incident edges of $G$ in $v$. If the $n$-th step of $c$ is incident at $v$, then we get an edge between the undirected edges corresponding to $e_n$ and $e_{n+1}$ (with the convention $e_{L+1}=e_1$) which is labelled $P_n(\mathbf y)$. We set
	\eqa
		 \omega_{TR}(G, c) = \prod_{ v \textrm{ vertex of } G}  \tau[T_v],
	\qea
where for any $^*$-test graph $T$, with set of vertices $V$, set of edges $E$ and whose edge $e$ in $E$ is labelled $x_{\gamma(e)}^{\eps(e)}$, 
	\eq
		\tau[T] := \Nlim \frac 1 N \sum_{\phi : V \to \{1\etc N\} } \prod_{e \in E} P_{\gamma(e)}^{\eps(e)}(\mathbf Y_N) \big( \phi(e) \big),
	\qe
(see Section \ref{sec:DistrTraff})
\end{Def}

\noindent See Figures \ref{fig:NodesToTestGraphs3} and \ref{fig:NodesToTestGraphs} for illustrations and concrete procedure to compute these weights.

\begin{figure}[!h]
\parbox{.5cm}{~}
\parbox{9cm}{ \includegraphics[width=80mm]{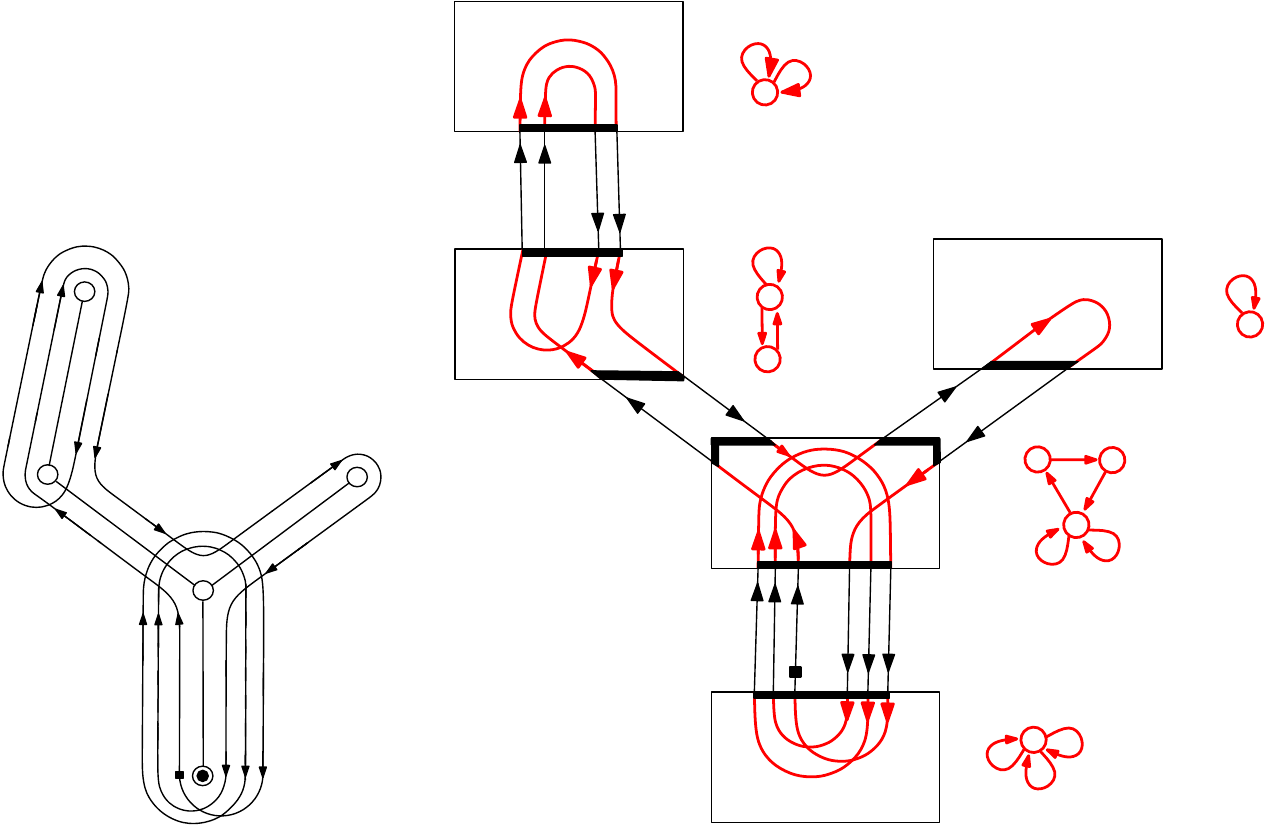}}
\parbox{5cm}{ \caption{\label{fig:NodesToTestGraphs3} Left: A colored, minimal cycles on a tree. Right: We extract the features of the cycle by drawing boxes around each vertex and forgetting the initial tree. The edges out of the boxes give the weight $\omega_{HW}$: we count the number and the color of visits of each edges and use formula \eqref{eq:HWWeight}. On the right of each box, we have represented the associated $^*$-test graph in variables $P_1(\mathbf y) \etc P_L(\mathbf y)$. The product of their trace of $^*$-test graph (see Section \ref{sec:DistrTraff}) gives the weight $\omega_{TR}$.} }
\end{figure}

\begin{figure}[!h]
\begin{center}
\includegraphics[width=140mm]{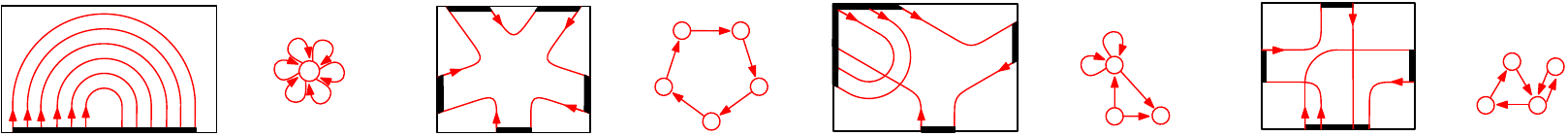}
\caption{\label{fig:NodesToTestGraphs}Some boxes that can be obtained by the process of Figure \ref{fig:NodesToTestGraphs3} and the graphs obtained from them.}
\end{center}
\end{figure}

\begin{Prop}[Joint moments in heavy Wigner and deterministic matrices]~\label{Prop:ComputMoments}
\\For any polynomial $P$ of the form
	$P = x_{\gamma(1)} P_1(\mathbf y) \dots x_{\gamma(L)} P_L(\mathbf y)$,
one has
	\eqa
		\Phi(P) = \sum_{(G,c) \in \mathcal L^{(\gamma)}}   \omega_{HW}(G, c) \times  \omega_{TR}(G, c).
	\qea
\end{Prop}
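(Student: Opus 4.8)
The plan is to compute the limit $\Phi(P) = \lim_N \esp[\frac1N\Tr[P(\mathbf X_N,\mathbf Y_N)]]$ by combining the two main structural inputs of Section 2: the traffic-asymptotic freeness of $(X_1^{(N)}),\dots,(X_p^{(N)}),\mathbf Y_N$ (Theorem on asymptotic traffic-freeness) together with the explicit limiting distribution of traffics of a single heavy Wigner (Proposition \ref{Prop:DistrHeavy}). Concretely, $\Phi(P)$ is a trace of a $^*$-test graph (actually a cyclic $^*$-graph: the single cycle $C_L$ with $L$ vertices, the $n$-th edge carrying $x_{\gamma(n)}$ and a decoration by $P_n(\mathbf y)$). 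First I would expand this trace using the rule of traffic-freeness: the limiting traffic state of the free product is obtained by summing, over all ways of ``gluing'' the cycle along the partition of its edges into the $X_j$-colored part and the $\mathbf y$-part, the product of the injective traffic states of each family on the corresponding quotient graph. This is the analogue, in the traffic setting, of the moment-cumulant expansion; the key point is that traffic-freeness forces the contributing quotients to be precisely those where the $X_j$-edges form a \emph{fat tree} (this is where Proposition \ref{Prop:DistrHeavy} enters: the injective traffic state of $X_j^{(N)}$ on a cyclic $^*$-test graph vanishes in the limit unless the graph is a fat tree of some type, in which case it equals $\prod_k a_{j,k}^{q_k}$).

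Next I would set up the bijection between the surviving quotients and the index set $\mathcal L^{(\gamma)}$ of colored minimal cycles on trees. Starting from the cycle $C_L$, a gluing that makes the $\mathbf x$-edges into a fat tree corresponds exactly to identifying vertices of $C_L$ so that the $\mathbf x$-part becomes a tree $G$ (planted at the root = image of the base point of the trace) with multiplicities, and the original cyclic walk on $C_L$ descends to a cycle $c$ of length $L$ on $G$ visiting every edge. The ``leftmost new vertex'' and ``single color per edge'' conditions in the definition of $\mathcal L^{(\gamma)}$ are exactly the normalizations that make this correspondence one-to-one: the leftmost convention picks a canonical planar representative of the quotient, and the single-color condition is forced because a fat tree's edges are visited by steps of one $X_j$ only (two different $X_j$'s are traffic-independent, so a quotient mixing colors on one edge would factor through a strictly smaller graph and not contribute). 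Under this bijection, the heavy-Wigner factor $\prod_j \prod_{k}a_{j,k}^{q_k}$ reorganizes edge-by-edge into $\omega_{HW}(G,c) = \prod_{e} a_{j(e),k(e)}$ of Definition \ref{def:HWWeight}, since each edge $e$ of $G$ carries a single color $j(e)$ and is traversed $2k(e)$ times.

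The remaining factor is the contribution of $\mathbf Y_N$. After gluing the $\mathbf x$-edges, the decorations $P_n(\mathbf y)$ sit on a graph which, because $G$ is a tree, splits as a disjoint union over the vertices $v$ of $G$: locally around $v$ the cycle $c$ enters and leaves through the incident edges of $G$, and the decorations attached to the corresponding steps assemble into precisely the $^*$-test graph $T_v$ of Definition \ref{def:DWeight} (vertices of $T_v$ = germs of incident edges at $v$, edges of $T_v$ = consecutive steps of $c$ meeting at $v$, labelled by the intervening $P_n(\mathbf y)$). Traffic-independence of $\mathbf Y_N$ from $\mathbf X_N$, plus the fact that the tree structure makes these local pieces share no vertices after the gluing, gives that the $\mathbf Y_N$-contribution factors as $\prod_v \tau[T_v] = \omega_{TR}(G,c)$; here Assumptions 1--3 are what guarantee $\tau[T_v]$ exists as a limit and that the error terms (non-tree quotients, concentration corrections) vanish — Assumption 3 controls exactly the non-dominant terms in $N$, Assumption 2 kills the covariance-type corrections, Assumption 1 ensures convergence of each $\tau[T_v]$. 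Summing over $(G,c)\in\mathcal L^{(\gamma)}$ yields the claimed formula. The main obstacle I expect is the bookkeeping in the bijection of the second paragraph: verifying that the planar ``leftmost-first'' convention together with the color constraints really cuts down the a priori many quotient graphs to exactly one representative per element of $\mathcal L^{(\gamma)}$, and that no fat-tree quotient is over- or under-counted — in other words, checking that the orbit of planar embeddings under the walk is rigidified correctly. Once that combinatorial identification is nailed down, the analytic part (passing to the limit, discarding lower-order terms via Assumption 3) is routine given the earlier theorems.
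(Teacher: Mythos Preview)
Your overall architecture is right and matches the paper's: rewrite $\Phi(P)$ as the trace of the cyclic $^*$-test graph $T_P$, expand via traffic-asymptotic freeness as a sum over partitions $\pi$ of the vertex set with $T_P^\pi$ a free product, and use Proposition~\ref{Prop:DistrHeavy} to force the $\mathbf x$-part of each surviving $T_P^\pi$ to be a fat tree. The gap is in your second and third paragraphs: the map from contributing partitions $\pi$ to elements $(G,c)\in\mathcal L^{(\gamma)}$ is \emph{not} a bijection, and this is not a mere bookkeeping nuisance --- it is the mechanism that produces $\omega_{TR}$.

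Concretely, the traffic-freeness formula gives, for each $\pi$, a product of \emph{injective} traces $\tau^0[\tilde T]$ over the $\mathbf y$-components $\tilde T$ of $T_P^\pi$. But $\omega_{TR}(G,c)=\prod_v \tau[T_v]$ uses the \emph{non-injective} trace $\tau$. These differ exactly by a sum over partitions of the vertices of $T_v$ (the relation $\tau[T_v]=\sum_{\sigma}\tau^0[T_v^\sigma]$). The paper's key observation is that for a fixed $(G,c)$ there is a \emph{coarsest} partition $\pi_0$ with $(G_{\pi_0},c_{\pi_0})=(G,c)$, and the other $\pi$ mapping to $(G,c)$ are precisely the sub-partitions of $\pi_0$ that refine only inside the individual $T_v$'s. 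Summing $\prod_{\tilde T}\tau^0[\tilde T]$ over this fiber therefore rebuilds $\prod_v \tau[T_v]$ via the M\"obius relation (\ref{eq:InjStand}). Your sentence ``the tree structure makes these local pieces share no vertices after the gluing, gives that the $\mathbf Y_N$-contribution factors as $\prod_v\tau[T_v]$'' elides exactly this step: factorization over $v$ is fine, but each factor is a priori $\tau^0$ on a \emph{particular} quotient of $T_v$, not $\tau[T_v]$, and you only recover $\tau[T_v]$ after summing over the many $\pi$ in the fiber. Once you replace ``bijection'' by ``surjection whose fibers are indexed by $\prod_v\mathcal P(V(T_v))$'' and carry out that sum, the argument goes through.
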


\noindent We deduce from Proposition \ref{Prop:ComputMoments} the following simple facts.

\begin{Cor}[Basic properties of limiting variables]~\label{Cor:BasicProp}
\begin{enumerate}
	 	\item For any $j=1\etc p$, $\Phi ( x_j ) = 0$ and $\Phi ( x_j^2 ) = a_{j,1}$.
	 	\item For any $L\geq 1$, any $\gamma : \{1\etc L\} \to \{1 \etc p\}$ and any $^*$-polynomials $P_1(\mathbf y) \etc P_L(\mathbf y)$, the quantity $\Phi \big( x_{\gamma(1)}P_1(\mathbf y) \dots  x_{\gamma(L)} P_L(\mathbf y) \big) $ vanishes as soon as the number of occurrence of one variable is odd.
		\item for any integers $n_1\etc n_L\geq 0$, any distinct indices $i_1Ê\etc i_L$ in $\{1\etc p\}$ and any $^*$-polynomial $P$, one has $\Phi \big( x_{i_1}^{n_1} \dots x_{i_L}^{n_L} P(\mathbf y) \big) =  \Phi ( x_{i_1}^{n_1} ) \dots \Phi ( x_{i_L}^{n_L} ) \Phi\big( P(\mathbf y) \big)$.
	\end{enumerate}
\end{Cor}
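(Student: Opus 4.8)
The plan is to derive all three items as direct consequences of the combinatorial formula in Proposition \ref{Prop:ComputMoments}, namely $\Phi(P) = \sum_{(G,c) \in \mathcal L^{(\gamma)}} \omega_{HW}(G,c)\,\omega_{TR}(G,c)$. The strategy throughout is the same: identify which couples $(G,c)$ contribute a nonzero weight, and then read off the answer. First I would record the two degenerate cases. For $\Phi(x_j)$ we have $L=1$, so $\lfloor L/2 \rfloor = 0$: the only tree $G$ has no edges, hence a single vertex, but then $c$ must have $L=1$ step, which is impossible on a treeless graph (every step is a directed edge). So $\mathcal L^{(\gamma)}$ is empty and $\Phi(x_j)=0$. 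For $\Phi(x_j^2)$ we have $L=2$, $\lfloor L/2\rfloor = 1$, so $G$ is either a single vertex (again no cycle of length $2$ exists) or a single edge $\{v_0, v_1\}$ rooted at $v_0$; the cycle $c = \big((v_0,v_1),(v_1,v_0)\big)$ is the unique minimal cycle, it uses the one edge $2k(e)=2$ times in color $j$, so $\omega_{HW} = a_{j,1}$, and each of the two vertices carries the trivial $^*$-test graph $T_v$ (no $P_n$'s since $P_1 = P_2 = 1$), giving $\omega_{TR}=1$. Hence $\Phi(x_j^2) = a_{j,1}$. This proves item 1.

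For item 2, the key observation is the color-consistency constraint built into $\mathcal L^{(\gamma)}$: the cycle $c$ must traverse each edge of $G$ with a single color, and the Heavy Wigner weight $\omega_{HW}(G,c) = \prod_e a_{j(e),k(e)}$ requires that each edge be visited an \emph{even} number $2k(e)$ of times — indeed the definition of $k(e)$ presupposes the number of visits to $e$ is even, and any couple $(G,c)$ for which some edge is visited an odd number of times simply does not belong to $\mathcal L^{(\gamma)}$ (or contributes weight zero). Now fix a color $j_0$ and suppose the variable $x_{j_0}$ occurs an odd number of times in $P$, i.e.\ $|\gamma^{-1}(j_0)|$ is odd. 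Summing the visit counts over all edges $e$ with $j(e) = j_0$ counts exactly the steps of $c$ with color $j_0$, which is $|\gamma^{-1}(j_0)|$, an odd number. But that sum is $\sum_{e: j(e)=j_0} 2k(e)$, which is even — a contradiction. Therefore $\mathcal L^{(\gamma)}$ is empty and $\Phi(P) = 0$. I would present this as the main conceptual point of the proof; it is short but is the only place where one must look carefully at the structure of $\mathcal L^{(\gamma)}$ rather than just quote the formula.

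For item 3, let $P = x_{i_1}^{n_1}\cdots x_{i_L}^{n_L} P(\mathbf y)$ with $i_1,\dots,i_L$ \emph{distinct}. By item 2 we may assume every $n_\ell$ is even, $n_\ell = 2m_\ell$, since otherwise both sides vanish. The point is that a tree $G$ carrying a minimal cycle $c$ with these color data must decompose as a disjoint union (joined only at the root) of subtrees, one monochromatic block for each color $i_\ell$ plus whatever supports the final $P(\mathbf y)$ factor: since the colors $i_\ell$ are distinct and since $c$ traverses a maximal monochromatic run $x_{i_\ell}^{2m_\ell}$ consecutively before switching color, and since the underlying graph is a tree (so there is no way to re-enter a monochromatic region except through the unique point where the cycle left it), the excursion of $c$ in color $i_\ell$ must be a closed sub-walk returning to a single branch-point, and that branch-point is forced to be the root by the leftmost-visiting convention. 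Thus $(G,c)$ factors as $(G_{i_1},c_{i_1}),\dots,(G_{i_L},c_{i_L})$ and a couple $(G_0,c_0)$ for the $\mathbf y$-part, all glued at the root; correspondingly $\omega_{HW}$ and $\omega_{TR}$ both factor as products over these blocks (the $^*$-test graphs $T_v$ at non-root vertices lie entirely within one block, and at the root $T_{v_0}$ splits as a disjoint union over blocks, whose trace-of-test-graph functional is multiplicative on disjoint unions). Summing the factored weights over the independent choices of each block gives $\Phi(x_{i_1}^{n_1})\cdots\Phi(x_{i_L}^{n_L})\,\Phi(P(\mathbf y))$.

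The main obstacle I anticipate is making the factorization in item 3 fully rigorous — in particular justifying that the leftmost-first visiting convention really does force all monochromatic excursions to hang off the root, and verifying that the root's $^*$-test graph $T_{v_0}$ decomposes as the disjoint union of the corresponding graphs in the factors with matching edge-labels, so that $\tau[T_{v_0}]$ multiplies correctly. Everything else is bookkeeping on top of Proposition \ref{Prop:ComputMoments}. If the clean tree-decomposition argument proves awkward, an alternative is to argue directly from traffic-freeness (Theorem [1.4] / the asymptotic traffic-free product), under which $(X_{i_1}),\dots,(X_{i_L}),\mathbf Y_N$ are asymptotically traffic-free and hence, a fortiori, asymptotically $^*$-free, which yields item 3 immediately for single-variable polynomials in distinct free families; but I would prefer the self-contained combinatorial route to keep the corollary close to its stated source.
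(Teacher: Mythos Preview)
Your approach is the paper's own: items 1 and 2 via the parity of edge-visits in a closed walk on a tree, item 3 via the factorization of $(G,c)$ into monochromatic sub-cycles glued at the root. The paper's proof is in fact terser than yours and leaves exactly the points you flag as ``obstacles'' unspoken.

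Two corrections. First, in item 3 the return of each monochromatic excursion to the root is forced by the tree structure and the distinctness of the colours (once the $i_\ell$-block ends, no $i_\ell$-edge can ever be crossed again, so the unique path back to the root in the tree must be trivial), not by the leftmost-visiting convention; the latter is only a normalization to make the bijection between $(G,c)$ and the tuple $\big((G_\ell,c_\ell)\big)_\ell$ well-defined. Second, $T_{v_0}$ does \emph{not} split as a disjoint union over blocks: at each block boundary there is an edge of $T_{v_0}$ connecting a vertex from block $\ell$ to one from block $\ell+1$. The reason $\tau[T_{v_0}]$ still factors correctly is that every edge of $T_{v_0}$ except the last carries the label $1$, and an edge labelled $1$ contributes a Kronecker delta, collapsing $T_{v_0}$ to a single loop labelled $P(\mathbf y)$; thus $\tau[T_{v_0}]=\Phi\big(P(\mathbf y)\big)$, while each $\tau[T_{v_0}^{(\ell)}]=1$. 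This is what the paper means by ``the weights $\omega_{TR}$ come from the root of the trees''.

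Finally, your proposed fallback (``traffic-free and hence, a fortiori, $^*$-free'') is false: traffic-freeness does not imply $^*$-freeness, as Proposition~\ref{Prop:NonAsymptFree} demonstrates explicitly for heavy Wigner matrices. The factorization in item 3 is a genuine consequence of the combinatorics (or, equivalently, of traffic-freeness applied to this particular ``single-block'' moment), not a corollary of $^*$-freeness.
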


\subsection{Applications}

\subsubsection{The non asymptotic $^*$-freeness of heavy Wigner and independent arbitrary matrices}

\noindent The notions of $^*$-freeness and traffics freeness are different \cite{MAL12}. In particular, by Definition \ref{Def:Freeness}, if the matrices $X_N$ and $Y_N$ are asymptotically $^*$-free, then we get by definition \ref{Def:Freeness}
	\eq
		f(x,y) := \Phi  \Big (  \big(x^2 - \Phi( x^2) \big) \big( y^2 - \Phi( y^2) \big)   \big( x^2 - \Phi(x^2) \big) \big( y^{*2} - \Phi(y^{*2}) \big)  \Big) =0.
	\qe

\noindent The following Proposition extends the result of Ryan \cite{RYA} which states that independent heavy Wigner matrices with non trivial parameters are not asymptotically $^*$-free. We define the bilinear form
	\eqa\label{eq:Phi2}
		\Phi^{(2)} : (P_1,P_2) \mapsto \Nlim \esp \bigg[ \frac 1 N\Tr \big[  P_1(\mathbf X_N, \mathbf Y_N)  \circ P_2(\mathbf X_N, \mathbf Y_N) \big]  \bigg],
	\qea

\noindent defined for non commutative $^*$-polynomials $P_1$ and $P_2$, where $\circ$ stands for the Hadamard (entry-wise) product. This quantity is well defined since it can be written as the limit of the trace of cyclic $^*$-test graphs in $\mathbf X_N, \mathbf Y_N$ when $P_1$ and $P_2$ are monic $^*$-monomials (see \cite{MAL12}).

\begin{Prop}[The non asymptotic freeness of heavy Wigner and deterministic matrices]~\label{Prop:NonAsymptFree}
\\Let $X_N$ be a heavy Wigner matrix with parameters $(a_{k})_{k\geq 1}$. Let $Y_N$ be an arbitrary random matrix, independent of $X_N$ and satisfying the assumptions of Theorem \ref{MainTh}. Denote by $\Phi$ their joint limiting $^*$-distribution given by Theorem \ref{MainTh} and $\Phi^{(2)}$ the bilinear form given by (\ref{eq:Phi2}). Then, one has 
	\eq
		f(x,y) = a_2 \times g(y), \ \ g(y) =   \Phi^{(2)}( y^2 , y^{*2} ) - \big| \Phi(y^2) \big|^2.
	\qe
\end{Prop}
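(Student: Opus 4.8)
The plan is to compute both sides of the identity directly from the combinatorial formula of Proposition~\ref{Prop:ComputMoments}, exploiting the multilinearity of $\Phi$ in the blocks $P_1(\mathbf y)\etc P_L(\mathbf y)$. Expanding $f(x,y)$ by multilinearity, $f$ is a combination of terms of the form $\Phi\big(x^2 Q_1(y) x^2 Q_2(y)\big)$ where each $Q_i$ is one of $y^2,\, y^{*2},\, \Phi(y^2),\, \Phi(y^{*2})$ (constants). Since $\Phi$ is linear and the constant terms factor out, it suffices to understand $\Phi\big(x^2 Q_1 x^2 Q_2\big)$ for $Q_1\in\{y^2-\Phi(y^2)\}$, $Q_2\in\{y^{*2}-\Phi(y^{*2})\}$; in other words I work with $P = x\,\mathbf 1\, x\, Q_1\, x\,\mathbf 1\, x\, Q_2$, so $L=4$, $\gamma\equiv 1$, $P_1 = P_3 = \mathbf 1$, $P_2 = Q_1$, $P_4 = Q_2$.

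First I enumerate $\mathcal L^{(\gamma)}$ for $L=4$: a rooted plane tree with at most $2$ edges, carrying a minimal cycle of $4$ steps all of one color visiting every edge. There are exactly two such trees: the tree with a single edge (the cycle traverses it back and forth twice, so the edge has multiplicity $4$), and the path with two edges (the cycle goes out--out--back--back, each edge having multiplicity $2$). For the one-edge tree, $k(e)=2$, so $\omega_{HW}=a_{2}$; there are two vertices (root and leaf), and computing $\omega_{TR}$ via Definition~\ref{def:DWeight} one finds the two $^*$-test graphs $T_v$ produce, after evaluation, a factor involving $\Phi^{(2)}(Q_1,Q_2)$ — this is precisely where the Hadamard-product bilinear form enters, because two steps of the cycle are incident at the same vertex and the two corresponding $P_n(\mathbf y)$-labelled edges of $T_v$ become parallel, which the trace of a $^*$-test graph reads off as an entrywise product. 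For the two-edge path, $k(e)=1$ on both edges, so $\omega_{HW}=a_{1}^2$; the $\omega_{TR}$ factor here reduces to a product of ordinary traces, giving $\Phi(Q_1)\Phi(Q_2)$. Substituting $Q_1 = y^2-\Phi(y^2)$, $Q_2 = y^{*2}-\Phi(y^{*2})$, the two-edge contribution vanishes because $\Phi(Q_1)=\Phi(Q_2)=0$, and the one-edge contribution gives $a_2\big(\Phi^{(2)}(y^2,y^{*2}) - \Phi(y^2)\overline{\Phi(y^2)}\big) = a_2\, g(y)$, using $\Phi^{(2)}$ is bilinear and that the cross terms $\Phi^{(2)}(y^2,\Phi(y^{*2}))$ collapse to $\Phi(y^2)\overline{\Phi(y^2)}$ because one argument is a scalar multiple of $\mathbf 1$. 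This is the claimed formula.

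The main obstacle I anticipate is the bookkeeping in the $\omega_{TR}$ computation for the one-edge tree: I must verify carefully, using the construction of $T_v$ in Definition~\ref{def:DWeight}, that the $^*$-test graph at each of the two vertices, evaluated by $\tau[\,\cdot\,]$, produces exactly the Hadamard pairing of $Q_1$ with $Q_2$ rather than, say, the product $\Phi(Q_1)\Phi(Q_2)$ or $\Phi(Q_1 Q_2)$ — the distinction depends on which steps of the $4$-cycle are incident at which vertex and in which cyclic order, including the interaction with the trivial blocks $P_1=P_3=\mathbf 1$. A related subtlety is that $\Phi^{(2)}$ is a priori defined only for monic monomials and extended by bilinearity; I should note that since $y^2-\Phi(y^2)$ is an affine combination of $\mathbf 1$ and $y^2$, all the $\Phi^{(2)}$-values appearing are finite combinations of well-defined quantities, so the manipulation $\Phi^{(2)}(y^2-\Phi(y^2),\,y^{*2}-\Phi(y^{*2})) = \Phi^{(2)}(y^2,y^{*2}) - |\Phi(y^2)|^2$ is legitimate (using $\Phi^{(2)}(\mathbf 1, R) = \Phi(R)$, which follows from the definition since the Hadamard product with the all-ones matrix is the identity). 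Once these two points are pinned down, the rest is the finite enumeration described above.
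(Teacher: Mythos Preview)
Your enumeration of $\mathcal L^{(\gamma)}$ for $L=4$ is incomplete, and this is a genuine gap. There are \emph{three} rooted plane trees carrying a minimal $4$-cycle in one color, not two: besides the single edge (multiplicity $4$) and the path $r\!-\!v_1\!-\!v_2$ (cycle out--out--back--back), there is the star with two edges both attached to the root (cycle out--back--out--back). This is why the paper records $\Phi(x^4)=2a_1^2+a_2$ (Figure~\ref{CyclesFat}) and, more to the point, why the general formula of Figure~\ref{Computationmoments3} has three terms:
\[
\Phi\big(xP_1xP_2xP_3xP_4\big)=a_1^2\,\Phi(P_1P_3)\Phi(P_2)\Phi(P_4)+a_1^2\,\Phi(P_1)\Phi(P_3)\Phi(P_2P_4)+a_2\,\Phi^{(2)}(P_1,P_3)\Phi^{(2)}(P_2,P_4).
\]
With $P_1=P_3=\mathbf 1$, $P_2=Q_1=y^2-\Phi(y^2)$, $P_4=Q_2=y^{*2}-\Phi(y^{*2})$, the star tree contributes $a_1^2\,\Phi(Q_1Q_2)$, which does \emph{not} vanish.

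There is a second, compensating error: you silently replaced $f(x,y)=\Phi\big((x^2-\Phi(x^2))Q_1(x^2-\Phi(x^2))Q_2\big)$ by $\Phi(x^2Q_1x^2Q_2)$. Expanding the $x$-centering correctly (using $\Phi(x^2R)=a_1\Phi(R)$ for $R$ a polynomial in $y$, which is the $L=2$ case of Proposition~\ref{Prop:ComputMoments}) gives
\[
f(x,y)=\Phi(x^2Q_1x^2Q_2)-a_1^2\,\Phi(Q_1Q_2),
\]
and it is precisely this subtraction that kills the star-tree term $a_1^2\,\Phi(Q_1Q_2)$. Your final answer is right only because the missing tree and the missing centering cancel each other; as written, neither step is correct on its own. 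The paper's proof carries out exactly this full bookkeeping (expanding all centerings, listing all three $L=4$ trees) and the cancellation is visible there. A small aside: your justification that $\Phi^{(2)}(\mathbf 1,R)=\Phi(R)$ via ``Hadamard product with the all-ones matrix'' is a slip --- the unit $\mathbf 1$ evaluates to $I_N$, not $J_N$ --- though the conclusion is still correct since $\Tr[I_N\circ R]=\Tr R$.
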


\noindent Remark that $a_2$ is nonzero as soon as the parameter of $X_N$ is not trivial (Proposition \ref{prop:NecessaryCondition}). Here are examples of matrices $Y_N$ as in the theorem such that $(X_N, Y_N)$ are not $^*$-free. 

\begin{Prop}[Example of random matrices non asymptotically $^*$-free from $\mathbf X_N$]~\label{Prop:NonAsymptFree2}
\begin{enumerate}
	\item If $Y_N$ is a heavy Wigner matrix with parameter $(b_k)_{k\geq 1}$, then $ g(y) = b_2$.
	\item If $Y_N$ is a diagonal matrix having a limiting $^*$-distribution, then 
		\eq	
			 g(y) = \Nlim \esp\Big[ \frac 1 N \Tr [ Y_N^2 Y_N^{*2} ] -  \big| \frac 1 N \Tr [Y_N^2] \big|^2 \Big].
		\qe
	Hence, $(X_N,Y_N)$ are not asymptotically $^*$-free as soon as the limiting eigenvalues distribution of $Y_N$ is not a Dirac mass.
\end{enumerate}
\end{Prop}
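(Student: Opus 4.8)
The plan is to exploit that, although the identity $f(x,y)=a_2\,g(y)$ of Proposition~\ref{Prop:NonAsymptFree} was obtained from the joint limiting $^*$-distribution of $(X_N,Y_N)$ via Theorem~\ref{MainTh}, the quantity $g(y)=\Phi^{(2)}(y^2,y^{*2})-|\Phi(y^2)|^2$ involves only the variable $y$, so both cases reduce to a computation with the limiting distribution of traffics of $Y_N$ alone. Indeed $\Phi(y^2)=\Nlim\esp[\tfrac1N\Tr Y_N^2]$ and $\Phi^{(2)}(y^2,y^{*2})=\Nlim\esp[\tfrac1N\Tr(Y_N^2\circ Y_N^{*2})]$ are both limits of expected traces of cyclic $^*$-test graphs in $Y_N$ --- for the Hadamard trace this is noted just after \eqref{eq:Phi2} --- and each such trace expands, on splitting the sum over maps of the vertex set into $\{1,\dots,N\}$ according to the partition it induces, into a finite sum $\sum_\pi\tau_N^0[T^\pi]$ over quotient graphs $T^\pi$. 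I would then pass to the limit term by term, using Proposition~\ref{Prop:DistrHeavy} in case~(1) and the elementary traffic distribution of a diagonal matrix in case~(2).

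For case~(1), $Y_N$ is Hermitian, so by Proposition~\ref{Prop:DistrHeavy} only the underlying multigraph of each quotient matters. The test graph of $\tfrac1N\Tr Y_N^2$ is the double edge on two vertices, a fat tree of type $(1,0,0,\dots)$ whose only other quotient carries a loop, so $\Phi(y^2)=b_1$ (as already given by Corollary~\ref{Cor:BasicProp}). The test graph of $\tfrac1N\Tr(Y_N^2\circ Y_N^{*2})=\tfrac1N\sum_i\big(\sum_a|Y_N(i,a)|^2\big)^2$ is a ``cherry'': a centre joined to two leaves, each by a double edge. Exactly two of its quotients are fat trees: the cherry itself, of type $(2,0,0,\dots)$ with weight $b_1^2$, and the graph obtained by merging the two leaves, a single edge of multiplicity $4$, of type $(0,1,0,0,\dots)$ with weight $b_2$; every other identification of the three vertices produces a loop, hence a non-fat-tree, contributing $0$ in the limit by Proposition~\ref{Prop:DistrHeavy}. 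Therefore $\Phi^{(2)}(y^2,y^{*2})=b_1^2+b_2$, and $g(y)=(b_1^2+b_2)-b_1^2=b_2$.

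For case~(2), $Y_N$ is diagonal, so the Hadamard product of $Y_N^2$ and $Y_N^{*2}$ coincides with their ordinary product and $\Phi^{(2)}(y^2,y^{*2})=\Nlim\esp[\tfrac1N\Tr(Y_N^2Y_N^{*2})]$; moreover the concentration hypothesis on $Y_N$ (Proposition~\ref{Prop:Example}(2), i.e.\ Assumption~2 in this setting) gives $|\Phi(y^2)|^2=\Nlim\big|\esp[\tfrac1N\Tr Y_N^2]\big|^2=\Nlim\esp\big[\big|\tfrac1N\Tr Y_N^2\big|^2\big]$, which yields the stated formula $g(y)=\Nlim\esp\big[\tfrac1N\Tr(Y_N^2Y_N^{*2})-\big|\tfrac1N\Tr Y_N^2\big|^2\big]$. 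Writing $\lambda_1,\dots,\lambda_N$ for the diagonal entries, $g(y)=\Nlim\esp\big[\tfrac1N\sum_i|\lambda_i|^4-\big|\tfrac1N\sum_i\lambda_i^2\big|^2\big]$, which is $\geq 0$ by two successive applications of the Cauchy--Schwarz inequality to the empirical measure $\tfrac1N\sum_i\delta_{\lambda_i}$, with equality only when the squared eigenvalues are asymptotically deterministic and equal; in particular $g(y)>0$ as soon as the limiting eigenvalue distribution of $Y_N$ is not a Dirac mass. Since $a_2\ne 0$ whenever the parameter of $X_N$ is non-trivial (Proposition~\ref{prop:NecessaryCondition}), $f(x,y)=a_2\,g(y)\ne 0$ in that case, and Definition~\ref{Def:Freeness}, applied to the alternating centred word with $P_1=P_3=x^2-\Phi(x^2)$, $P_2=y^2-\Phi(y^2)$ and $P_4=y^{*2}-\Phi(y^{*2})$, shows that $X_N$ and $Y_N$ are not asymptotically $^*$-free.

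The concentration step in case~(2) and the interchange of $\Nlim$ with the finite sum over quotients are routine given the assumptions; the one point needing care is the graph bookkeeping of case~(1) --- verifying that, among all identifications of the three vertices of the cherry, only the trivial one and the merging of the two leaves yield fat trees, and reading off their types $(2,0,0,\dots)$ and $(0,1,0,0,\dots)$ correctly --- so that Proposition~\ref{Prop:DistrHeavy} applies without further work.
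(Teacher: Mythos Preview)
Your argument is correct. For part~(2) it coincides with the paper's: both observe that diagonality collapses the Hadamard product to the ordinary product, so $\Phi^{(2)}(y^2,y^{*2})=\Phi(y^2y^{*2})$, and the displayed formula follows.

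For part~(1), however, your route is genuinely different from the paper's and noticeably shorter. The paper establishes $g(y)=b_2$ indirectly: it first computes the full quantity $f(x_1,x_2)$ for \emph{two} independent heavy Wigner matrices by expanding the alternating product and evaluating all the joint moments $\Phi(x_1^2x_2^2)$, $\Phi(x_1^4x_2^2)$, $\Phi(x_1^2x_2^2x_1^2x_2^2)$, etc., via the cycles-on-trees combinatorics of Proposition~\ref{Prop:ComputMoments} (Figures~\ref{CyclesFat} and~\ref{FatColorCycle2}); this yields $f(x_1,x_2)=a_{1,2}a_{2,2}$, and comparison with the general identity $f(x,y)=a_2\,g(y)$ of Proposition~\ref{Prop:NonAsymptFree} gives $g(y)=b_2$. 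You bypass the joint computation entirely: taking Proposition~\ref{Prop:NonAsymptFree} as given, you compute $\Phi^{(2)}(y^2,y^{*2})$ directly from the distribution of traffics of a \emph{single} heavy Wigner matrix (Proposition~\ref{Prop:DistrHeavy}), by recognising the Hadamard trace as $\tau_N$ of a three-vertex ``cherry'' and checking that among its five quotients only the cherry itself and the leaf-merged double edge are fat trees. What the paper's approach buys is a self-contained verification that does not presuppose Proposition~\ref{Prop:NonAsymptFree}; what yours buys is economy --- a five-partition case check instead of several pages of joint-moment enumeration.
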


\subsubsection{A system of Schwinger-Dyson equation for the limiting distribution of independent heavy Wigner and diagonal matrices}

\noindent We prove recurrent relations for the joint moments in $(\mathbf X_N, \mathbf Y_N)$ in the case where the matrices of $\mathbf Y_N$ are diagonal and it satisfies Assumptions 1 and 2. The philosophy of the proof is the same as for the recurrence relation on sparse, weighted, random graphs \cite{KSV04}. A difference with our approach is that these equations involve only moments in the entries of the matrices rather than purely combinatorial quantities. 
\\
\\For any integer $K\geq 1$, we set the $K$-linear forms
	\eq
		\Phi^{(K)} : & (P_1 \etc P_K) \mapsto & \Nlim \esp \bigg[ \frac 1 N \Tr \big[ P_1(\mathbf X_N, \mathbf Y_N) \circ \dots \circ P_K(\mathbf X_N, \mathbf Y_N) \big] \bigg],
	\qe

\noindent defined for $^*$-polynomials $P_1 \etc P_K$ in variables $\mathbf x, \mathbf y$, where $\circ$ denotes the Hadamard (entry-wise) product of matrices. The maps $(\Phi^{(K)})_{K\geq 1}$ are simple examples of statistics of distributions of traffics that are not defined for $^*$-distributions.

\begin{Th}[A Schwinger-Dyson system of equations]\label{th:SchwingerDyson}~
\\For any $j=1\etc p$, we set $(a_{j,k})_{k\geq 1}$ the parameter of the matrix $X_j\toN$. Then, the family of linear forms $\big( \Phi^{(K)}\big)_{K\geq 1}$ satisfies the following equations. For any integer $K\geq1$, any monomials $P_1\etc P_K$ and any $j=1\etc p$, one has
\begin{eqnarray}
		 \Phi^{(K)}(  x_jP_1,P_2 \etc P_K)  & = & \sum_{k\geq 1} a_{j,k}\sum_{\substack{ s_1+\dots + s_K=k \\ s_1\geq 1, \ s_2\etc s_K\geq 0}} \ \sum_{\mathbf L, \mathbf R} \Phi^{(k)}\big ( \mathbf L ) \Phi^{(k+K-1)}\big ( \mathbf R ),\label{TraceFormEq2}
\end{eqnarray}
where the last sum is over all the families of monomials
\begin{eqnarray*}
		\mathbf L & = & (L_1^{(1)}\etc L_{s_1}^{(1)} \etc L_1^{(K)}\etc L_{s_K}^{(K)}),\\
		\mathbf R & = & (R_1^{(1)}\etc R_{s_1}^{(1)},R_0^{(2)}\etc R_{s_2}^{(2)} \etc R_0^{(K)}\etc R_{s_K}^{(K)}
		),
\end{eqnarray*}
such that
\begin{eqnarray*}
		x_jP_1& = & (x_jL_1^{(1)}x_j)R_1^{(1)} \dots  (x_jL_{s_1}^{(1)}x_j)R_{s_1}^{(1)}\\
		P_k  & = & R_0^{(k)}  (x_jL_1^{(k)}x_j)R_1^{(k)} \dots  (x_jL_{s_k}^{(k)}x_j)R_{s_k}^{(k)}, \ k=2 \etc K.
\end{eqnarray*}
\end{Th}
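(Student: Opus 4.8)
The plan is to run the combinatorial formula of Proposition \ref{Prop:ComputMoments} for moments of $(\mathbf X_N,\mathbf Y_N)$ (valid here since diagonal matrices satisfy the three assumptions, by Proposition \ref{Prop:Example}) and extract a recursion by distinguishing the role played by the \emph{first} letter $x_j$ appearing in the first argument $x_j P_1$. Concretely, I would first reduce to the case of monomials $P_1,\dots,P_K$ in $\mathbf x,\mathbf y$ by linearity; then express $\Phi^{(K)}(x_jP_1,P_2,\dots,P_K)$ as a limit of traces of a cyclic $^*$-test graph $T$ formed by gluing the $K$ monomials along the Hadamard product (each $P_i$ contributing one ``strand'' of edges, and all strands sharing the same vertex set $\{1,\dots,N\}$ of the Hadamard-identified indices), and apply the heavy-Wigner expansion: on the $\mathbf X_N$-edges the only surviving contributions come from fat trees (Proposition \ref{Prop:DistrHeavy}), each edge of multiplicity $2k$ weighted by $a_{j,k}$, and every such edge must be retraced. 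Because the matrices $\mathbf Y_N$ are \emph{diagonal}, the $\mathbf Y_N$-labelled pieces do not move the index: each maximal $\mathbf y$-block $P_i(\mathbf Y_N)$ contributes a scalar $P_i(\mathbf Y_N)(\phi,\phi)$ evaluated at the single running index $\phi$ of that position. This is exactly what collapses the general traffic weight $\omega_{TR}$ of Definition \ref{def:DWeight} into a product of ordinary Hadamard-traces $\Phi^{(\bullet)}$, and it is the reason the diagonal case produces a \emph{closed} system.

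The key combinatorial step is the following decomposition. Fix the distinguished $x_j$ at the front of $x_jP_1$. In the fat-tree picture it lies on some edge $e_0$ of the underlying tree; write $2s_1'$ for... more precisely, group the $2k$ traversals of the edges incident to the root-side of that first step. Following the cycle $c$ from the marked $x_j$, every time the walk comes back to this edge $e_0$ it has, in between, performed an excursion that is itself a smaller colored minimal cycle (living on the subtree hanging off the far endpoint of $e_0$), decorated by $\mathbf y$-blocks; this yields the ``inside'' data, contributing a factor $\Phi^{(k)}(\mathbf L)$ where $\mathbf L$ enumerates the monomials $L_1^{(1)},\dots$ read off along these excursions, and the superscript $k$ counts the number of excursions (equivalently half the multiplicity of $e_0$, so it matches the $a_{j,k}$ prefactor). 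What remains of the walk — the portion that never crosses $e_0$ together with the portions after the last return — assembles into the ``outside'' data, a colored minimal cycle on the complementary tree carrying the blocks $R_\bullet^{(\bullet)}$, contributing $\Phi^{(k+K-1)}(\mathbf R)$: the shift $k+K-1$ appears because splitting off $k$ excursions through a single Hadamard node of the original $K$-fold product turns it into a $(k+K-1)$-fold Hadamard product (each of the $k$ returns creates a new Hadamard strand, minus one because the block $x_jP_1$ is consumed into the recursion). The sum over $s_1+\dots+s_K=k$ with $s_1\ge1$ records how the $k$ excursions distribute among the original $K$ arguments, and the constraint on how $x_jP_1$ and the $P_k$ factor as $(x_jL_\bullet x_j)R_\bullet\cdots$ is exactly the bookkeeping of where the retraced $x_j$-edges sit inside each monomial.

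Assembling: $\Phi^{(K)}(x_jP_1,P_2,\dots,P_K)$ equals $\sum_{(G,c)}\omega_{HW}\,\omega_{TR}$; pulling out the first edge $e_0$ factors $\omega_{HW}$ as $a_{j,k}$ times the heavy-Wigner weight of the inside tree times that of the outside tree, and $\omega_{TR}$ as (traffic weight of the inside, which in the diagonal case is $\Phi^{(k)}(\mathbf L)$) times (traffic weight of the outside, $\Phi^{(k+K-1)}(\mathbf R)$) times possibly a compatibility factor at the node where $e_0$ is attached — and the diagonal hypothesis guarantees this node factor is absorbed consistently because all relevant indices coincide there. Summing over all choices of $k\ge1$, of $(s_1,\dots,s_K)$, and of the factorizations $\mathbf L,\mathbf R$ reproduces the right-hand side of \eqref{TraceFormEq2}. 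The main obstacle I anticipate is purely bookkeeping rather than analytic: matching the index shift $k+K-1$ and the admissible factorization patterns exactly, i.e.\ checking that the map from pairs $(G,c)\in\mathcal L^{(\gamma)}$ to (choice of $e_0$; inside minimal cycle; outside minimal cycle; factorizations) is a bijection onto the indexing set of the sum, with no double-counting from the ``visit the leftmost new vertex'' normalization of $c$. Verifying that the concentration Assumption 2 (which the $\mathbf D_N$ satisfy) is what legitimizes replacing $\esp[\frac1N\Tr(\,\cdot\,\circ\cdots\circ\,\cdot\,)]$ of a product by the product of limits inside $\omega_{TR}$ is a routine but necessary point.
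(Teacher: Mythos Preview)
Your proposal is correct and follows essentially the same strategy as the paper: express $\Phi^{(K)}(x_jP_1,\dots,P_K)$ via the combinatorial sum over colored minimal cycles on trees (the paper records this as a separate Proposition~\ref{Prop:ComputMoments2}, the $\Phi^{(K)}$-analogue of Proposition~\ref{Prop:ComputMoments}, using the chain-of-cycles sets $\mathcal L^{(\gamma)}_{\mathbf L}$), then classify the terms according to the multiplicity $2k$ of the first edge $\{1,2\}$, split each cycle into the ``inside'' excursions $d^{(i)}$ and the ``outside'' returns $e^{(i)}$ exactly as you describe, and use diagonality of $\mathbf Y_N$ twice---once to collapse each vertex $^*$-test graph $T_v$ to a single $\Phi$-value, and once to commute $\mathbf y$-blocks across Hadamard slots so that the resulting factors are genuinely $\Phi^{(k)}(\mathbf L)$ and $\Phi^{(k+K-1)}(\mathbf R)$. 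The paper carries out $K=1$ first and then indicates the general case; your direct treatment of general $K$ and your accounting for the index shift $k+K-1$ match the paper's Step~5.
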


\noindent This gives a characterization of the limiting $^*$-distribution of $(\mathbf X_N , \mathbf Y_N)$ and a way to compute joint $^*$-moments.

\subsubsection{The spectrum of a single heavy Wigner matrix} We deduce from Schwinger-Dyson system of equations a characterization of the spectrum of a single heavy Wigner matrix $X_N$. Denote by $(a_k)_{k\geq 1}$ its parameter. For any $K\geq 1$, we set the for formal power series $G^{\lambda}  (K)$ in $\frac 1 \lambda$
		$$G^{\lambda}  (K) := \frac 1 {\lambda^K} \sum_{n\geq 0} \frac 1 {\lambda^n} \sum_{\substack{ n_1+ \dots n_K=n \\ n_1\etc n_K\geq 1}} \Phi^{(K)}(x^{n_1}\etc x^{n_K}).$$
This quantity is simply a formal analogue of
		$$\Phi^{(K)}\big( (\lambda-x)^{-1}\etc (\lambda-x)^{-1} \big),$$

\noindent as the terms $ (\lambda-x)^{-1}$ are expended into formal power series $ (\lambda-x)^{-1} = \frac 1 \lambda \sum_{n\geq 0} \frac 1 {\lambda^n} x^n$, and the order between $\Phi^{(K)}$ and these sums are interchanged. In particular, $G^{\lambda}  (1)$ is a formal analogue for the limit of
		$$\esp \Big[ \frac 1 N \Tr \big[  (\lambda-X_N)^{-1} \big] \Big],$$
		called the Stieltjes transform of $X_N$

\begin{Prop}\label{prop:SeriesFormelles}~
\\For any $K\geq 1$, we have the equality between formal power series in $\frac 1 \lambda$
	\eqa
		G^{\lambda}  (K) & = & \frac 1 \lambda \bigg(G^\lambda(K-1)+ \sum_{k\geq 1} a_k \binom{K+k-2}{K-1}G^{\lambda}  (k) G^{\lambda}  (k+K-1) \bigg).
	\qea
These equations characterize the sequence $\big( G^\lambda(K)  \big)_{K\geq 1}$ among the set of formal power series $\big( \tilde G^\lambda(K)  \big)_{K\geq 1}$ such that for any $K\geq 1$, the valence of $\tilde G^\lambda(K) $ is larger than $K$.
\end{Prop}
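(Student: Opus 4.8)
The plan is to derive the recursion for $G^\lambda(K)$ directly from the Schwinger–Dyson system in Theorem \ref{th:SchwingerDyson}, applied to the single heavy Wigner matrix $X_N$ (so $p=1$, parameter $(a_k)_{k\geq 1}$, and there is no $\mathbf Y_N$), and then to prove the uniqueness statement by an induction on the valence. First I would specialize equation (\ref{TraceFormEq2}) to monomials of the form $P_1 = x^{n_1-1}$, $P_i = x^{n_i}$ for $i=2\etc K$ (so that $x_jP_1 = x^{n_1}$), with all $n_i\geq 1$. Since $X_N$ is the only variable, every monomial $L_i^{(j)}$ and $R_i^{(j)}$ is a power of $x$, and the factorizations prescribed in the theorem,
\begin{eqnarray*}
x^{n_1} & = & (xL_1^{(1)}x)R_1^{(1)}\dots(xL_{s_1}^{(1)}x)R_{s_1}^{(1)},\\
x^{n_k} & = & R_0^{(k)}(xL_1^{(k)}x)R_1^{(k)}\dots(xL_{s_k}^{(k)}x)R_{s_k}^{(k)},\ k=2\etc K,
\end{eqnarray*}
become compositions of the integers $n_j$ into nonnegative parts (the exponents of the $L$'s and $R$'s) with $s_1+\dots+s_K=k$ insertions of the pattern $x\,\cdot\,x$. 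Summing the right-hand side over $n_1\etc n_K\geq 1$ and dividing by $\lambda^{K+n}$ turns each generating-function block into a copy of $G^\lambda(\cdot)$: the $\Phi^{(k)}(\mathbf L)$ factor produces $G^\lambda(k)$ and the $\Phi^{(k+K-1)}(\mathbf R)$ factor produces $G^\lambda(k+K-1)$, while the purely combinatorial count of how many ways the $k$ pairs can be distributed among the $K$ monomials (respecting $s_1\geq 1$) collapses to the binomial coefficient $\binom{K+k-2}{K-1}$. The term $G^\lambda(K-1)$ on the right comes from the "$k=0$''/degenerate contribution, i.e. the identity $\sum_{n\geq 0}\lambda^{-n}\sum_{n_1+\dots+n_K=n}\Phi^{(K)}(x^{n_1}\etc x^{n_K})$ with one prefactor of $x$ already stripped; tracking the index shift carefully gives exactly the claimed $\frac1\lambda\big(G^\lambda(K-1)+\dots\big)$. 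The bookkeeping of these index shifts and the verification of the binomial identity is where I expect to spend the most care; it is essentially a careful reindexing rather than a conceptual difficulty.

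For the uniqueness part, suppose $\big(\tilde G^\lambda(K)\big)_{K\geq 1}$ is any family of formal power series in $\tfrac1\lambda$ with $\mathrm{val}\,\tilde G^\lambda(K)\geq K$ satisfying the same system, together with the convention $\tilde G^\lambda(0)=1$ (which one reads off from the $K=1$ equation, or builds in). I would compare coefficients of $\lambda^{-m}$ on both sides and induct on $m$. The point is that the right-hand side of the recursion, at order $\lambda^{-m}$, only involves coefficients of $\tilde G^\lambda(k)$ and $\tilde G^\lambda(k+K-1)$ of order strictly less than $m$ (one power of $\lambda$ is factored out in front, and the valence hypothesis forces $k\geq 1$ and $k+K-1\geq K\geq 1$ to contribute at order $\geq 1$ each, so the product of the two contributes at order $\geq 2$, hence with the external $\lambda^{-1}$ at order $\geq 3$ — in particular $< m$ once we are past the base cases). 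Hence the coefficients are determined recursively and uniquely, so $\tilde G^\lambda(K)=G^\lambda(K)$ for all $K$.

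The main obstacle, as indicated, is the combinatorial reduction in the first step: one must check that after summing the Schwinger–Dyson identity over all exponent patterns, the weight attached to splitting $k$ "$x\cdots x$'' insertions across $K$ ordered slots (with at least one in the first slot, because of the constraint $s_1\geq 1$ inherited from $x_jP_1$) is precisely $\binom{K+k-2}{K-1}$, and that the free exponents of the $R$- and $L$-blocks recombine into honest generating functions $G^\lambda$ with no leftover factors. A clean way to organize this is to interpret a term of (\ref{TraceFormEq2}) as a pair consisting of (i) a "pairing'' of the first occurrence of $x$ in the word $x^{n_1}\cdots$ with a later occurrence — this is the standard Wigner/tree recursion step, here weighted by $a_k$ because $2k$ strands of the heavy Wigner edge are glued — and (ii) the induced splitting of the remaining word into an "inside'' part (giving $G^\lambda(k)$) and an "outside'' part (giving $G^\lambda(k+K-1)$), with the $K-1$ other traces distributed among the outside. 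Once this dictionary is set up, both the recursion and the valence bookkeeping follow, and the uniqueness argument above finishes the proof.
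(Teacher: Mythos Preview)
Your proposal is correct and follows essentially the same route as the paper: specialize the Schwinger--Dyson equations of Theorem~\ref{th:SchwingerDyson} to pure powers of $x$, sum over all exponent patterns with the appropriate $\lambda$-weights, reindex so that the $\mathbf L$-block yields $G^\lambda(k)$ and the $\mathbf R$-block yields $G^\lambda(k+K-1)$, and count the compositions $s_1+\dots+s_K=k$ with $s_1\geq 1$ to obtain $\binom{K+k-2}{K-1}$; uniqueness is then a valence argument, exactly as you say. One small clarification: the $G^\lambda(K-1)$ term is not a ``$k=0$'' contribution of the Schwinger--Dyson sum (there is none) but rather the algebraic identity that the generating series for $\Phi^{(K)}\big(x(\lambda-x)^{-1},(\lambda-x)^{-1},\dots\big)$ equals $\lambda\,G^\lambda(K)-G^\lambda(K-1)$, coming from splitting off the case where the first slot carries $x^0$; the paper isolates this identity first and then applies the recursion only to the $x$-prefixed part.
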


\noindent Remind that if we assume that there exist some constants $\alpha, \beta>0$ such that for any $k\geq 1$, $a_k \leq \alpha k^\beta$, then by a result of Zakharevich \cite{ZAK}, the limiting eigenvalues distribution of $X_N$ is characterized by its moments.

\newpage
\section[The convergence of heavy Wigner and arbitrary matrices]{The convergence of heavy Wigner and arbitrary matrices}

\noindent We first remind some definitions and results in \cite{MAL12}. We recall and reformulate the two first assumptions of our main theorem.

\subsection{The distribution of traffics of matrices}\label{sec:DistrTraff}

\subsubsection{Statement of Assumptions 1 and 2}

\begin{Def}[$^*$-test graphs]~\label{def:TestGraphs}
\begin{enumerate}
	\item A $^*$-test graph in variables $\mathbf x = (x_1 \etc x_p)$ is a finite, connected, oriented graph (with possibly multiple edges and cycles) whose edges are labelled by variables $x_1 \etc x_p, x_1^* \etc x_p^*$. Formally, it consists in a triplet $T=(V,E,\gamma, \eps)$ where $(V,E)$ is a graph and $\gamma, \eps$ are maps from $E$ to $\{1\etc p\}$ and $\{1,*\}$ respectively, in such a way an edge $e$ in $E$ is labelled $x_{\gamma(e)}^{\eps(e)}$.
	\item A $^*$-test graph is said to be cyclic whenever there exists a cycle on its graph visiting each edge once in the sense of its orientation. We denote $\Gcyc$ the sets of cyclic $^*$-test graphs in indeterminates $\mathbf x$ (we keep these notation even though the notation for indeterminates can change).
\end{enumerate}
\end{Def}

\noindent Let $\mathbf Y_N = (Y_1 \etc Y_p)$ be a family of $N$ by $N$ random matrices. For any $^*$-test graph $T$, we call the trace of $T$ in $\mathbf Y_N$ the quantity
\begin{eqnarray} \label{eq:DefTraceGraphs}
		\textrm{\bf Traffic moments: }\tau_N\big[ T(\mathbf Y_N)\big] & := & \frac 1 N \sum_{\phi: V\to \{1\etc N\}} \prod_{e \in E} Y_{\gamma(e)}^{ \eps(e)}
		\big(\phi(e)\big), \ \ \ \ \ \ ~
\end{eqnarray}
	where 
	\begin{itemize}
		\item for any directed edge $e=(v_1,v_2)$, we have set $\phi(e)=(\phi(v_1),\phi(v_2))$,
		\item  and for any $N$ by $N$ matrix $M$ and any integers $n,m$ in $\{1\etc N\}$, the complex number $M(n,m)$ is the entry $(n,m)$ of $M$.
		\item $M^*$ is the conjugate transpose of the matrix $M$.
	\end{itemize}

\noindent With the same notations, we call the injective trace of $T$ in $\mathbf Y_N$ the quantity
\begin{eqnarray}
		\textrm{\bf Traffic cumulants: }\tau^0_N\big[ T(\mathbf Y_N)\big] & := & \frac 1 N \sum_{\substack{ \phi: V\to \{1\etc N\} \\ \textrm{injective} }} \prod_{e \in E} Y_{\gamma(e)}^{ \eps(e)}
		\big(\phi(e)\big). \ \ \ \ \ \ ~
\end{eqnarray}

\noindent The definition of $\tau_N^0$ is consistent with the definition of the introduction. Indeed, let $\sigma_N$ be a random permutation of $\{1\etc N\}$ independent of $\mathbf Y_N$ and $U_N$ the random permutation matrix associated to $\sigma_N$. Then, for any $^*$-test graph $T$,
	\eq
		  \tau^0_N\big[ T(\mathbf Y_N)\big] 
			& = & \frac 1 N \sum_{\substack{ \phi: V\to \{1\etc N\} \\ \textrm{injective} }} 
					\esp \bigg[ \prod_{e \in E} Y_{\gamma(e)}^{ \eps(e)} \big( \sigma_N \circ \phi(e)\big) \ \bigg| \ \mathbf Y_N \bigg] 
 \\
			& = & \frac 1 N \sum_{\substack{ \phi: V\to \{1\etc N\} \\ \textrm{injective} }} 
					\esp \bigg[ \prod_{e \in E} \Big( U_N Y_{\gamma(e)}^{ \eps(e)} U_N^* \Big)\big(   \phi(e)\big) \ \bigg| \ \mathbf Y_N \bigg] 
 \\
			& = & \frac{ (N-1)!}{ (N-|V|)!} \delta_N^0\big[T(\mathbf Y_N)\big],
	\qe

\noindent where $\delta_N^0$ is the injective density defined in \eqref{eq:InjDens}.
\\
\\One can compute the trace of $^*$-test graphs in terms of injective traces and vice versa. This is a consequence of Formula (\ref{eq:TauTauZero}) stated below and of simple facts on posets (see {\rm \cite{NS}}).

\begin{PropDef}[Trace and injective trace of $^*$-test graphs]\label{prop:InjectiveTrace}~
\\Let $T$ be a $^*$-test graph whose set of vertices is denoted by $V$. Let $\pi$ be a partition of $V$. We denote by $T^\pi$ the $^*$-test graph obtained by identification of vertices that belong to a same block of $\pi$, see Figure \ref{SimpleCycledef.pdf}.
\\
\\{\bf 1. Matrix setting:} For any family $\mathbf Y_N$ of $N\times N$ matrices and any $^*$-test graph $T$,
\begin{equation}\label{eq:TauTauZero}
	\tau_N\big[ T(\mathbf Y_N) \big] = \sum_{  \pi \in \mathcal P(V) } \tau_N^0\big[ T^\pi(\mathbf Y_N) \big],
\end{equation}
where $\mathcal P(V)$ is the set of partitions of the set of vertices $V$ of $T$. Hence, one has
\begin{equation}\label{eq:TauZeroTau}
	\tau^0_N\big[T(\mathbf Y_N) \big] = \sum_{ \pi \in \mathcal P(V)  } \tau_N\big[ T^\pi(\mathbf Y_N) \big] \times \mu_{V}(\pi),
\end{equation}
where $\mu_{V}$ is the M\"obius function of the finite poset $\mathcal P(V)$ (see {\rm \cite{NS}}).
\\
\\{\bf 2. General setting:} Given a map $\tau : \Gcyc \to \mathbb C$, we define its injective version by: for all cyclic $^*$-test graph $T$ with set of vertices denoted by $V$,
	\eqa \label{eq:DefInj}
		\tau^0[T] = \sum_{ \pi \in \mathcal P(V)  } \tau[T] \times \mu_{V}(\pi),
	\qea
so that
	\eqa \label{eq:InjStand}
		\tau[T] = \sum_{ \pi \in \mathcal P(V)  } \tau^0[T].
	\qea

\noindent This definition is then consistent with the definition of $\tau_N^0$.
\end{PropDef}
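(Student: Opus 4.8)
The plan is to establish the two matrix identities \eqref{eq:TauTauZero}--\eqref{eq:TauZeroTau} by a direct manipulation of the defining sum, and then obtain the general-setting statements as a purely formal copy of the matrix computation, plus a consistency check. Throughout I use the refinement order on the partition lattice $\mathcal P(V)$ ($\sigma\leq\pi$ iff $\sigma$ refines $\pi$), with finest partition $0_V$ and coarsest $1_V$, and the standard fact that the interval $[\sigma,1_V]$ of $\mathcal P(V)$ is isomorphic to $\mathcal P(V/\sigma)$ under $\pi\mapsto\bar\pi$, with $(T^\sigma)^{\bar\pi}=T^\pi$.

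\emph{Step 1: the expansion \eqref{eq:TauTauZero}.} Starting from \eqref{eq:DefTraceGraphs}, I partition the set of all maps $\phi:V\to\{1\etc N\}$ according to their kernel $\ker\phi\in\mathcal P(V)$, defined by $v\sim w\Leftrightarrow\phi(v)=\phi(w)$. For fixed $\pi$, the maps $\phi$ with $\ker\phi=\pi$ are exactly those factoring through the quotient map $q:V\twoheadrightarrow V/\pi$ as $\phi=\bar\phi\circ q$ with $\bar\phi:V/\pi\to\{1\etc N\}$ injective; for such $\phi$ one has $\phi(e)=\bar\phi(\bar e)$ for every edge $e$, where $\bar e$ is the corresponding edge of $T^\pi$ carrying the same label $x_{\gamma(e)}^{\eps(e)}$ (loops and multiple edges allowed). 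Hence the summand $\prod_{e\in E}Y_{\gamma(e)}^{\eps(e)}(\phi(e))$ equals $\prod_{\bar e}Y_{\gamma(\bar e)}^{\eps(\bar e)}(\bar\phi(\bar e))$, and summing over injective $\bar\phi$ gives exactly $\tau_N^0[T^\pi(\mathbf Y_N)]$. Summing over $\pi\in\mathcal P(V)$ yields \eqref{eq:TauTauZero}.

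\emph{Step 2: Möbius inversion, \eqref{eq:TauZeroTau}.} Applying Step 1 to $T^\sigma$ for each $\sigma\in\mathcal P(V)$ and using $(T^\sigma)^{\bar\pi}=T^\pi$ gives
\[
\tau_N\big[T^\sigma(\mathbf Y_N)\big]=\sum_{\pi\geq\sigma}\tau_N^0\big[T^\pi(\mathbf Y_N)\big].
\]
Writing $f(\sigma)=\tau_N[T^\sigma(\mathbf Y_N)]$ and $g(\sigma)=\tau_N^0[T^\sigma(\mathbf Y_N)]$, this is the relation $f=\zeta g$ in the incidence algebra of $\mathcal P(V)$, so $g=\mu f$, that is $g(\sigma)=\sum_{\pi\geq\sigma}\mu(\sigma,\pi)f(\pi)$; specializing $\sigma=0_V$ and setting $\mu_V(\pi):=\mu(0_V,\pi)$ gives \eqref{eq:TauZeroTau}. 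The claimed ``one can compute $\tau_N$ from $\tau_N^0$ and vice versa'' is precisely this pair of mutually inverse linear relations.

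\emph{Step 3: general setting and consistency.} For $\tau:\Gcyc\to\mathbb C$, note first that $T^\pi$ is again a cyclic $^*$-test graph whenever $T$ is: identifying vertices keeps the graph connected and turns a cycle visiting every edge once into a closed walk still visiting every edge once. Thus the right-hand side of \eqref{eq:DefInj} is well defined on $\Gcyc$, and \eqref{eq:InjStand} is exactly the Möbius inverse of \eqref{eq:DefInj}: running the incidence-algebra computation of Step 2 formally for the functions $\sigma\mapsto\tau[T^\sigma]$ and $\sigma\mapsto\tau^0[T^\sigma]$ (using again $(T^\sigma)^{\bar\pi}=T^\pi$ to get $\tau^0[T^\sigma]=\sum_{\pi\geq\sigma}\mu(\sigma,\pi)\tau[T^\pi]$ for all $\sigma$, then summing upward) yields $\tau[T]=\sum_{\pi\in\mathcal P(V)}\tau^0[T^\pi]$. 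Consistency with $\tau_N^0$ is immediate: feeding $\tau=\tau_N[\,\cdot\,(\mathbf Y_N)]$ into \eqref{eq:DefInj} reproduces, by Step 2, the injective trace $\tau_N^0$.

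The only genuinely delicate point is the edge bookkeeping in Step 1: one must check that identifying vertices along the blocks of $\ker\phi$ neither merges nor splits edges unintentionally, that labels are carried along unchanged, and that an edge internal to a block becomes a loop of $T^\pi$ consistently — after which the factorization $\phi=\bar\phi\circ q$ makes the two products of matrix entries agree term by term. Everything else is the textbook Möbius inversion on $\mathcal P(V)$ together with the interval identification $[\sigma,1_V]\cong\mathcal P(V/\sigma)$ (cf. \cite{NS}).
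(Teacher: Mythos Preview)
Your proof is correct and follows exactly the approach the paper gestures at: the paper merely says that formula \eqref{eq:TauTauZero} together with ``simple facts on posets'' from \cite{NS} gives the result, without spelling out either the kernel-decomposition argument for \eqref{eq:TauTauZero} or the M\"obius inversion for \eqref{eq:TauZeroTau}. You have filled in precisely those two steps in the expected way, and your Step~3 makes explicit the closure of $\Gcyc$ under $T\mapsto T^\pi$ and the purely formal nature of the general-setting inversion, which the paper leaves implicit.
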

		
	\begin{figure}[!h]
		\parbox{7.5cm}{ \includegraphics[height=25mm]{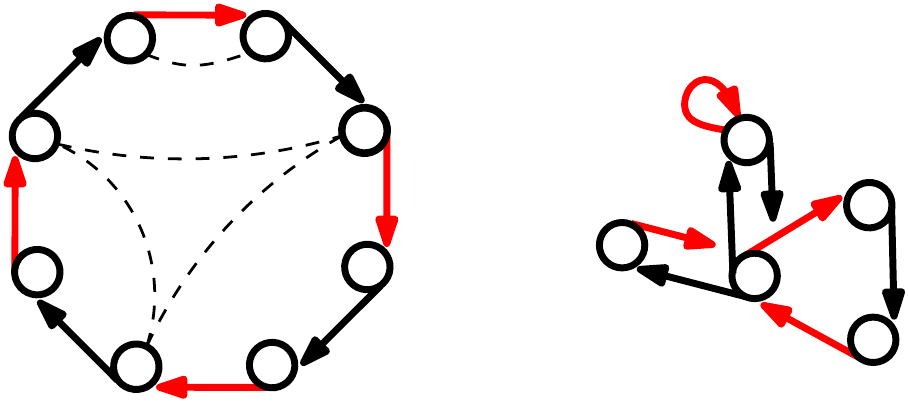}}
		\parbox{6cm}{ \caption{Left: a $^*$-test graph $T$ (labels are replace by colors for simplicity) and a partition $\pi$ of its vertices, represented by dashed lines joining vertices in a same block. Right: the $^*$-test graph $T_{\pi}$.} 
			\label{SimpleCycledef.pdf}
		}
	\end{figure}
	
\noindent The two first assumptions we have stated in the introduction can be reformulated in terms of the non-injective trace.

\begin{Ass}[Convergence in distribution of traffics]~\label{ConvTraff}
\\The family of $N$ by $N$ random matrices $\mathbf Y_N$ has a (mean) limiting distribution of traffics $\tau$ on $\Gcyc$, that is: the entries of $\mathbf Y_N$ have finite moments of any order and for any cyclic $^*$-test graph $T$, 
	\eqa
		\tau[T] : = \Nlim \esp\Big[ \tau_N \big[ T(\mathbf Y_N) \big] \Big] \textrm{ exists.}
	\qea
	
\noindent Equivalently, by Proposition/Definition \ref{prop:InjectiveTrace}, one can replace the trace by the injective one.
\end{Ass}

\begin{Ass}[Concentration]~\label{Ass2}
\\The family of $N$ by $N$ random matrices $\mathbf Y_N$ satisfies for any cyclic $^*$-test graphs $T_1\etc T_n$, 
	\eqa\label{eq:ConvEsp}
		\esp\Big[ \tau_N \big[ T_1(\mathbf Y_N) \big] \dots  \tau_N \big[ T_n(\mathbf Y_N) \big]  \Big]    \limN   \tau[T_1] \dots \tau[T_n].
	\qea
	
\noindent Equivalently, by Proposition/Definition \ref{prop:InjectiveTrace}, one can replace the trace by the injective one.
\end{Ass}

\noindent It is important to have in mind that the $^*$-distribution of a family $\mathbf Y_N$ can be written explicitly in terms of its distribution of traffics $T \mapsto \tau_N[T]$.

\begin{Prop}[Properties of the convergence in distribution of traffics on $\Gcyc$]~
\begin{enumerate}
	\item If almost surely the matrices of $\mathbf Y_N$ are uniformly bounded in operator norm, then up to a subsequence $\mathbf Y_N$ has a limiting distribution of traffics.
	\item The convergence in distribution of traffics on $\Gcyc$ of a family $\mathbf Y_N$ of matrices implies its convergence in $^*$-distribution.
	\item It also implies the convergence in distribution of traffics on $\Gcyc$ of any family of $^*$-polynomials in $\mathbf Y_N$.
\end{enumerate}
\end{Prop}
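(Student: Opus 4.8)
\medskip
\noindent\textbf{Proof strategy.} All three statements rest on one principle that I would make explicit first: for a \emph{cyclic} $^*$-test graph $T$, the traffic moment $\tau_N[T(\mathbf Y_N)]$ is only a mild generalization of a normalized trace $\frac1N\Tr$ of a word in the entries of $\mathbf Y_N$ and their conjugates. Concretely, for statement (2), I would associate to a $^*$-monomial $P=y_{i_1}^{\eps_1}\dots y_{i_L}^{\eps_L}$ the ``directed cycle'' $^*$-test graph $C_P$ with vertices $v_1,\dots,v_L$, edges $(v_1,v_2),\dots,(v_{L-1},v_L),(v_L,v_1)$, the $j$-th labelled $x_{i_j}^{\eps_j}$; then $C_P$ is cyclic and, reading off the definition \eqref{eq:DefTraceGraphs}, $\tau_N[C_P(\mathbf Y_N)]=\frac1N\Tr[P(\mathbf Y_N)]$ for every $N$. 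Hence Assumption~\ref{ConvTraff} forces $\esp[\frac1N\Tr P(\mathbf Y_N)]$ to converge for all $^*$-monomials $P$, and by linearity $\mathbf Y_N$ converges in $^*$-distribution, with limit $P\mapsto\tau[C_P]$ on monomials.

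For statement (3), the plan is to reduce a traffic moment of a family of polynomials to a finite linear combination of traffic moments of $\mathbf Y_N$ itself. Fix $^*$-polynomials $Q_1,\dots,Q_r$ in $\mathbf y$, each a finite combination of $^*$-monomials, and put $\mathbf Q_N=(Q_1(\mathbf Y_N),\dots,Q_r(\mathbf Y_N))$. Given a cyclic $^*$-test graph $S$ in the indeterminates $q_1,\dots,q_r$, I would expand $\tau_N[S(\mathbf Q_N)]$ by multilinearity of \eqref{eq:DefTraceGraphs} in the matrix labels: evaluating an edge of $S$ labelled $q_i$ (resp.\ $q_i^*$) on a monomial of $Q_i$ (resp.\ of the adjoint polynomial $Q_i^*$) is, after introducing the fresh summation indices, the same as replacing that single directed edge by a directed path carrying the successive letters of the monomial, with new internal vertices. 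Since $S$ admits a closed walk using each of its edges exactly once, and substituting each edge by a directed path with the same endpoints preserves connectedness and the existence of such a closed walk, every graph $T$ obtained this way is again a cyclic $^*$-test graph in $\mathbf y$. Thus $\tau_N[S(\mathbf Q_N)]$ is a finite $\mathbb C$-linear combination of traffic moments $\tau_N[T(\mathbf Y_N)]$, $T\in\Gcyc$; expectations and Assumption~\ref{ConvTraff} then give convergence of $\esp\,\tau_N[S(\mathbf Q_N)]$, and since the entries of $\mathbf Q_N$ have all moments (being polynomials in those of $\mathbf Y_N$, by H\"older), $\mathbf Q_N$ converges in distribution of traffics on $\Gcyc$.

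For statement (1), I would first establish the uniform estimate $|\tau_N[T(\mathbf Y_N)]|\le C^{|E(T)|}$ valid for every cyclic $^*$-test graph $T$, where $C$ is an almost sure uniform bound on the operator norms of the matrices of $\mathbf Y_N$; then statement (1) follows, because each sequence $(\esp\,\tau_N[T(\mathbf Y_N)])_N$ lies in a fixed compact disc of $\mathbb C$, and since $\Gcyc$ is countable a Cantor diagonal extraction yields a subsequence along which $\esp\,\tau_N[T(\mathbf Y_N)]$ converges for every $T\in\Gcyc$, i.e.\ a limiting distribution of traffics. To prove the estimate I would order the edges of $T$ along a closed walk $e_1,\dots,e_L$ with $e_j$ from $v_j$ to $v_{j+1}$ (indices mod $L$) and set $A_j=Y_{\gamma(e_j)}^{\eps(e_j)}$: if the $v_j$ are pairwise distinct then $\tau_N[T(\mathbf Y_N)]=\frac1N\Tr[A_1\cdots A_L]$ and submultiplicativity of the operator norm gives the bound; if a revisited vertex $w$ is a cut vertex, summing over $\phi(w)$ first factors the sum over the sub-loops based at $w$ --- each a strictly smaller cyclic $^*$-test graph --- and bounding one diagonal entry by an operator norm per loop closes an induction on $|E(T)|$; the remaining $2$-connected configurations are dealt with by the same reorganization plus a Cauchy--Schwarz estimate on rows and columns. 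The hard part is exactly this estimate: it genuinely uses that $T$ is cyclic, since for a non-cyclic $^*$-test graph the traffic moment of operator-norm-bounded matrices can grow like a positive power of $N$, so the point is a structural fact about closed Eulerian walks and the delicate bookkeeping is how such a walk revisits its vertices; this is one of the basic computations carried out in \cite{MAL12}. Once it is available, statements (2) and (3) are formal consequences of the slogan ``cyclic traffic moment $=\frac1N\Tr$ of a word''.
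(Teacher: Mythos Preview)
Your arguments for (2) and (3) are essentially the paper's own: associate to a $^*$-monomial its directed-cycle $^*$-test graph, and for (3) replace each edge labelled by a monomial by a directed path carrying its letters, observing that an Eulerian circuit survives this substitution. Nothing to add there.

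For (1) the paper takes a shorter route than you do. It simply invokes the Mingo--Speicher estimate (Theorem~\ref{Th:MS}), which for any $^*$-test graph $T$ gives
\[
\big|\tau_N[T(\mathbf Y_N)]\big|\ \le\ N^{\mathfrak r(T)/2-1}\prod_{e\in E}\|Y_{\gamma(e)}^{(N)}\|.
\]
A cyclic $^*$-test graph has no cutting edge: the Eulerian circuit forces every vertex to have equal in- and out-degree, hence even total degree in the underlying multigraph, and a bridge would leave a component with a single odd-degree vertex, contradicting the handshake lemma. Thus its tree of two-edge connected components is trivial, $\mathfrak r(T)=2$, and the bound collapses to $\prod_e\|Y_{\gamma(e)}^{(N)}\|\le C^{|E|}$. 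Your diagonal extraction then finishes exactly as you wrote.

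Your direct attack---cut-vertex factorization plus Cauchy--Schwarz for the $2$-connected pieces---is really a sketch of how one \emph{proves} the Mingo--Speicher bound (or its cyclic special case) rather than an alternative to it; the $2$-connected step you leave vague is precisely where the work sits. Since the paper already records Theorem~\ref{Th:MS} for later use in Assumption~\ref{TechAss}, citing it here is both cleaner and free.
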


\noindent The first point is a consequence of a result of Mingo and Speicher recalled in Theorem \ref{Th:MS} below. If $\mathbf Y_N$ has a limiting distribution of traffics $\tau$, the third point of the Proposition gives a sense of $\tau[T]$, where $T$ is a $^*$-test graph whose edges are labelled by polynomials in the indeterminates.

\begin{proof}[Proof of 2.] 
Let $P$ be a $^*$-monomial of the form
	\eq
		P = x_{\gamma(1)}^{\eps(1)} \dots x_{\gamma(K)}^{\eps(K)},
	\qe

\noindent where $K\geq 0$, $\gamma: \{1\etc K\} \to \{1\etc p\}$ and $\eps: \{1 \etc K\} \to \{1,*\}$. Let $T_P $ be the cyclic $^*$-test graph whose vertices are $1 \etc K$ and whose edges are $(1,2) \etc (K-1,K), (K,1)$, the edge $(k,k+1)$ being labelled $x_{\gamma(k)}^{\eps(k)}$ (with indices modulo $K$). Then, one has
	\eq
		\frac 1 N \Tr \big[ P(\mathbf Y_N) \big] = \tau_N \big[ T_P(\mathbf Y_N) \big],
	\qe

\noindent so that the $^*$-distribution of $\mathbf Y_N$ on $^*$-monomials is the restriction of the distribution of traffics of $\mathbf Y_N$ on a subset of cyclic $^*$-test graphs.
\end{proof}

\begin{proof}[Proof of 3.] Let $P_1 \etc P_q$ be $^*$-monomials and define $\mathbf Z_N = \big(P_1(\mathbf Y_N) \etc P_q(\mathbf Y_N) \big)$. For any $n=1\etc q$, we write
		$$P_n = x_{\gamma_{n}(1)}^{\eps_{n}(1)}   \dots   x_{\gamma_{n}(K_n)}^{\eps_{k}(K_n)},$$	
where $K_n\geq 0$, $\gamma_n: \{1\etc K_n\} \to \{1\etc p\}$ and $\eps_{n}:\{1\etc K_n\} \to\{1,*\}$.
\\
\\Then, for any $^*$-test graph $T$, one has $\tau_N\big[ T(\mathbf Z_N) \big] = \tau_N\big[ \tilde T(\mathbf Y_N) \big]$, where $\tilde T$ is the cyclic $^*$-test graph obtained from $T$ by replacing for any $n=1\etc q$ and $\eps$ in $\{1,*\}$, the edges labelled $x_n^\eps$ by a consecutive sequence of edges $e_1 \etc e_{K_n}$, where $e_j$ is labelled $(x_{\gamma_n(j)}^{\eps_n(j)})^\eps$, $j=1\etc K_n$ (with the convention $(x_j^*)^*=x_j$). The convergence of $\mathbf Y_N$ implies the convergence of $\mathbf Z_N$. We get the expected result by multi-linearity.
\end{proof}

\subsubsection{Statement of Assumption 3}
\label{sec:MS}

\noindent Recall the definitions of Mingo and Speicher \cite{MS09}.

\begin{Def}[Tree of two-edges connected components of a $^*$-test graph]~
\begin{enumerate}
	\item A cutting edge of a $^*$-test graph is an edge whose removal would result into disconnected subgraphs. A two-edge connected $^*$-test graph is a $^*$-test graph without cutting edges. A two-edge connected component of a $^*$-test graph is a subgraph which is two-edge connected and cannot be enlarged to a bigger two-edge connected subgraph.
	\item Let $T$ a $^*$-test graph. Its tree of two-edge connected components $\mathfrak T(T)$ is the directed graph defined as follow. The vertices of $\mathfrak T(T)$ consists in the two-edge connected components of $T$. Two distinct vertices of $\mathfrak T(T)$ are connected by an edge if there is a cutting edge between vertices from the two corresponding components in $T$. Hence $\mathfrak T(T)$ is always a tree, i.e. a connected graph without cycles.
	\item A tree is trivial if it consists in only one vertex. A leaf of a non-trivial tree is a vertex which meets only one edge. By convention, we say that the trivial tree has two leafs.
	\item For any $^*$-test graph $T$, we denote by $\mathfrak r(T)$ the number of leaves of $\mathfrak T(T)$.
\end{enumerate}
\end{Def}

\noindent Mingo and Speicher have proved in \cite{MS09} an optimal estimate, reformulated in the language of $^*$-test graphs as follow.

\begin{Th}[Sharp bounds for the trace of test graphs in matrices, {\cite{MS09}}]\label{Th:MS}~
\\Let $T$ be a $^*$-test graph. Let $\mathfrak T(T)$ be its tree of two-edge connected components and denote by $\mathfrak r(T)$ its number of leaves. Then, for any family $\mathbf Y_N$ of $N$ by $N$ matrices, 
\begin{equation}
		\Big| \tau_N \big[ T(\mathbf Y_N) \big] \Big| \leq N ^{ {\mathfrak r(T)}/2-1} \prod_{e\in E} \| Y_{\gamma(e)}\toN\|,
\end{equation}
where $\| \cdot \|$ stands for the operator norm. Moreover, there exists matrices for which this bound is reached.
\end{Th}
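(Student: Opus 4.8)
The plan is to establish the bound by induction, the engine being the Cauchy--Schwarz inequality applied along cutting edges, with the exponent of $N$ controlled through the combinatorics of $\mathfrak{T}(T)$; the sharpness is then read off by choosing the matrices so that each Cauchy--Schwarz step is an equality. Write $S_N[T(\mathbf Y_N)] = N\,\tau_N[T(\mathbf Y_N)]$ for the unnormalised sum over all $\phi : V \to \{1\etc N\}$, and $\|\cdot\|$ for the operator norm, $\|\cdot\|_2,\|\cdot\|_\infty$ for the $\ell^2$ and $\ell^\infty$ norms on $\C^N$. Suppose first that $T$ has a cutting edge $e=(v,w)$ carrying the label $Y_{\gamma(e)}^{\eps(e)}$, so that $T$ minus $e$ is the disjoint union $T_1 \sqcup T_2$ with $v \in T_1$, $w \in T_2$. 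Freezing the image of $v$ in the partial sum over $T_1$ and the image of $w$ in that over $T_2$ produces vectors $s_1,s_2 \in \C^N$ for which $S_N[T(\mathbf Y_N)] = \langle \overline{s_1},\, Y_{\gamma(e)}^{\eps(e)} s_2\rangle$, hence $|S_N[T(\mathbf Y_N)]| \leq \|s_1\|_2\,\|Y_{\gamma(e)}\toN\|\,\|s_2\|_2$. The key point is that $\|s_i\|_2^2 = S_N[D(T_i)(\mathbf Y_N)]$, where $D(T_i)$ is the $^*$-test graph obtained by gluing $T_i$ to a conjugate mirror copy of itself along its marked vertex; this reduces the estimate for $T$ to estimates for the two doubled graphs. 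The combinatorial input is the leaf count under doubling: $\mathfrak{T}(D(T_i))$ is two copies of $\mathfrak{T}(T_i)$ identified at the two-edge connected component of the marked vertex, and a short case analysis gives $\mathfrak r(D(T_i)) = 2\mathfrak r(T_i) - 2\cdot\mathbf 1[\text{that component is a leaf of }\mathfrak{T}(T_i)]$; summing the two cases one checks $\tfrac12\mathfrak r(D(T_1)) + \tfrac12\mathfrak r(D(T_2)) = \mathfrak r(T)$, which is exactly what propagates the bound through the displayed inequality and yields $|\tau_N[T(\mathbf Y_N)]| \leq N^{\mathfrak r(T)/2-1}\prod_{e}\|Y_{\gamma(e)}\toN\|$. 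Termination ($D(T_i)$ is larger than $T$) is arranged by always choosing $e$ incident to a leaf of $\mathfrak{T}(T)$: then $T_1$ is two-edge connected and $D(T_1)$ falls under the base case, while $T_2$ has strictly fewer cutting edges, so one induces on the number of edges of $\mathfrak{T}(T)$.

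It remains to treat the two-edge connected base case, where $\mathfrak{T}(T)$ is trivial, $\mathfrak r(T)=2$, and the claim reads $|\tau_N[T(\mathbf Y_N)]| \leq \prod_e \|Y_{\gamma(e)}\toN\|$. When $T$ is a single cycle this is precisely $|\tfrac1N\Tr[M_1\cdots M_k]| \leq \|M_1\|\cdots\|M_k\|$, each $M_j$ being $Y_{\gamma(e_j)}\toN$ up to a transposition or conjugation prescribed by the orientation of $e_j$ and by $\eps(e_j)$, operations that leave the operator norm invariant. A general two-edge connected $^*$-test graph is reduced to this by an ear decomposition: one contracts an ear to a single edge labelled by the product of the matrices along it (submultiplicativity of $\|\cdot\|$); one merges parallel edges between two vertices into one edge labelled by their Hadamard product, using $\|A\circ B\|\leq\|A\|\,\|B\|$ (proved by compressing $A\otimes B$ to the diagonal copy of $\C^N$); and one splits at a remaining cut vertex $v$ via $S_N^{(v=a)}[T]=S_N^{(v=a)}[T']\,S_N^{(v=a)}[T'']$ together with $\|x\circ y\|_2 \leq \|x\|_\infty\|y\|_2$ and the auxiliary bound $\|S_N^{(v=\cdot)}[T']\|_\infty \leq \prod\|Y\|$ for the two-vertex connected pieces. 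In every case the configuration collapses to a trace of a product, possibly of Hadamard products, of the given matrices, at which point the two submultiplicativity properties close the estimate.

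For sharpness one takes every label of $T$ to be a matrix of the form $\mathbf 1 e_1^{t}$, $e_1 \mathbf 1^{t}$, or $N^{-1/2}$ times the all-ones matrix, the choice depending on the orientation of the edge and on the local shape of $T$ at its endpoints, arranged so that each partial-trace vector $s_i$ occurring in the induction is a scalar multiple of a coordinate vector or of $\mathbf 1$; then every Cauchy--Schwarz inequality used above becomes an equality, and counting the exponents of $N$ produced by the now exact recursion recovers exactly $N^{\mathfrak r(T)/2-1}\prod_e\|Y_{\gamma(e)}\toN\|$. The main obstacle I anticipate is the two-edge connected base case, together with making the induction terminate despite the doubling step enlarging the graph; peeling $\mathfrak{T}(T)$ one leaf at a time and reducing two-edge connected graphs via ears are the two devices that resolve these, and I would follow Mingo and Speicher \cite{MS09} for the details.
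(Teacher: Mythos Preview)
The paper does not give its own proof of this theorem: it is quoted verbatim from Mingo and Speicher \cite{MS09} and used as a black box (to justify Assumption~3 and the tightness lemma inside the proof of Theorem~\ref{Th:AsympFree}). So there is no in-paper argument to compare against.

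That said, your sketch is essentially the Mingo--Speicher strategy you yourself cite at the end: Cauchy--Schwarz across a cutting edge, replacing each half by its ``doubled'' graph $D(T_i)$, and tracking the leaf count of $\mathfrak T(\cdot)$ through the doubling. Your leaf identity $\tfrac12\mathfrak r(D(T_1))+\tfrac12\mathfrak r(D(T_2))=\mathfrak r(T)$ is the right bookkeeping, and peeling off a leaf of $\mathfrak T(T)$ is exactly how one forces the induction to terminate. The honest gap you flag is the correct one: the two-edge connected base case is where the real work lies, and your ear/Hadamard/cut-vertex reduction is only a heuristic outline (for instance, merging parallel edges into a Hadamard product and then invoking $\|A\circ B\|\le\|A\|\,\|B\|$ does not by itself handle an arbitrary two-edge connected graph; one needs the full graph-sum machinery of \cite{MS09}). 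Since you already point to \cite{MS09} for those details, your proposal is in line with how the paper treats the result.
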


\noindent Their result sheds light on the quantity $\mathfrak r (T)$ which turns out to plays an important role in the asymptotic traffic-freeness theorem we prove in this article.

\begin{Ass}[Control of growth]~\label{TechAss}
\\The family of $N$ by $N$ random matrices $\mathbf Y_N$ satisfies: for any (non cyclic) $^*$-test graphs $T_1 \etc T_n$, there exists a constant $C$ such that
	\eqa
	 \esp\Big[ \tau_N \big[ T_1(\mathbf Y_N) \big] \dots  \tau_N \big[ T_n(\mathbf Y_N) \big] \Big] = O \Big( N ^{  {\mathfrak r(T_1)}/2-1} \dots  N ^{  {\mathfrak r(T_n)}/2-1} \Big).
	\qea
	
\noindent Equivalently, by Proposition/Definition \ref{prop:InjectiveTrace}, one can replace the trace by the injective one.
\end{Ass}

\subsubsection{Proof of Proposition 2.2}

\noindent We go back to the proof of Proposition \ref{Prop:Example}, which tells situations where assumptions are satisfied.

\begin{proof}[Proof of 1.] This claim is a consequence of Mingo and Speicher Theorem \ref{Th:MS}.
\end{proof}

\begin{proof}[Proof of 2.] We just remark that for any $^*$-test graph $T$ there exists a polynomial $P$ such that $\tau_N\big[T(\mathbf D_N) \big] = \frac 1 N \Tr \big[P(\mathbf D_N) \big]$.
\end{proof}

\begin{proof}[Proof of 3.]
\noindent By \cite{MAL12}, the convergence in distribution of traffics of $\mathbf Y_N$ and the weak local convergence of a $\mathbf G_N$ are equivalent. Moreover, for any $^*$-test graph $T$, there exists a cyclic $^*$-test graph $\tilde T$ such that $\tau_N\big[T(\mathbf G_N)\big] = \tau_N\big[\tilde T(\mathbf G_N) \big]$. Hence, Assumption 2 implies Assumption 3.
\end{proof}

\subsection{The traffic-asymptotic freeness of large random matrices}

\subsubsection{Definition}\label{sec:DefFree}
	
\noindent Recall the definition from \cite{MAL12} (in an slight different formulation).

\begin{Def}[Traffic-asymptotic freeness]~\label{def:FreeProdGraphs}
\begin{enumerate}
	\item {\bf Free product of $^*$-test graphs:} Let $\mathbf x_1 \etc \mathbf x_p$ be families of variables. A $^*$-test graphs $T$ with labels in $\mathbf x = (\mathbf x_1 \etc \mathbf x_p)$ is said to be a free product in $\mathbf x_1 \etc \mathbf x_p$ whenever it has the following structure (see Figure \ref{fig:DefFalseFreeness}). Denote by $T_1 \etc T_K$ the connected components of $T$ that are labelled with variables in a same family. Consider the undirected graph $G_{red}(T)$ defined by:
\begin{itemize}
	\item the vertices of $G_{red}(T)$ are $T_1 \etc T_K$ with in addition the vertices $v_1 \etc v_L$ of $T$ that are common to many components $T_1 \etc T_K$,
	\item there is an edge between $T_i$ and $v_j$ if $v_j$ is a vertex of $T_i$, $i=1\etc K$, $j=1\etc L$.
\end{itemize}

\noindent Then, $T$ is a free product in $\mathbf x_1 \etc \mathbf x_p$ whenever $G_{red}(T)$ is a tree.
	\item {\bf Traffic-asymptotic freeness:} Let $\mathbf X_1 \etc \mathbf X_p$ be families of $N$ by $N$ random matrices, whose entries have all their moments, and having jointly a mean limiting distribution of traffics on $\Gcyc$, that is: for any cyclic $^*$-test graphs $T$ in variables $\mathbf x_1 \etc \mathbf x_p$, 
	\eq
		\tau[T] := \Nlim \esp \Big[ \tau_NÊ\big[ T(\mathbf X_1 \etc \mathbf X_p) \big] \Big] \textrm{ exists.}
	\qe
	
\noindent We say that $\mathbf X_1 \etc \mathbf X_p$ are asymptotically traffic-free whenever: for any cyclic $^*$-test graphs $T$ in variables $\mathbf x_1 \etc \mathbf x_p$:
\begin{itemize}
	\item if $T$ is a free product in $\mathbf x_1 \etc \mathbf x_p$, then
		\eq
			\tau^0[T] =  \prod_{\tilde T}\tau^0[\tilde T],
		\qe
	where the product is over the connected components of $T$ that are labelled with variables in a same family.
	\item otherwise, $\tau^0[T] =0.$
\end{itemize}
	
\end{enumerate}
\end{Def}
	
	\begin{figure}[!h]
		\parbox{10.5cm}{ \includegraphics[width=100mm]{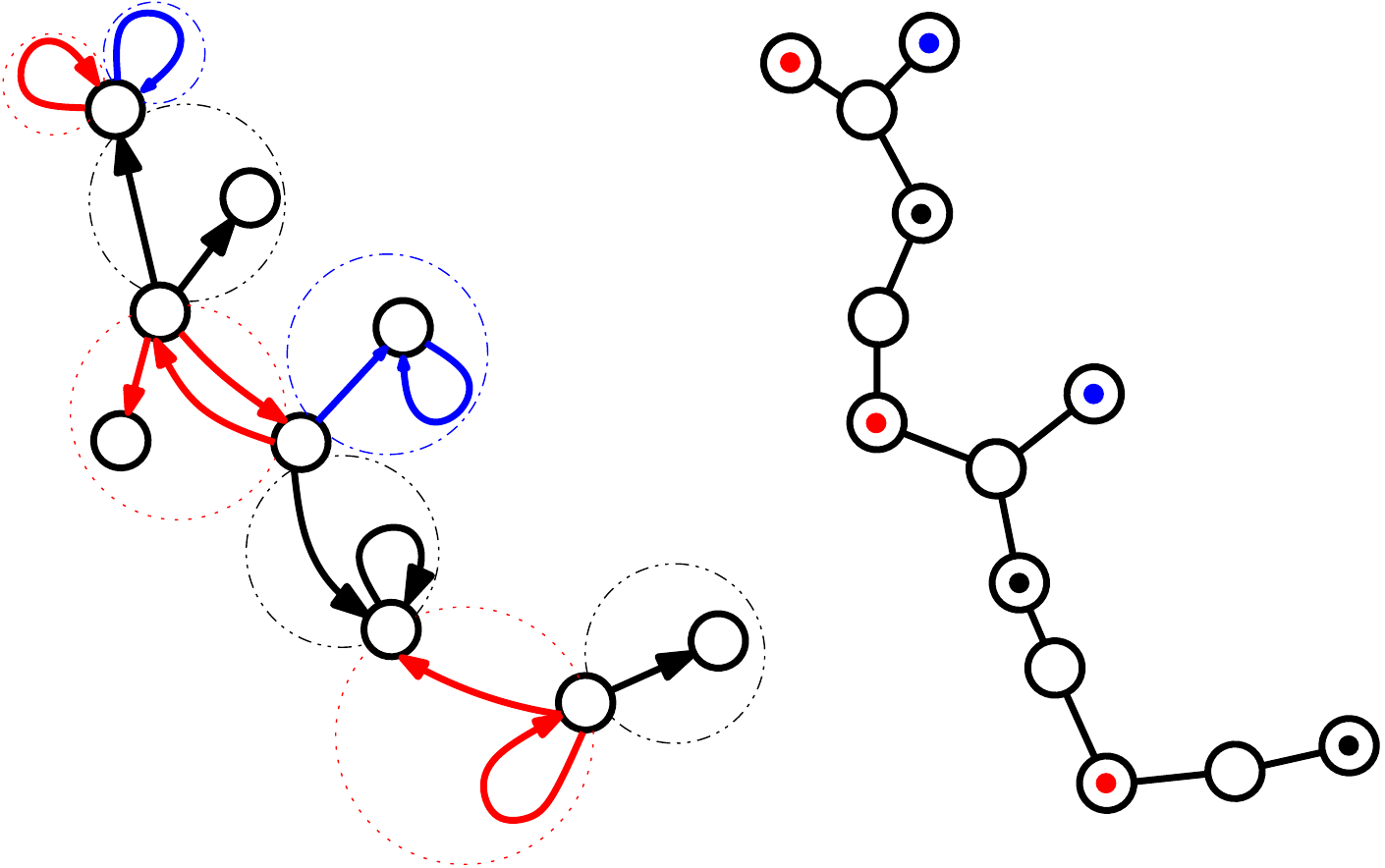}}
		\parbox{4cm}{ \caption{Left: a $^*$-test graph $T$ free product in three families, represented by different colors. The non trivial connected components labelled by a same family of variables are encircled. Right: the graph $G_{red}(T)$. For convenience, the vertices corresponding to components of $T$ are marked with a dot of the corresponding color.}
		\label{fig:DefFalseFreeness}
		}
	\end{figure}

\subsubsection{A traffic-asymptotic freeness theorem on $\Gcyc$}\label{sec:AsymFreeTh}

\begin{Th}[The asymptotic freeness of $\mathbf X_1\toN \etc \mathbf X_p\toN$ on $\Gcyc$]~\label{Th:AsympFree}
\\Let $\mathbf {X}_1\toN \etc \mathbf {X}_p\toN$ be families of $N$ by $N$ random matrices. Assume the following.
\begin{enumerate}
	\item {\bf Statistical independence:}
	\\The families $\mathbf {X}_1\toN \etc \mathbf {X}_p\toN$ are statistically independent.
	
	\item {\bf Joint invariance by permutation:}
	\\For any permutation matrix $U_N$, and any $j=1\etc p$ except possibly one,
			\begin{equation}
				U_N \mathbf {X}_j\toN U_N^*  \overset{\mathcal L}= \mathbf {X}_j\toN.
			\end{equation}

	\item {\bf Convergence in mean distribution of traffics on $\Gcyc$:}
	\\For any $j=1\etc p$, the family $\mathbf X_j^{(N)}$ satisfies Assumption \ref{ConvTraff}.
		
	\item {\bf Technical condition:}
	\\For any $j=1\etc p$, the family $\mathbf X_j^{(N)}$ satisfies Assumptions 2 and \ref{TechAss}.
	
\end{enumerate}

\noindent Then, the joint family $ (\mathbf X_1\toN \etc \mathbf X_p\toN)$ has a mean limiting distribution of traffics on $\Gcyc$. The families of matrices $\mathbf X_1\toN \etc \mathbf X_p\toN$ are asymptotically traffic-free.
\end{Th}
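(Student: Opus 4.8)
The plan is to analyze the injective trace $\esp[\tau^0_N[T(\mathbf X_1^{(N)}\etc\mathbf X_p^{(N)})]]$ of an arbitrary cyclic $^*$-test graph $T$ and show that, in the limit, it vanishes unless $T$ is a free product in $\mathbf x_1\etc\mathbf x_p$, in which case it factorizes over the monochromatic connected components. First I would decompose $T$ along its colors: write $T$ as the union of its maximal monochromatic subgraphs $T_1\etc T_K$ (each labelled in a single family $\mathbf x_{j_i}$), glued at a set of shared vertices. Using the definition of the injective trace as a sum over injective labellings $\phi:V\to\{1\etc N\}$ and the statistical independence of the families, the expectation of the product over edges factorizes as a product of conditional expectations, one per monochromatic component, for each \emph{fixed} choice of $\phi$. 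The key bookkeeping step is to pass from the fixed injective sum over $V(T)$ to independent injective sums over each $V(T_i)$: summing over $\phi$ restricted to a component while the shared vertices are frozen, and then re-expanding freezing via the M\"obius inversion of Proposition/Definition \ref{prop:InjectiveTrace} (\eqref{eq:TauTauZero}--\eqref{eq:TauZeroTau}), so that each monochromatic piece is itself expressed through its own injective traces $\tau^0_N[\,\cdot\,]$, which converge by Assumption \ref{ConvTraff} applied componentwise.

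Next I would turn to the combinatorics encoded by the reduced graph $G_{red}(T)$. The normalization $\tfrac1N$ in $\tau_N$ versus the number of free summation indices produces a power of $N$ whose exponent is governed by the cycle structure of $G_{red}(T)$: gluing two monochromatic components along a shared vertex creates a constraint, and each independent cycle in $G_{red}(T)$ costs a factor that, combined with the permutation invariance, forces the corresponding correlation to be subleading. Concretely, when $G_{red}(T)$ is a tree, the shared vertices can be summed freely and the leading order is exactly $\prod_i \tau^0[\tilde T_i]$ over the monochromatic components, reproducing the traffic-free product formula; when $G_{red}(T)$ contains a cycle, one of the monochromatic factors is a \emph{non-cyclic} $^*$-test graph whose injective trace, by Theorem \ref{Th:MS} and Assumption \ref{TechAss}, is $O(N^{\mathfrak r/2-1})$ with $\mathfrak r<2$, hence vanishes. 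Here the permutation invariance hypothesis is essential: for a permutation-invariant family, the expectation of the injective trace of a non-connected or non-cyclic test graph decouples into a product over connected pieces up to lower order (this is the "singleton"/factorization phenomenon for permutation-invariant matrices), and that is precisely what kills every non-tree configuration of $G_{red}(T)$. The single exceptional family allowed to be non-invariant contributes at most one monochromatic color, so at most one component per connected piece is "rigid," and the tree argument still closes.

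Finally, Assumption \ref{Ass2} (concentration) is used to replace $\esp[\tau_N[\cdot]\cdots\tau_N[\cdot]]$ by products of expectations throughout, so that the whole argument can be carried out at the level of mean traffic moments; this is what legitimizes treating the components independently inside the expectation and what guarantees that the limiting functional $\tau$ on $(\mathbf X_1^{(N)}\etc\mathbf X_p^{(N)})$ exists (not merely along subsequences). The existence of the limiting distribution of traffics then follows because every $\esp[\tau^0_N[T]]$ is, by the above, a finite sum of products of quantities each converging by Assumption \ref{ConvTraff}, and asymptotic traffic-freeness is read off from the two cases (free-product $\Rightarrow$ factorization, otherwise $\Rightarrow 0$) matching Definition \ref{def:FreeProdGraphs} verbatim. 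I expect the main obstacle to be the power-counting in the second paragraph: carefully accounting for how injective labellings of $T$ relate to independent injective labellings of the $T_i$ glued at shared vertices, tracking the exact exponent of $N$ as a function of the first Betti number of $G_{red}(T)$, and invoking Mingo--Speicher's $\mathfrak r(T)$ bound uniformly so that \emph{all} non-tree and disconnected contributions are simultaneously controlled — the permutation-invariance reduction makes this possible but the estimate must be done graph-by-graph with care about which vertices are summed injectively and which are shared.
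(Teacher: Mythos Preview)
Your overall architecture matches the paper's: decompose $T$ into its monochromatic connected components $T_{i,k}$, use independence and permutation invariance to factorize $\esp[\tau_N^0[T]]$ into a product of $\esp[\prod_k\tau_N^0[T_{i,k}]]$'s times a combinatorial power of $N$ (this is exactly \cite[Proposition 12.3]{MAL12}, giving formula \eqref{FirstEq}), then argue that the power of $N$ is exactly right when $G_{red}(T)$ is a tree and strictly deficient otherwise. Your first and third paragraphs are essentially correct, and your identification of the first Betti number of $G_{red}(T)$ as the relevant exponent is on target.

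The genuine gap is in your second paragraph, where the power-counting is inverted. You write that when $G_{red}(T)$ has a cycle, some monochromatic component $T_{i,k}$ is non-cyclic with $\mathfrak r(T_{i,k})<2$, so that $\tau_N^0[T_{i,k}]=O(N^{\mathfrak r/2-1})\to 0$. But $\mathfrak r(\cdot)\geq 2$ always (the convention assigns two leaves to the trivial tree), and a monochromatic component with cutting edges has $\mathfrak r(T_{i,k})>2$, so the Mingo--Speicher bound $O(N^{\mathfrak r(T_{i,k})/2-1})$ \emph{grows} with $N$ rather than vanishes. The correct argument runs the other way: the factorization \eqref{FirstEq} produces a prefactor asymptotic to $N^{-\rho}$, where $\rho$ is the number of independent cycles of $G_{red}(T)$; each non-cyclic $T_{i,k}$ may contribute a \emph{divergent} factor $O(N^{\mathfrak r(T_{i,k})/2-1})$, and the crux is the inequality
\[
\rho \ \geq \ 1 \ + \ \sum_{i,k}\frac{\mathfrak r(T_{i,k})-2}{2},
\]
valid precisely because $T$ is cyclic: the Eulerian structure forces each component whose tree of two-edge connected components has $\ell$ leaves to create at least $(\ell-2)/2$ independent cycles in $G_{red}(T)$, with one further cycle coming from $T$ not being a free product. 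This is what yields the net estimate $\esp[\tau_N^0[T]]=O(N^{-1})$. Without this inequality your argument does not close: the possible growth from the non-cyclic components has to be shown to be \emph{dominated} by the decay $N^{-\rho}$, not to vanish on its own.
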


\noindent This theorem is a slight different version of \cite[Theorem 12.1]{MAL12}, where we do not assume Assumption 3 but the convergence of the distribution of traffics for all $^*$-test graphs.

\begin{proof} Let $T=(V,E,\gamma,\eps)$ be a cyclic $^*$-test graph in variables $\mathbf x_1 \etc \mathbf x_p$. For $i=1\etc p$, we denote by $T_{i,k}$, $k=1\etc K_i$ the connected components of $T$ labelled by variables in $\mathbf x_i$ that are not reduced to a single vertex. In general, these $^*$-test graphs are not cyclic. For $i=1 \etc p$, denote by $V_i$ the union of the vertices of $T_{i,k}$ for $k=1\etc K_i$. Then, by $p-1$ applications of \cite[Proposition 12.3]{MAL12}, one has

\begin{eqnarray} \label{FirstEq}
	  \esp\Big[ \tau_N^0\big[ T(\mathbf X_N)\big] \Big]  & = & \frac{(N-1)!}{(N-|V|)!} \times   \frac{(N-|V_1| )! \dots (N-|V_p| )!} { (N-1)! \dots (N-1)! } N^{K_1-1} \dots N^{K_p-1} \nonumber \\
	&	  \times &  \esp \Big[ \prod_{k=1}^{K_1}   \tau_N^0 \big[ T_{1,k} (\mathbf X_1\toN) \big] \Big]   \times \dots \times  \esp \Big[  \prod_{k=1}^{K_p}   \tau_N^0 \big[ T_{p,k} (\mathbf X_1\toN) \big] \Big]. 
\end{eqnarray}

\noindent If $T$ is a free product of $^*$-test graphs in variables $\mathbf x_1 \etc \mathbf x_2$, then $T_{i,k}$ are cyclic for any $i=1\etc p$ and $k=1\etc K_i$. By the convergence in distribution of traffics on $\Gcyc$ of $\mathbf X_1 \etc \mathbf X_p$ separately (and Assumption \ref{ConvTraff}), we get the convergence of $ \esp\big[ \tau_N^0\big[ T(\mathbf X_N)\big] \big]$ to the expected limit with minor modification of the proof of \cite[Theorem 12.1]{MAL12}.
\\
\\From now, we assume that $T$ is not a free product and show the following (it will be useful later), with the same notations as above.

\begin{Lem}[Tightness on the setting of the asymptotic traffic-freeness Theorem on $\Gcyc$]
\noindent With $\mathbf X_N$ as in Theorem \ref{Th:AsympFree} except that Assumption \ref{ConvTraff} is not satisfied, for any cyclic $^*$-test graph $T$ which is not a free product, the quantity $\esp\big[ \tau_N^0\big[ T(\mathbf X_N)\big]  \big|$ is $O(N^{-1})$ as $N$ goes to infinity. 
\end{Lem}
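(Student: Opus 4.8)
The plan is to start from the exact identity (\ref{FirstEq}), which expresses $\esp[\tau_N^0[T(\mathbf X_N)]]$ as a product of a purely combinatorial prefactor — a ratio of factorials times $N^{K_1-1}\cdots N^{K_p-1}$ — and an analytic factor, namely the product over $i=1,\dots,p$ of $\esp[\prod_{k=1}^{K_i}\tau_N^0[T_{i,k}(\mathbf X_i\toN)]]$. The first step is to estimate the combinatorial prefactor exactly: since $V = V_1 \cup \dots \cup V_p$ up to the shared vertices, a counting of vertices gives that the prefactor is of order $N^{c(T)}$ where $c(T)$ can be read off from how many vertices are shared among the components. I would introduce the reduced graph $G_{red}(T)$ from Definition \ref{def:FreeProdGraphs}: $T$ being \emph{not} a free product means precisely that $G_{red}(T)$ has a cycle, which will translate into the prefactor being strictly smaller (by a factor $N^{-1}$ per independent cycle) than it would be in the free-product case. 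Concretely, $|V| \ge \sum_i |V_i| - \sum_j (\deg_{G_{red}}(v_j)-1) $ (inclusion–exclusion on shared vertices), and the discrepancy from equality, together with the first Betti number of $G_{red}(T)$, gives the saving.

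The second step is to bound the analytic factor using Assumption \ref{TechAss} (control of growth). For each $i$, the $^*$-test graphs $T_{i,1},\dots,T_{i,K_i}$ need not be cyclic, but Assumption \ref{TechAss} (reformulated for the injective trace, as the excerpt notes is permitted) gives $\esp[\prod_{k=1}^{K_i}\tau_N^0[T_{i,k}(\mathbf X_i\toN)]] = O(N^{\sum_k(\mathfrak r(T_{i,k})/2-1)})$. The key geometric input is then a bookkeeping lemma: summing $\sum_{i,k}(\mathfrak r(T_{i,k})/2 - 1)$ over all components, combined with the exponent $c(T)$ from the prefactor, must total $-1$ (or less) when $G_{red}(T)$ contains a cycle. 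This is where I would reuse the combinatorial heart of the proof of \cite[Theorem 12.1]{MAL12}: in the free-product case everything cancels to give exponent $0$, and each independent cycle in $G_{red}(T)$ costs exactly one extra power of $N^{-1}$ in the prefactor while the $\mathfrak r$-budget of the components cannot compensate (a cutting edge of $T$ that lies on a cycle of $G_{red}(T)$ either fails to exist or forces the pieces it joins to be ``less than cyclic,'' lowering $\sum \mathfrak r$). So the exponents add up to at most $-1$.

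**The main obstacle** I anticipate is the precise matching of the two exponents — showing that the $N^{-1}$ saving from the cycle in $G_{red}(T)$ is not silently eaten by an $\mathfrak r(T_{i,k})/2$ being larger than expected for some non-cyclic component. This requires a careful case analysis of \emph{why} $T$ is not a free product: either two components $T_{i,k}$ and $T_{i',k'}$ share two or more vertices, or there is a more global cycle through several components and shared vertices. In each case one must argue, via the structure of two-edge-connected components and cutting edges, that the total budget $\sum_{i,k}\mathfrak r(T_{i,k})/2 - |\{\text{components}\}| + c(T) \le -1$. I would organize this by induction on $p$ using \cite[Proposition 12.3]{MAL12} to peel off one family at a time, reducing to the case $p=2$, where the statement is that if $T_1$ (labelled $\mathbf x_1$) and its complement share at least two vertices or are glued along a cycle, then the factorial prefactor already carries the $N^{-1}$ and the Mingo–Speicher bounds on the two sides are at worst neutral. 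Once this $p=2$ estimate is in hand, the general case follows by iterating, and the lemma is proved.
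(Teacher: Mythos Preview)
Your approach is essentially the paper's: start from (\ref{FirstEq}), read the prefactor as $N^{-\rho}$ with $\rho$ the first Betti number of $G_{red}(T)$, bound the analytic factor via Assumption~\ref{TechAss} by $N^{\sum_{i,k}(\mathfrak r(T_{i,k})/2-1)}$, and show the total exponent is at most $-1$ when $T$ is not a free product.

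The only place you diverge is the final bookkeeping, where you propose an induction on $p$ reducing to $p=2$. The paper does this in one stroke with a cleaner geometric observation you are circling around but do not state sharply: because $T$ is \emph{cyclic} (i.e.\ has an Eulerian circuit), each component $T_{i,k}$ whose tree of two-edge connected components has $\ell$ leaves forces at least $(\ell-2)/2$ independent cycles in $G_{red}(T)$ --- the Eulerian circuit must re-enter and leave each leaf component of $\mathfrak T(T_{i,k})$ through the surrounding structure, and each such re-entry beyond the baseline two creates a cycle in $G_{red}(T)$. Summing, $\rho \ge \sum_{i,k}(\mathfrak r(T_{i,k})-2)/2$; adding the extra cycle coming from $T$ not being a free product gives $\rho \ge 1 + \sum_{i,k}(\mathfrak r(T_{i,k})-2)/2$, i.e.\ exactly $\delta \ge 1$. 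No induction or case analysis on how $T$ fails to be a free product is needed. Your worry that a large $\mathfrak r(T_{i,k})$ might ``eat'' the $N^{-1}$ saving is thus resolved directly: every excess leaf is paid for by an extra cycle in $G_{red}(T)$.
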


\noindent First, remark that
	\eq
		 \frac{(N-1)!}{(N-|V|)!} \times   \frac{(N-|V_1| )! \dots (N-|V_p| )!} { (N-1)! \dots (N-1)! } N^{K_1-1} \dots N^{K_p-1}  \sim N^{K_1 + \dots K_p +|V| -\big( |V_1| + \dots + |V_p| \big) -1} =: N^{-\rho}.
	\qe

\noindent Write $|V| = v_1 + v_2$, where $v_1$ is the number of vertices of $T$ that belong to a single connected component $T_{i,k}$, $i=1\etc p$ and $k=1\etc K_i$. Similarly, for any $i=1\etc p$ and $k=1\etc K_i$, write $|V_i| = v^{(i)}_1 + v^{(i)}_2$, where $v^{(i)}_1$ is the number of vertices of $T_{i,k}$ that do not belong to other connected components. The number of vertices of $G_{red}(T)$ is $v_{red} = K_1 + \dots + K_p + v_2$, and its number of edges is $e_{red} = \sum_{i,k} v^{(i)}_2$. Hence, 
	\eq
		 - \rho & := & K_1 + \dots K_p +|V| -\big( |V_1| + \dots + |V_p| \big) -1\\
		 	& = & K_1 + \dots K_p + v_1 + v_2 -\sum_{i,k} (  v^{(i)}_1 + v^{(i)}_2 ) -1\\
			& = &  K_1 + \dots K_p +v_2 - \sum_{i,k} v^{(i)}_2 -1 = v_{red} - e_{red} -1.
	\qe

\noindent By the relation between the number of vertices and edges in a connected graph \cite[Lemma 1.1]{GUI}, $\rho$ is the number of cycles of $G_{red}(T)$, that is the maximal number of edges than can be removed from $G_{red}(T)$ while keeping a connected graph.
\\
\\On the other hand, by Lemma \ref{prop:InjectiveTrace}, for any $i=1\etc p$ and $k=1\etc K_i$, one has the relation
	\eq
		 \tau_N^0 \big[ T_{i,k} (\mathbf X_i\toN) \big] = \sum_{\pi \in \mathcal P(V_{i,k})}  \tau_N \big[ T_{i,k}^\pi (\mathbf X_i\toN) \big]  \mu_{V_{i,k}}(\pi),
	\qe
	
\noindent where $V_{i,k}$ stands for the set of vertices of $T_{i,k}$. Since for any $\pi$ in $\mathcal P(V_{i,k})$ one has $\mathfrak r( T_{i,k}^\pi) \leq \mathfrak r( T_{i,k})$, by Assumption \ref{TechAss} there exists a constant $C$ such that 
		\eq
			\bigg| \esp \Big[  \tau_N^0 \big[ T_{i,k} (\mathbf X_i\toN) \big] \Big] \bigg|  \leq C N ^{  {\mathfrak r(T_{i,k})}/2-1}.
		\qe
		
\noindent So, by the formula \eqref{FirstEq} for $\esp\big[ \tau_N^0\big[ T(\mathbf X_N)\big] \big] $ and the equivalent of the normalizing factor, we get that 
	\eq
		\esp\big[ \tau_N^0\big[ T(\mathbf X_N)\big] \big] &  = & O \big( N^{-\rho} \big) \times O \big( N^{  \sum_{i,k}({\mathfrak r(T_{i,k})}/2-1)} \big) =:  O\big( N^{-\delta} \big).
	\qe

\noindent Since $T$ is not a free product, there exists a cycle on $G_{red}(T)$. Moreover, $T$ being cyclic, each $^*$-test graph $T_{i,k}$ whose tree of two-edge connected components has $\ell$ leaves is responsible of the addition of $(\ell -2)/2$ cycles in $G_{red}(T)$, so that the total number $\rho$ of cycles in $G_{red}(T)$ satisfies
	\eq
		\rho \geq 1 + \sum_{i,k} \frac{ \mathfrak r(T_{i,k}) -2}2.
	\qe

\noindent Hence we get that $\delta \geq 1$, so $\esp\big[ \tau_N^0\big[ T(\mathbf X_N)\big] \big] = O (N^{-1}) $ as expected.

\end{proof}

\subsubsection{The limiting distribution of traffics of a single heavy Wigner matrix} \label{sec:DistributionHeavyWigner}

\noindent In this section we prove Proposition \ref{Prop:DistrHeavy} and show that a single random matrix satisfies Assumptions 2 and 3.
\\
\\As we consider a single Hermitian matrix, it is sufficient to consider $^*$-test graph of the form $T=(V,E)$ (the maps $\gamma$ and $\eps$ are trivial). By invariance of $X_N$ by conjugacy by a permutation matrix,
\eq
	\esp \Big[ \tau^0_N\big[ T(X_N) \big] \Big] & = & \frac 1 N \sum_{ \substack{ \phi : V \to \{1 \etc N\} \\ \textrm{injective} } } \esp \bigg[ \prod_{e\in E} X_N\big( \phi(e) \big)\bigg] \\
			& = & \frac{ (N-1)!}{(N-|V|)!} \times \delta^0_N \big[ T(X_N) \big], 
\qe
where $\delta^0_N \big[ T(X_N) \big] = \esp \big[ \prod_{e\in E} X_N\big( \phi(e) \big)\big] $ does not depend on the injective map $\phi$.  For any $k\geq 1$, denote by $p_{k}$ the number of vertices of $T$ where are attached $k$ loops. For any $k_1 \geq k_2\geq 1$, denote by $q_{k_1,k_2}$ the number of pairs of vertices with $k_1$ edges attaching these vertices in one way and $k_2$ others in the opposite direction. Then, by independence of the entries of $X_N$,
\eq
		\delta^0_N \big[ T(X_N) \big] = \prod_{k\geq 1} \bigg(  \frac{Ê\int t^{k} \textrm d\nu_N(t)}{ N^{\frac k2}} \bigg)^{p_k} \prod_{k_1\geq k_2\geq 1} \bigg(  \frac{  \int z^{k_1} \bar z^{k_2} \textrm d\mu_N(z)} { N^{\frac k2}} \bigg)^{q_{k_1,k_2}} 
\qe
Denote
		$$B =  \sum_{k\geq 1} p_k +  \sum_{k_1,k_2\geq 1}q_{k_1,k_2},$$
which is the number of egdes of $T$ when the multiplicity and the orientation are forgotten. Then, one has
\eq
		\frac 1 {N^B} \delta^0_N \big[ T(X_N) \big] = \prod_{k\geq 1} \bigg(  \frac{Ê\int t^{k} \textrm d\nu_N(t)}{ N^{\frac k2-1}} \bigg)^{p_k} \bigg(  \frac{  \int t^{k} \textrm d\mu_N(t)} { N^{\frac k2-1}} \bigg)^{q_k}. 
\qe
Since the entries of $X_N$ are centered, $ \delta^0_N \big[ T(M_N) \big]$ vanishes as soon as an edge of $T$ is of multiplicity one, orientation forgotten. By the Cauchy-Schwarz inequality and by assumptions (\ref{Assum:MomentMu}), (\ref{Assum:MomentNu}), for any $k\geq 1$,
		\eq
			\frac{ \int t^{k} \textrm d\mu_N(t)}{N^{\frac k2-1}} & = & O(1)\\
			\frac{ \int t^{k} \textrm d\nu_N(t)}{N^{\frac k2-1}} & = & O(1).
		\qe
Hence, we get that $\frac 1 {N^B} \delta^0_N \big[ T(X_N) \big]$ is bounded. Moreover, if $T$ has no loops and all its edges are of even multiplicity, then 
	\eqa
		\frac 1 {N^B} \delta^0_N \big[ T(X_N) \big] \limN \prod_{k\geq 1}   a_k  ^{q_{2k}}. \label{eq:ProofDistributionHeavy}
	\qea
Recall that $B$ is the number of edges of $T$ when multiplicity and orientation of edges are forgotten. By the relation between number of edges and vertices in a connected graph \cite[Lemma 1.1]{GUI},
		$$|V| \leq B +1$$
with equality if and only if the graph obtained from $T$ when we forget the multiplicity and the orientation of its edge is a tree. In that case, we say that $T$ is a fat tree. We deduce from the identity 
	\eq
		\esp \Big[ \tau^0_N\big[ T(X_N) \big] \Big] & = & \big( N^{|V| -(B+1)} + o(1) \big) \times \frac 1 {N^B} \delta^0_N \big[ T(X_N) \big]
	\qe

\noindent that $\esp\big[ \tau_N^0\big[T(X_N) \big] \big]$ is always bounded. If $T$ is cyclic, since cyclic fat trees have even multiplicity of edges, we get by (\ref{eq:ProofDistributionHeavy})
	\eq
		\esp \Big[ \tau^0_N\big[ T(X_N) \big] \Big] & = & \prod_{k\geq 1}   a_k  ^{q_{2k}} \mathbf 1_{T \textrm{ is a fat tree} } +o(1) \limN \tau^0[T].
	\qe

\noindent It remains that $X_N$ satisfies Assumptions 2 and 3. Let $T_1\etc T_k$ be $^*$-test graphs. Let $T=(V,E)$ be the $^*$-graph ($^*$-test graph without the connectedness condition, for which trace and injective trace are defined by the same formulas) obtained as the disjoint union of $T_1 \etc T_K$. By \cite[Lemma 11.7]{MAL12},
\eq
	 \tau_N^0 \big[ T_1(X_N) \big] \dots  \tau_N^0 \big[ T_n(X_N) \big]  
	  & = &  \sum_{\pi}  \frac 1 {N^{n-1}}   \tau_N^0 \big[ T^\pi(X_N) \big] ,
\qe
\noindent where the sum is over all partitions $\pi$ on $V$ that contain at most one vertex of each $T_k$, $k=1\etc n$. For any such a partition $\pi$, denote by $T_1^\pi \etc \tilde T_{m_\pi}^\pi$ the connected components of $T^\pi$. By the independence of the entries of $X_N$, 
\eq
	  \lefteqn{ \esp \Big[ \tau_N^0 \big[ T_1(X_N) \big] \dots  \tau_N^0 \big[ T_n(X_N) \big]  \Big]}\\
	  &  =  & \sum_{\pi}  \frac {N^{m_\pi}} {N^{n}}  \esp \Big[ \tau_N^0 \big[ T_1^\pi (X_N) \big]  \Big] \dots  \esp \Big[ \tau_N^0 \big[ T_{m_\pi}^\pi (X_N) \big]  \Big] ,
\qe

\noindent Each expectation is bounded and converges as $N$ goes to infinity if the $^*$-test graphs are cyclic. We always has $m_\pi \leq n$, expect for the trivial partition. Hence, $ \esp \big[ \tau_N^0 \big[ T_1(X_N) \big] \dots  \tau_N^0 \big[ T_n(X_N) \big]  \big]$ is bounded, and if the $^*$-graphs are cyclic we get
\eq
	  \esp \Big[ \tau_N^0 \big[ T_1(X_N) \big] \dots  \tau_N^0 \big[ T_n(X_N) \big]  \Big] \limN \tau^0  [ T_1  ] \dots  \tau^0  [ T_n ].
\qe

\subsection{Some consequences}

\begin{Cor}[The Wigner case]\label{Cor:WignerCase}~
\\Consider a family $\mathbf X_N$ of independent Wigner matrices, independent of $\mathbf Y_N$. Assume that $\mathbf Y_N$ converges in $^*$-distribution and that it satisfies Assumptions 2 and 3. Then, $(\mathbf X_N, \mathbf Y_N)$ converges in $^*$-distribution and are asymptotically $^*$-free.
\end{Cor}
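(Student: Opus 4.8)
The plan is to extract the statement from the analysis underlying Theorems \ref{MainTh} and \ref{Th:AsympFree}, specialised to the case of trivial parameters, and to observe that in that case Assumption \ref{ConvTraff} on $\mathbf Y_N$ may be weakened to convergence in $^*$-distribution. First I would note that each Wigner matrix $X_j\toN$ satisfies the three assumptions of Theorem \ref{MainTh}: this is proved for heavy Wigner matrices in Section \ref{sec:DistributionHeavyWigner}, and a Wigner matrix is moreover invariant in law under conjugation by a permutation matrix, its sub-diagonal entries being i.i.d. Hence the families $(X_1\toN) \etc (X_p\toN)$ and $\mathbf Y_N$ fall under the scope of Theorem \ref{Th:AsympFree} as soon as $\mathbf Y_N$ has a limiting distribution of traffics; the whole issue is to dispense with that last hypothesis.

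By Proposition \ref{Prop:DistrHeavy} a Wigner matrix has parameter $(a_{j,1},0,0,\dots)$, so the limit of $\esp\big[\tau_N^0[T(X_j\toN)]\big]$ is nonzero only when $T$ is a fat tree all of whose edges have multiplicity $2$. Re-running the computation of the limiting $^*$-moments $\Phi(P)$ --- equivalently, inspecting the weight $\omega_{HW}(G,c)$ of Definition \ref{def:HWWeight} in Proposition \ref{Prop:ComputMoments} --- one sees that only couples $(G,c)$ for which $G$ is a tree each of whose edges is visited exactly twice contribute. For such $(G,c)$ the $^*$-test graph $T_v$ attached to a vertex $v$ by Definition \ref{def:DWeight} is a simple cycle, so $\tau[T_v]$ is an ordinary $^*$-moment of $\mathbf Y_N$. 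Consequently the only statistics of $\mathbf Y_N$ that enter the formula for $\Phi$ are the limits of $\esp\big[\frac1N\Tr P(\mathbf Y_N)\big]$, and the estimates needed to justify the limiting procedure are precisely Assumptions \ref{Ass2} and \ref{TechAss} --- used, as in the proof of Theorem \ref{Th:AsympFree}, to factorise products of traces over independent families and to discard the $^*$-test graphs that are not free products or whose Wigner components fail to be fat trees. This yields a limiting $^*$-distribution of $(\mathbf X_N,\mathbf Y_N)$ depending only on the variances $a_{j,1}$ and on the limiting $^*$-distribution of $\mathbf Y_N$.

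It then remains to recognise this limit as the $^*$-free product. The tree expansion just obtained --- sums over planar trees with doubled edges, carrying simple-cycle traces of $\mathbf Y_N$ at the vertices --- is the non-crossing-pair-partition expansion of the $^*$-distribution of a free family of semicircular elements $(s_j)$, with $s_j$ of variance $a_{j,1}$, free from a family having the limiting $^*$-distribution of $\mathbf Y_N$; alternatively one checks directly that the alternating centred mixed moments of $(\mathbf X_N,\mathbf Y_N)$ vanish in the limit, i.e. the property in Definition \ref{Def:Freeness}, the required identities being immediate consequences of the reduction above and of Corollary \ref{Cor:BasicProp}. The step I expect to be the main obstacle is the geometric bookkeeping behind the second paragraph: proving rigorously that in every term surviving in the limit the connected components of the $^*$-test graph labelled by $\mathbf Y_N$ are simple cycles, so that genuinely only $^*$-moments of $\mathbf Y_N$ --- and not finer traffic statistics --- are needed, and that all remaining contributions are negligible. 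This amounts to revisiting the reduced graph $G_{red}(T)$ and the leaf count $\mathfrak r(\cdot)$ in the proof of Theorem \ref{Th:AsympFree}, and using Assumption \ref{TechAss} to absorb the fact that the matrices of $\mathbf Y_N$ need not be uniformly bounded in operator norm.
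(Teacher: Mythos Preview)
Your approach is correct but takes a different route from the paper's. The paper argues by tightness and subsequences: Assumptions \ref{Ass2} and \ref{TechAss} imply that the traffic moments $\esp\big[\tau_N^0[T(\mathbf Y_N)]\big]$ are bounded for every cyclic $^*$-test graph $T$, so along a subsequence $\mathbf Y_N$ converges in distribution of traffics on $\Gcyc$; Theorem \ref{MainTh} then applies directly along that subsequence; and the author invokes the fact (from \cite{MAL}) that traffic-freeness from a semicircular family implies $^*$-freeness, so the subsequential $^*$-limit of $(\mathbf X_N,\mathbf Y_N)$ depends only on the variances $a_{j,1}$ and the limiting $^*$-distribution of $\mathbf Y_N$. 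Since the latter is assumed to converge, every subsequence gives the same answer and the full sequence converges.

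Your approach instead opens up the combinatorics of Proposition \ref{Prop:ComputMoments} and argues directly that for trivial parameters only $(G,c)\in\mathcal L_2^{(\gamma)}$ survive, and that for those the graphs $T_v$ are simple cycles, so only genuine $^*$-moments of $\mathbf Y_N$ enter the formula. This is more self-contained --- it does not rely on the external semicircular-traffic $\Rightarrow$ $^*$-free result --- and it makes the identification with the non-crossing-pair-partition expansion of a free semicircular family explicit. The cost, which you correctly flag as the main obstacle, is that Proposition \ref{Prop:ComputMoments} is stated under Assumption \ref{ConvTraff}, so you must redo the estimates of Theorem \ref{Th:AsympFree} showing that all non-simple-cycle contributions from $\mathbf Y_N$ are already negligible at the level of finite $N$, using only Assumptions \ref{Ass2} and \ref{TechAss}. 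The paper's subsequence trick sidesteps that work entirely by first passing to a subsequence where Assumption \ref{ConvTraff} holds and only afterwards observing that the answer did not depend on which subsequence was chosen.
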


\begin{proof} By Assumptions 2 and 3, $\mathbf Y_N$ is tight on $\Gcyc$. Consider a subsequence along which $\mathbf Y_N$ converges. Let $X_N$ be a heavy Wigner matrix with trivial parameter $(a,0,0, \dots)$. By Theorem \ref{MainTh}, $(\mathbf X_N, \mathbf Y_N)$ has a limiting $^*$-distribution along this subsequence, given by formula \eqref{Eq:Distrib1}. The family $\mathbf X_N$ converges to a family of semicircular traffics, traffic free from the limit of $\mathbf Y_N$. Since the traffic-freeness of semicircular variables with arbitrary traffics implies their $^*$-freeness \cite{MAL}, we get that $\mathbf X_N$ and $\mathbf Y_N$ are asymptotically $^*$-free variables.
\end{proof}

\begin{Prop}[Heavy covariance matrices]\label{Prop:HeavyCov}~
\\Let $N$ be an integer. Let $N_0\toN \etc N_K\toN$ be integers such that $N_k\toN \sim c_k N$, $c_k>0$ for any $k=0\etc K$. Let $\mathbf W_N = (W_1\toN \etc W_p\toN)$ be a family of random matrices such that: 
	\begin{itemize}
		\item for any $j=1\etc p$, one has $W_j\toN = M_j\toN Z_j\toN M_j\toNs$. 
		\item $\mathbf M_N$ is a family of independent random matrices $M_j\toN$, $j=1\etc p$, with independent entries having the same distribution of a random variable $m_{j,N}$ such that 
			$$\esp\big[ N m_{j,N}^{2n}\big] \limN a_{j,n}$$
 for any $n\geq 1$. The matrix $M_j\toN$ is of size $N_0$ by $N_{k_j}\toN$ for an integer $k_j$ in $\{1\etc K\}$, $j=1\etc p$.
		\item $\mathbf Z_j\toN = (Z_1\toN \etc Z_p\toN)$ is a family of random matrices, and the matrix $Z_j\toN$ is of size $N_{k_j}\toN$ by $N_{k_j}\toN$.
	\end{itemize}
	
	\noindent Let $\mathbf Y_N$ be a family of $N_0$ by $N_0$ random matrices. Assume that
	\begin{enumerate}
		\item the families of matrices $M_N, \mathbf Y_N, (Z_j\toN )_{k_j=k}, k=1\etc K$ are independent,
		\item the families of matrices $\mathbf Y_N, (Z_j\toN )_{k_j=k}, k=1\etc K$, satisfies the assumption of Theorem \ref{MainTh} separately.
	\end{enumerate}
	
	\noindent Then, the family of matrices $(\mathbf W_N, \mathbf Y_N)$ has a limiting $^*$-distribution as $N$ goes to infinity.
\end{Prop}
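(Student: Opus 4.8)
The plan is to reduce the statement to Theorem~\ref{MainTh} by a standard "rectangularization" trick, realizing all the matrices $W_j\toN$ as corners of heavy Wigner matrices acting on an enlarged space. First I would set $\tilde N = N_0 + N_1\toN + \dots + N_K\toN$ and embed $\mathrm{M}_{N_0}(\mathbb C)$ and the rectangular spaces of the $M_j\toN$'s block-diagonally into $\mathrm{M}_{\tilde N}(\mathbb C)$; write $p_0, p_1, \dots, p_K$ for the diagonal projections onto the corresponding coordinate blocks. For each $j$, let $\hat M_j\toN$ be the Hermitian dilation of $M_j\toN$, i.e. the $\tilde N$ by $\tilde N$ matrix that has $M_j\toN$ in the $(0,k_j)$ block, $M_j\toNs$ in the $(k_j,0)$ block and zero elsewhere; this is, up to the deterministic block pattern, a heavy Wigner matrix with parameter $(a_{j,n})_{n\geq1}$ (the normalization $\esp[N m_{j,N}^{2n}]\to a_{j,n}$ is exactly the parameter condition \eqref{Assum:MomentMu}, once one checks that replacing $N$ by $\tilde N = O(N)$ only rescales $a_{j,n}$ by the deterministic factor $(c_0+\dots+c_K)^{n-1}$, which can be absorbed). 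Then $W_j\toN = p_0 \hat M_j\toN\,(p_{k_j} Z_j\toN p_{k_j})\,\hat M_j\toN p_0$, so every $^*$-monomial in $(\mathbf W_N,\mathbf Y_N)$ can be written as a $^*$-monomial in the family $\hat{\mathbf M}_N=(\hat M_1\toN \etc \hat M_p\toN)$ together with the deterministic/random family $\mathbf Y_N,(Z_j\toN)$, suitably embedded.

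The key steps, in order, are: (i) verify that the family $\hat{\mathbf M}_N$, after conjugating by a uniform permutation matrix on $\{1\etc\tilde N\}$ to symmetrize the block structure, is (asymptotically) a family of independent heavy Wigner matrices in the sense of Definition~\ref{Def:HeavyWignerMatrices} — more precisely, that the block-diagonal projections are themselves built from independent heavy Wigner matrices so that Theorem~\ref{Th:AsympFree} applies with one distinguished non-permutation-invariant family carrying the projections; (ii) check that the auxiliary family $\mathbf E_N$ consisting of the projections $p_0,\dots,p_K$ together with $\mathbf Y_N$ and the $(Z_j\toN)_{k_j=k}$, all embedded block-diagonally into $\mathrm{M}_{\tilde N}(\mathbb C)$, satisfies Assumptions~1, 2 and 3: for the deterministic diagonal projections this is immediate, for $\mathbf Y_N$ and the $(Z_j\toN)$ it follows from the hypothesis that each satisfies the assumptions of Theorem~\ref{MainTh} separately together with their mutual independence (their traffic-free product again satisfies Assumptions~1–3, by the permutation-invariance argument used to prove Theorem~\ref{Th:AsympFree}), and the block-diagonal embedding only changes traffic moments by deterministic factors $c_k$ tracking which block a vertex lands in; (iii) apply Theorem~\ref{MainTh} to conclude that $(\hat{\mathbf M}_N, \mathbf E_N)$ has a limiting $^*$-distribution; (iv) specialize: since each $^*$-moment of $(\mathbf W_N,\mathbf Y_N)$ is a fixed $^*$-moment of $(\hat{\mathbf M}_N,\mathbf E_N)$ up to the harmless rescaling $\frac1{N_0}\Tr = \frac{c_0+\dots+c_K}{c_0}\cdot\frac1{\tilde N}\Tr\circ(p_0\cdot)$, convergence of the former follows from convergence of the latter.

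The main obstacle I expect is step~(ii): one must be careful that embedding a permutation-invariant family block-diagonally destroys permutation invariance on the full space $\{1\etc\tilde N\}$ (permutations must preserve the blocks), so one cannot directly cite the permutation-invariant version of Theorem~\ref{Th:AsympFree} for the whole enlarged family. The standard fix is to keep the projections $p_0,\dots,p_K$ as part of the single distinguished "non-invariant" family allowed by hypothesis~2 of Theorem~\ref{Th:AsympFree}, and to conjugate each remaining block independently by its own uniform permutation (of $\{1\etc N_0\}$, resp. of each $\{1\etc N_k\toN\}$); equivalently, work with the product group of block permutations, under which the $(Z_j\toN)$, $\mathbf Y_N$ and $\hat{\mathbf M}_N$ are jointly invariant in the appropriate sense. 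Verifying that Assumptions~2 and 3 survive this relative-to-blocks setup — i.e. that the growth bound of Assumption~3 is stated with the correct power of $\tilde N$ for test graphs whose vertices are constrained to blocks — is the genuinely technical point; everything else is bookkeeping of normalization constants.
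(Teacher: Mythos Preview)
Your architecture is essentially the paper's: embed everything block-diagonally in $\mathrm M_{\tilde N}(\mathbb C)$, realize each $M_j\toN$ as a corner $p_0(\cdot)p_{k_j}$ of a big Hermitian matrix, put the projections $p_k$ together with $\mathbf Y_N$ and the $Z_j\toN$'s in the single ``non-invariant'' family, and invoke Theorem~\ref{MainTh}. The paper does exactly this (for $K=1$, then by induction on $K$), so your steps (iii) and (iv), including the renormalization $\frac1{N_0}\Tr = \frac{\tilde N}{N_0}\cdot\frac1{\tilde N}\Tr$, match the paper.

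The gap is in step~(i). Your $\hat M_j\toN$, the bare Hermitian dilation with zeros outside the $(0,k_j)$ and $(k_j,0)$ blocks, is \emph{not} a heavy Wigner matrix in the sense of Definition~\ref{Def:HeavyWignerMatrices}, and conjugating by a uniform permutation of $\{1\etc\tilde N\}$ does not make it one: the entries of $U_N\hat M_j\toN U_N^*$ are still not independent, and many are forced to be zero. So you cannot appeal directly to Theorem~\ref{MainTh} (or Theorem~\ref{Th:AsympFree}) with the $\hat M_j\toN$'s playing the role of the heavy Wigner family. Your remark that ``the block-diagonal projections are themselves built from independent heavy Wigner matrices'' does not repair this.

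The paper's fix is simpler than the block-permutation workaround you sketch: one \emph{completes} the dilation by filling in all the remaining blocks of $\hat M_j\toN$ with independent entries having the same law as $m_{j,N}$ (in the paper's $K=1$ notation, one places independent heavy Wigner matrices $X_j^{(0,N)}$ and $X_j^{(1,N)}$ on the diagonal blocks). The resulting matrix $\tilde X_j\toN$ is then a \emph{genuine} heavy Wigner matrix of size $\tilde N$, hence permutation invariant, and one recovers $M_j\toN$ as $p_0\tilde X_j\toN p_{k_j}$. With this single adjustment your plan goes through; the obstacle you worried about in step~(ii) then largely evaporates, because the only non-invariant family is $(p_0,\dots,p_K,\mathbf Y_N,(Z_j\toN))$, whose traffic moments split block by block exactly as the paper records.
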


\begin{proof} We prove the Proposition for $K=1$, the result can be obtained by recurrence on the number of size of matrices. Consider the square matrices of size $(N_0 + N_1)$, by blocks
	\eq
		&\tilde{ W}_j\toN = \left( \begin{array}{cc}
			  W_j\toN&\\
			&0
		\end{array}\right),
		 \  
		 \tilde{ Y}_j\toN = \left( \begin{array}{cc}
			 Y_j\toN & \\
			&0
		\end{array}\right),
		\\
		 &\tilde{ Z}_j\toN = \left( \begin{array}{cc}
			0 & \\
			&  Z_j\toN\\
		\end{array}\right),
		 \  
		 \tilde{ M}_j\toN = \left( \begin{array}{cc}
			 0 &M_j\toN  \\
			 0 & 0
		\end{array}\right), \ \ j=1\etc p.
	\qe

\noindent We consider the matrices
	\eq
		 &
		e_0 =   \left( \begin{array}{cc}
			 \mathbf 1_{N_0}\toN & \\
			 & 0\\
		\end{array}\right),
		\ \
		&e_{1} =  \left( \begin{array}{cc}
			0 &\\
			& \mathbf 1_{N_1}
		\end{array}\right),
		\ \
		\tilde X_j\toN = \left( \begin{array}{cc}
			X_j^{(0,N)} & M_j\toN \\
			 M_j\toNs & X_j^{(1,N)} 
		\end{array}\right), j=1\etc p,
	\qe

	\noindent where $X_j^{(0,N)}, X_j^{(1,N)}$, $j=1\etc p$, are independent heavy Wigner matrix with parameter \break $\big( \Nlim\esp[ N m_{j,N}^{2k} ] \big)_{k\geq 1}$, square of size $N_0$ and $N_1$ respectively, independent of $(\mathbf Y_N, \mathbf Z_N, \mathbf M_N)$. It can be noted that 
	\eq
		\tau_N\big[ T( \tilde{ \mathbf Y}_N, \tilde{ \mathbf Z}_N, e_1,e_2) \big] & = & \tau_{N_0}\big[ T( \tilde{ \mathbf Y}_N, e_2)\big] \times \frac 1 {\frac{c_0}{c_1}+1} \mathbf 1_{T \textrm{ labelled in } (\mathbf y, e_2)}\\
		&  & \ \ \ \ + \ \tau_{N_0}\big[ T( \tilde{ \mathbf Z}_N, e_1)\big] \times \frac 1 {\frac{c_1}{c_0}+1} \mathbf 1_{T \textrm{ labelled in } (\mathbf z, e_1)}.
	\qe
	
	\noindent Hence, $( \tilde{ \mathbf Y}_N, \tilde{ \mathbf Z}_N, e_1,e_2)$ satisfies the assumption of Theorem \ref{MainTh}, and so $( \tilde {\mathbf X}_N, \tilde{ \mathbf Y}_N, \tilde{ \mathbf Z}_N, e_1,e_2)$ has a limiting $^*$-distribution, given by the traffic free product. Since for $j=1\etc p$, one has $\tilde M_j\toN = e_0 \tilde X_j\toN e_1$ and $\tilde W_j\toN = \tilde M_j\toN \tilde Z_j\toN \tilde M_j\toNs$, we get that $(\tilde {\mathbf W}_N, \tilde {\mathbf Y}_N)$ has a limiting $^*$-distribution. Moreover, for any $^*$-polynomial $P$
	\eq
		\frac 1 {N_1+N_0} \Tr \big[ P (\tilde {\mathbf W}_N, \tilde {\mathbf Y}_N) \big]  = \frac 1{\frac {c_1}{c_0}+1} \frac 1 {N_0} \Tr \big[ P ( {\mathbf W}_N, {\mathbf Y}_N) \big].
	\qe
	Hence the convergence of $( {\mathbf W}_N,  {\mathbf Y}_N)$.
\end{proof}

\begin{Prop}[Independent L\'evy and random matrices]\label{Prop:LevyMatrices}~
\\Let $\mathbf X_N =(X_1\toN \etc X_p\toN)$ be a family of independent L\'evy matrices, independent of a family of random matrices $\mathbf Y_N$ satisfying the assumption of Theorem \ref{MainTh}. Then, for any Hermitian matrix $H_N = P(\mathbf X_N, \mathbf Y_N)$, where $P$ is a fixed $^*$-polynomial, the empirical eigenvalues distribution of $H_N$ converges weakly, i.e. for any continue bounded function $f:\mathbf R \to \mathbf R$, $\esp \Big[ \frac 1 N \Tr\big[ f(H_N) \big] \Big]$ converges.
\end{Prop}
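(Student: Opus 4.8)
The plan is to approximate each L\'evy matrix by a truncation, which (up to a rank-one recentering) is a heavy Wigner matrix by Section~\ref{sec:DefLevy}, to apply Theorem~\ref{MainTh} to the truncated model, and to control the truncation error by a rank argument. Throughout, let $d$ be a metric on $\mathcal M^1(\mathbb R)$ compatible with weak convergence (say the bounded-Lipschitz distance), for which $(\mathcal M^1(\mathbb R),d)$ is complete.

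\emph{Step 1 (the truncated model).} Fix the Hermitian $^*$-polynomial $P$ and a level $B>0$. For $j=1\etc p$ write $X_j\toN = \widetilde X_j^{(\alpha,B,N)} + E_j^{(B,N)} + R_j^{(B,N)}$, where $X_j^{(\alpha,B,N)}$ truncates $X_j\toN$ at $B\sigma_N$, $\widetilde X_j^{(\alpha,B,N)} := B X_j^{(\alpha,B,N)} - \esp\big[B X_j^{(\alpha,B,N)}\big]$, $E_j^{(B,N)} := \esp\big[B X_j^{(\alpha,B,N)}\big]$ is a multiple of the all-ones matrix (hence of rank one), and $R_j^{(B,N)}$ carries the discarded entries $\frac{x_{i,j}}{\sigma_N}\mathbf 1_{|x_{i,j}|>B\sigma_N}$. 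By Section~\ref{sec:DefLevy} and the fact that scaling a heavy Wigner matrix by $B$ multiplies its parameter by $B^{2k}$, the matrix $\widetilde X_j^{(\alpha,B,N)}$ is a heavy Wigner matrix with parameter $\big(\tfrac{\alpha B^{2k-\alpha}}{2k-\alpha}\big)_{k\geq 1}$. Since $\mathbb P(|x_{1,1}|>B\sigma_N)=O(\tfrac{1}{NB^\alpha})$, the matrix $R_j^{(B,N)}$ has $O(\tfrac{N}{B^\alpha})$ non-zero entries in expectation, hence has rank at most $NB^{-\alpha/2}$ with probability $1-O(B^{-\alpha/2})$ by Markov's inequality. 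The family $\widetilde{\mathbf X}_N^{(\alpha,B)} := (\widetilde X_1^{(\alpha,B,N)}\etc\widetilde X_p^{(\alpha,B,N)})$ consists of independent heavy Wigner matrices, independent of $\mathbf Y_N$, so Theorem~\ref{MainTh} applies: $(\widetilde{\mathbf X}_N^{(\alpha,B)},\mathbf Y_N)$ has a limiting $^*$-distribution $\Phi_B$, and hence, setting $H_N^{(B)} := P(\widetilde{\mathbf X}_N^{(\alpha,B)},\mathbf Y_N)$, the mean empirical eigenvalues distribution $\mathcal L_{H_N^{(B)}}$ converges in moments to the sequence $m_k^{(B)} := \Phi_B(P^k)$.

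\emph{Step 2 (weak convergence of the truncated model --- the crux).} For fixed $B$, the family $\{\mathcal L_{H_N^{(B)}}\}_N$ is tight since $\int x^2\,d\mathcal L_{H_N^{(B)}} = \esp\big[\tfrac1N\Tr[(H_N^{(B)})^2]\big]$ converges, hence is bounded in $N$. Moreover the measure $\pi^{(B)}$ with moment sequence $(m_k^{(B)})_k$ is determined by its moments: this follows from the explicit formula of Proposition~\ref{Prop:ComputMoments} applied to $P^k$, exactly as Zakharevich~\cite{ZAK} deduces the moment-determinacy of the limiting spectrum of a single heavy Wigner matrix from her tree-enumeration formula --- the number of colored minimal cycles on trees indexing the sum grows controllably, the geometric growth of the parameters $\tfrac{\alpha B^{2k-\alpha}}{2k-\alpha}$ plays the role of her hypothesis $a_k=O(k^\beta)$, and the traffic weights $\omega_{TR}$ are bounded in terms of the (all finite, by Assumption~\ref{ConvTraff}) moments of $\mathbf Y_N$. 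Moment convergence, tightness, and the uniform integrability coming from the control of higher moments then upgrade to $\mathcal L_{H_N^{(B)}}\Rightarrow\pi^{(B)}$ weakly. This step is the main obstacle: Theorem~\ref{MainTh} only yields convergence of moments, whereas the truncated model has a limit of unbounded support, so a moment-determinacy input --- the heavy Wigner analogue of Zakharevich's criterion --- is unavoidable.

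\emph{Step 3 (removing the truncation).} Put $H_N := P(\mathbf X_N,\mathbf Y_N)$. Expanding $P$ and substituting the decomposition of Step~1, $H_N - H_N^{(B)}$ is a sum of a number of monomials depending only on $P$, each containing a factor of rank $O(NB^{-\alpha/2})$ (some $R_j^{(B,N)}$ or the rank-one $E_j^{(B,N)}$) on the event of Step~1. Using the bound $|F_{H_N}(x)-F_{H_N^{(B)}}(x)|\leq \operatorname{rank}(H_N-H_N^{(B)})/N$ on the distribution functions, and bounding the contribution of the complementary event (of probability $O(B^{-\alpha/2})$) by $2\|f\|_\infty$ for bounded Lipschitz $f$, one obtains $d(\mathcal L_{H_N},\mathcal L_{H_N^{(B)}})\leq\eta(B)$ with $\eta(B)\to 0$ as $B\to\infty$, uniformly in $N$. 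The same comparison between two levels $B'<B$ gives $d(\mathcal L_{H_N^{(B')}},\mathcal L_{H_N^{(B)}})\leq\eta(B')$ uniformly in $N$ (now $H_N^{(B)}-H_N^{(B')}$ involves the sparse matrices $\frac{x_{i,j}}{\sigma_N}\mathbf 1_{B'\sigma_N<|x_{i,j}|\leq B\sigma_N}$); letting $N\to\infty$ and invoking Step~2, $d(\pi^{(B')},\pi^{(B)})\leq\eta(B')$, so $(\pi^{(B)})_B$ is $d$-Cauchy and converges to some $\pi\in\mathcal M^1(\mathbb R)$. Finally, for $\varepsilon>0$ choose $B$ with $\eta(B)+d(\pi^{(B)},\pi)<\varepsilon$ and then $N_0$ with $d(\mathcal L_{H_N^{(B)}},\pi^{(B)})<\varepsilon$ for $N\geq N_0$; the triangle inequality gives $d(\mathcal L_{H_N},\pi)<2\varepsilon$ for $N\geq N_0$. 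Hence $\mathcal L_{H_N}\Rightarrow\pi$, i.e.\ $\esp\big[\tfrac1N\Tr[f(H_N)]\big]$ converges for every bounded continuous $f$, as claimed.
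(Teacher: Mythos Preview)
Your approach --- truncate each L\'evy matrix at level $B$, apply Theorem~\ref{MainTh} to the truncated model, then remove the truncation via a rank (Kolmogorov-distance) argument --- is exactly the paper's strategy. The paper is much terser: it asserts weak convergence of $\mathcal L_{H_N^{(B)}}$ directly from the limiting $^*$-distribution, and delegates the rank comparison and the passage $B\to\infty$ to \cite[Section~8 and Lemma~12.2]{BGCD12}. Your Step~3 spells out what the paper outsources to that reference, and your rank-one recentering $E_j^{(B,N)}$ (needed because truncation can destroy centering) is a detail the paper omits.

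The one genuine soft spot is your Step~2. You argue moment-determinacy of $\pi^{(B)}$ via a Zakharevich-type estimate applied to the formula of Proposition~\ref{Prop:ComputMoments}. The heavy-Wigner weights $\omega_{HW}$ are indeed controlled, since the parameters $\tfrac{\alpha B^{2k-\alpha}}{2k-\alpha}$ grow geometrically in $k$. But the traffic weights $\omega_{TR}(G,c)=\prod_v\tau[T_v]$ involve arbitrary limiting traffic moments of $\mathbf Y_N$, and Assumption~\ref{ConvTraff} only says each $\tau[T]$ is finite --- it gives no growth rate as the test graph $T$ gets large. Saying ``$\omega_{TR}$ is bounded in terms of the (all finite) moments of $\mathbf Y_N$'' is true but not sufficient to run Carleman's criterion; you would need an additional hypothesis (e.g.\ a uniform operator-norm bound on $\mathbf Y_N$ as in Proposition~\ref{Prop:Example}.1) to control the growth of the $k$-th moment of $\pi^{(B)}$. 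The paper's own proof has the same gap: it passes silently from ``has a limiting $^*$-distribution'' to ``converges weakly to a measure $\mu^{(B)}$''. So your identification of Step~2 as ``the crux'' is apt, but the justification you sketch there is incomplete at the stated level of generality.
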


\begin{proof} Let $B\geq 1$ be a positive and large number, and set for any $j=1\etc p$, 
	$$X_j^{(B,N)} = \big(  X_j\toN(m,n) \times \mathbf 1_{ |X_j\toN(m,n) |\leq B}\big)_{m,n=1\etc N}.$$

\noindent By Section \ref{sec:DefLevy}, the family $\mathbf X_N^{(B)} = (X_1^{(B,N)} \etc X_p^{(B,N)})$ is a family of independent heavy Wigner matrices. Hence, by Theorem \ref{MainTh} $(\mathbf X_N^{(B)}, \mathbf Y_N)$ has a limiting $^*$-distribution. Hence, the empirical eigenvalues distribution of $H_N^{(B)} = P( \mathbf X_N^{(B)}, \mathbf Y_N)$ converges weakly to a measure $\mu^{(B)}$. By the same reasoning as in \cite[Section 8]{BGCD12}, $H_N^{(B)}$ and $H_N$ are closed is the sense of rank and by \cite[Lemma 12.2]{BGCD12}, this implies that the empirical eigenvalues distribution of $H_N$ converges weakly to a measure $\mu$ and $\mu = \lim_{B \rightarrow \infty} \mu^{(B)}$.
\end{proof}

\section[Limiting $^*$-moments of heavy Wigner and random matrices]{Limiting $^*$-moments of independent heavy Wigner and random matrices}

\subsection{Proof of Proposition 2.8}

\noindent Let $\mathbf X_N = (X_1\toN \etc X_p\toN)$ and $\mathbf Y_N$ be as in Theorem \ref{MainTh}. We denote by $\Phi$ their mean limiting $^*$-distribution and by $\tau$ their limiting distribution of traffics on $\Gcyc$: for any $^*$-polynomial $P$ and any $^*$-test graph,
	\eqa
		\Phi(P) & = & \Nlim \frac 1 N \Tr \big[ P(\mathbf X_N, \mathbf Y_N) \big],\\
		\tau[P] & = & \Nlim \tau_N \big[ T (\mathbf X_N, \mathbf Y_N) \big].
	\qea

\noindent The family of variables $\mathbf x = (x_1 \etc x_p)$ corresponds to $\mathbf X_N$, the family $\mathbf y$ correspond to $\mathbf Y_N$. Theorem \ref{MainTh} tells us how $\Phi$ can be written in term of the injective version of $\tau$, defined by (\ref{eq:DefInj}). Consider a polynomial $P$ of the form
	\eqa \label{eq:ReferencePoly}
		P = x_{\gamma(1)} P_1(\mathbf y) \dots x_{\gamma(L)} P_L(\mathbf y),
	\qea
where $\gamma: \{1\etc L\} \to \{1\etc p\}$. Let $T_P$ be the $^*$-test graph in variables $\mathbf x, P_1(\mathbf y) \etc  P_L(\mathbf y)$ as in Figure \ref{SimpleCycle2}.

	\begin{figure}[!h]
		\parbox{1.5cm}{~}
		\parbox{4.5cm}{ \includegraphics[height=25mm]{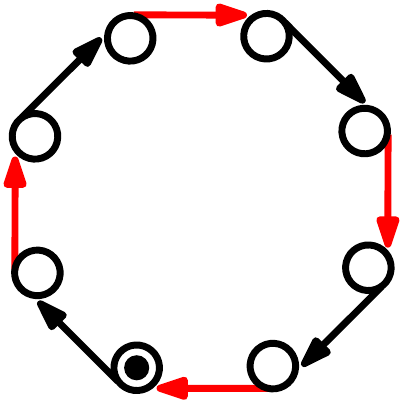}}
		\parbox{8cm}{ \caption{Example with $L=4$. Starting from the vertex marked with a dot and running clock-wisely, the edges are labelled $x_{\gamma(1)}, P_1(\mathbf y)\etc x_{\gamma(L)}$ and $P_L(\mathbf y)$. Then, the black edges have labels $x_{\gamma(1)} \etc x_{\gamma(L)}$, and the red ones have label $P_1(\mathbf y) \etc P_L(\mathbf y)$.} \label{SimpleCycle2}}
	\end{figure}
		
\noindent Then, by the traffic-asymptotic freeness of $X_1\toN \etc X_p\toN$ and $\mathbf Y_N$, we obtain the formula
	\eqa \label{eq:ApplicFree}
		\Phi( P )  =  \sum_{\pi \in \mathcal P(2L)} \mathbf 1_{( T_P^\pi \textrm{ is a free product} )} \prod_{ \tilde T }   \tau^0 [ \tilde T ] ,
	\qea
	
\noindent where 
\begin{itemize}
	\item $\mathcal P(2L)$ denotes the set of partitions of $\{1\etc 2L\}$,
	\item $T_P^\pi$ is the $^*$-test graph defined from $T_P$ and $\pi$ as in Figure \ref{SimpleCycledef.pdf},
	\item the notion of free product of $^*$-test graphs, given in Definition \ref{def:FreeProdGraphs}, is relatively to the family of variable $(x_1) \etc (x_p), \big(P_1(\mathbf y) \etc P_K(\mathbf y) \big)$,
	\item $\tau^0$ is the injective version of $\tau$, defined by (\ref{eq:DefInj}),
	\item the product is over all connected components of $T_P^\pi$ that are labelled by a family among $(x_1) \etc (x_p), \big(P_1(\mathbf y) \etc P_L(\mathbf y)\big)$, as illustrated in Figure \ref{fig:DefFalseFreeness}.
\end{itemize}

\noindent We have drawn two examples of free products of $^*$-test graphs in Figure \ref{fig:FatFreeProduct2}, remembering the marked point of $T_P$ as we did in Figure \ref{SimpleCycle2}.
\\
\\Let $\pi$ be partition in $\mathcal P(2L)$ such that $T_P^\pi$ is a free product. Let $F^\pi$ be the $^*$-test graph obtained from $T_P^\pi$ by merging the components labelled $P_1(\mathbf y) \etc P_L(\mathbf y)$. Hence, $F^\pi$ must be a fat tree. From $\pi$ and $F^\pi$, we get a minimal cycle on a tree $(G_\pi, c_\pi)$ in $\mathcal L^{(\gamma)}$ as we take care of the way we fold $T_P$ into $T_P^\pi$. The two partitions of Figure \ref{fig:FatFreeProduct2} give the same minimal cycle.

\begin{figure}[h!]
\begin{center}
\includegraphics[width=80mm]{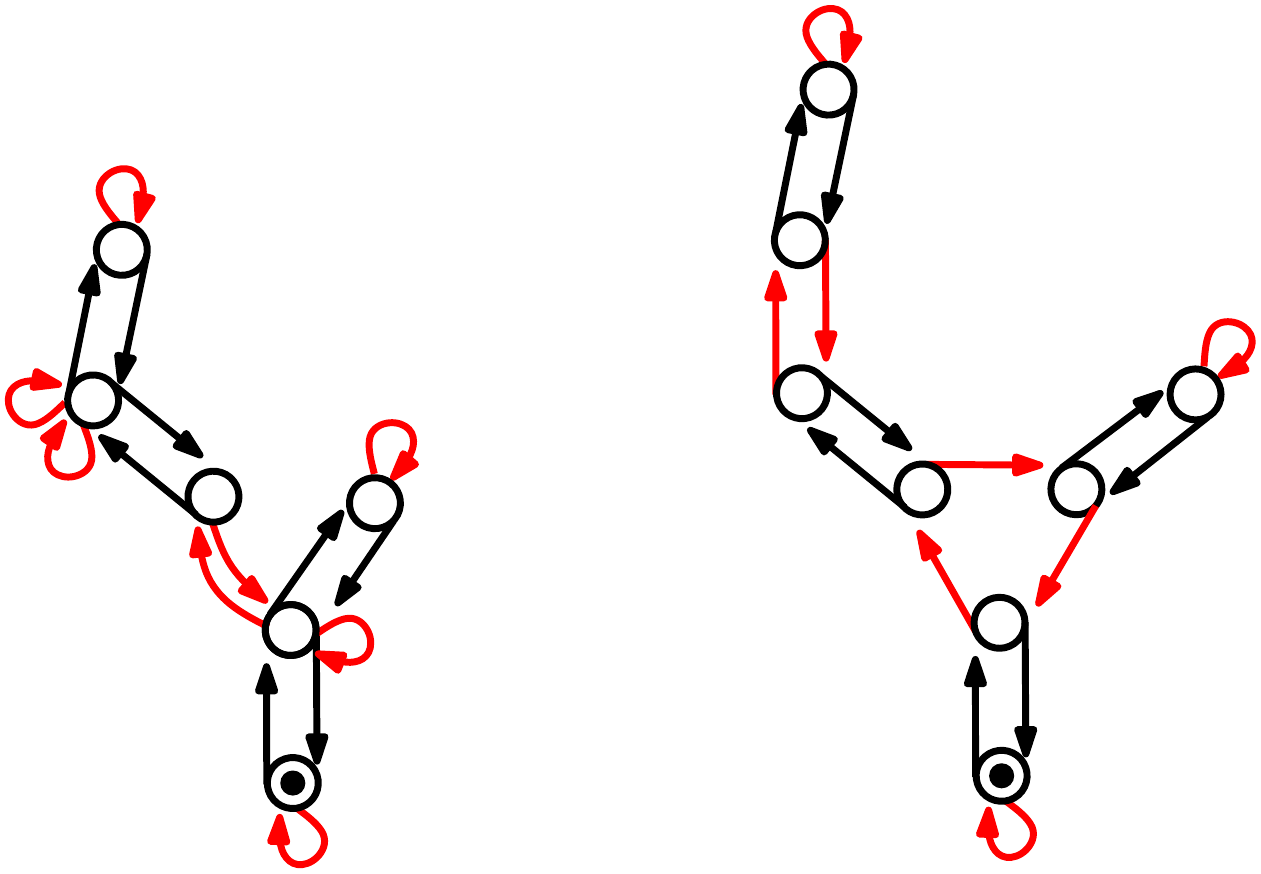} \caption{\label{fig:FatFreeProduct2} Two free products of fat trees (black) and arbitrary $^*$-test graphs (red) for a polynomial $P = x_{\gamma(1)} P_1(\mathbf y) \dots x_{\gamma(L)} P_L(\mathbf y)$ with $L=8$.}
\end{center}
\end{figure}

\noindent Let $(G,c)$ in $\mathcal L^{(\gamma)}$. All partitions $\pi$ such that $(G_\pi,c_\pi) = (G , c)$ will give the same contribution $\omega_{HW}(G, c)$ from heavy Wigner matrices that can be factorized in $\prod_{ \tilde T }   \tau^0 [ \tilde T ]$: 
	\eq
		\Phi( P )  = \sum_{(G,c) \in \mathcal L^{(\gamma)}} \omega_{HW}(G, c)    \sum_{\pi \in \mathcal P(2L)} \mathbf 1_{(G_\pi,c_\pi) = (G , c)}    \prod_{ T' }    \tau^0 [  T' ] ,
	\qe

\noindent where the product on $T'$ is now over all connected components of $T_P^\pi$ that are labelled $P_1(\mathbf y) \etc P_L(\mathbf y)$. Let $(G,c)$ in $\mathcal L^{(\gamma)}$. It remains to show that
	$$\sum_{\pi \in \mathcal P(2L)} \mathbf 1_{(G_\pi,c_\pi) = (G , c)}    \prod_{ T' }    \tau^0 [  T' ]  = \omega_{TR}(G,c).$$
	
\noindent For any $v$ vertex of $G$, recall that we have defined a $^*$-test graphs $T_v$ labelled in $P_1(\mathbf y) \etc P_K(\mathbf y)$. All the partitions $\pi$ such that $(G_\pi,c_\pi) = (G , c)$ give the same fat tree $F = F^\pi$. ''Replace`` the vertices of $F$ by corresponding $^*$-test graphs $T_v$'s in the following way: 
\begin{enumerate}
	\item consider the disjoint union of the $T_v$'s.
	\item By construction, each vertex of a $T_v$ is associated to an edge of $G$. Link the vertices of two different $T_v$ and $T_w$ that correspond to a same edge of $G$ by $n$ edges, where $n$ is the number of times $c$ walks on this edge.
	\item Orient half of these edges in one direction and the other and the other direction.
	\item Label these edges by the color of the corresponding step of $c$.
\end{enumerate}

\noindent The $^*$-test graph we obtain is $T^{\pi_0}_P$, where $\pi_0$ is the coarser partition for which $(G_{\pi_0},c_{\pi_0}) = (G , c)$ (see the rightmost $^*$-test graph in Figure \ref{fig:FatFreeProduct2}). The other partitions $\pi$ which give $(G_\pi,c_\pi) = (G , c)$ are the sub-partitions of $\pi_0$ which do not put in a same block vertices from different $T_v$'s (as for the leftmost $^*$-test graph in Figure \ref{fig:FatFreeProduct2}, compared to the rightmost). By the relation (\ref{eq:InjStand}) between the standard and the injective trace, we obtain the expected result and hence the Proposition.

\subsection{Proof of Corollary 2.9}

\begin{proof}[Proof of 1. and 2.]As a cycle visits a tree with different colors for each vertices, the total number of steps of a given color is an even number. We then obtain that $\Phi ( x_j ) = 0$ for any $j=1\etc p$ and the second point of the corollary. Moreover, $\Phi ( x_j^2 )$ is $a_{j,1}$ since there is only one cycle running on a tree with one edge in two steps, which gives the contribution $a_{j,1}$.
\\
\\{\it Proof of 3.} The set of cycles running on a tree with $n_1$ steps of colors $i_1$, then $n_2$ steps of colors $i_2$, and so on, is in bijection with the product of the sets of cycles running on a tree with $n_j$ steps of colors $i_j$, $j=1\etc L$ as soon as the colors are distinct. The weights $\omega_{HW}$ for $x_{i_1}^{n_1} \dots x_{i_L}^{n_L} $ are the products of weights for $x_{i_1}^{n_1}$ \etc $x_{i_L}^{n_L}$. The weights $\omega_{TR}$ comes from the root of the trees.

\end{proof}

\subsection{The false freeness property}

\noindent The folding trick gives a unformal algorithm for the enumeration of the cycles running on trees when $L$ is not to large, and then for the computation of limiting joint moments in heavy Wigner and deterministic matrices of few degree. Denote by $\mathcal L^{(\ell)}_2$ the elements of $\mathcal L^{(\ell)}$ such that the cycles visits each edge of their tree twice. They correspond to cycles that contribute for classical Wigner matrices, a case where $\mathbf X_N$ and $\mathbf Y_N$ are asymptotically free in the sense of Voiculescu \cite{NS}. The elements of $\mathcal L^{(\ell)} \setminus \mathcal L^{(\ell)}_2$ are enumerated by folding the combinatorial objects of $\mathcal L^{(\ell)}_2$.
\begin{figure}[!h]
\begin{center}
 \includegraphics[width=140mm]{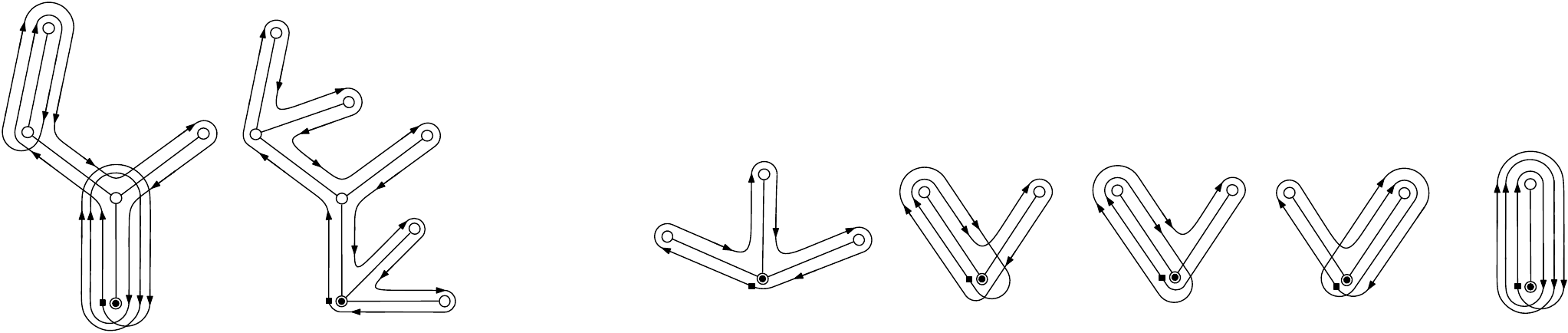}
 \caption{Left: a couple $(G,c)$ and its unfolded version. Right: starting with a double graph, all the couples $(G,c)$ obtained by the folding trick.}\label{fig:RunningOnTheTree3}
 \end{center}
\end{figure}
\\{\bf Unfolding trick: }Let $(G,c)$ in $\mathcal L^{(\ell)}\setminus \mathcal L^{(\ell)}_2$. After some steps, leaving a vertex $v$, the cycle $c$ comes back in an edge it has already visited. Then it induces a sub-cycle $\hat c$ on the tree of the descendent of $v$. We create a copy $\hat G$ of the sub-tree induces by $\hat c$, forget its original embedding and embed it in such a way $\hat c$ respects the rules concerning the order of visits of the edges of $\hat G$. Then we attach $\hat G$ endowed with this new orientation at the vertex $v$, between the edges it has already visited and the others. If some edges of the tree of the descendent of $s$ where only visited by $\hat c$, then we erase them. We then keep an element of $\mathcal L^{(\ell)}$. Iterating this procedure a finite number of times, we then get an element of $\mathcal L^{(\ell)}_2$.
\\
\\{\bf Folding trick: }Reciprocally, let $(G,c)$ be an element of $\mathcal L^{(\ell)}$. Chose an edge $e_1$ of the tree. If possible, chose an other edge $e_2$, which shares the same vertex toward the root and which is of the same color as $e_1$. Then, merge these two edges, draw the tree of the descendant of $e_1$ at the right of the the tree of the descendant of $e_2$ and redirect the cycle $c$ in this new tree. We then obtain an new element of $\mathcal L^{(\ell)}$. For any element $(G_0,c_0)$ of $\mathcal L^{(\ell)}_2$, we denote by $fold(G_0,c_0)$ the set of all elements of $\mathcal L^{(\ell)}$ we get by applying many times this trick.
\\
\\Folding and unfolding tricks are illustrated in Figure \ref{fig:RunningOnTheTree3}. Two different elements of $\mathcal L^{(\ell)}_2$ have different folding sets. We then get from this construction the following proposition.

\begin{Prop}[The false freeness property]\label{prop:FalseFreenessProperty}~\label{Prop:FalseFreeness}
\\For any $^*$-polynomial $P$ of the form
	$P = x_{\gamma(1)} P_1(\mathbf y) \dots x_{\gamma(L)} P_L(\mathbf y)$,
one has
	\eqa		\label{Eq:Distrib1}
		\Phi ( P) = \sum_{ (G_0,c_0) \in \mathcal L_2^{(\gamma)}} \sum_{(G,c) \in fold(G_0,c_0) }   \omega_{HW}(G, c) \times  \omega_{TR}(G, c),
	\qea
where $ \omega_{HW}$ and $ \omega_{TR}$ are as in Proposition \ref{Prop:ComputMoments}.
\end{Prop}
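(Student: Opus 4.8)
The plan is to show that Proposition~\ref{Prop:ComputMoments} and Proposition~\ref{prop:FalseFreenessProperty} are in fact the same formula, written with two different indexing sets for the same sum. Concretely, the content of Proposition~\ref{prop:FalseFreenessProperty} is just that the unfolding/folding construction organizes $\mathcal L^{(\gamma)}$ into a disjoint union
\[
	\mathcal L^{(\gamma)} = \bigsqcup_{(G_0,c_0)\in \mathcal L_2^{(\gamma)}} fold(G_0,c_0),
\]
after which reindexing the sum in Proposition~\ref{Prop:ComputMoments} according to this decomposition gives exactly \eqref{Eq:Distrib1}. So there is nothing probabilistic or analytic left to do; the whole proof is a combinatorial bookkeeping argument about minimal colored cycles on rooted plane trees.

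First I would verify that the \textbf{unfolding trick} is well defined and terminates: starting from $(G,c)\in\mathcal L^{(\gamma)}\setminus\mathcal L_2^{(\gamma)}$, each application strictly increases the number of edges of the underlying tree visited exactly twice (or strictly decreases the number of edges visited $\geq 4$ times), while preserving membership in $\mathcal L^{(\gamma)}$ — one must check that after re-embedding the copy $\hat G$ so that $\hat c$ respects the leftmost-visit rule, the resulting cycle is still a minimal colored cycle, starts at the root, visits all vertices, and has $L$ steps (the step count is preserved because unfolding only duplicates vertices/edges, never adds or removes steps of the cycle). Since the total number of steps $L$ is fixed and the multiplicities are even (this is the point of $\mathcal L_2$: a cycle that is not in $\mathcal L_2$ has some edge of multiplicity $\geq 4$, and the target of unfolding is multiplicity exactly $2$ everywhere), the procedure reaches $\mathcal L_2^{(\gamma)}$ in finitely many steps. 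This shows every element of $\mathcal L^{(\gamma)}$ lies in $fold(G_0,c_0)$ for at least one $(G_0,c_0)\in\mathcal L_2^{(\gamma)}$.

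Next I would check \textbf{disjointness}: if $(G_0,c_0)$ and $(G_0',c_0')$ are distinct elements of $\mathcal L_2^{(\gamma)}$, then $fold(G_0,c_0)\cap fold(G_0',c_0')=\emptyset$. The natural way is to argue that the unfolded form is canonical — i.e., that applying unfolding exhaustively to any $(G,c)\in\mathcal L^{(\gamma)}$ yields a \emph{unique} element of $\mathcal L_2^{(\gamma)}$, independent of the order in which sub-cycles are unfolded. This is a confluence statement; I would prove it by exhibiting $(G_0,c_0)$ intrinsically from $(G,c)$, for instance by recording, for each step of $c$, the ``depth'' of nested returns, and showing the fully-unfolded tree depends only on this data and not on the processing order. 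Together with the previous paragraph this gives the claimed disjoint union, and the folding description of $fold(G_0,c_0)$ as the set reachable from $(G_0,c_0)$ by the folding moves is then just the reverse of unfolding. Finally, since the weights $\omega_{HW}$ and $\omega_{TR}$ of Definitions~\ref{def:HWWeight} and~\ref{def:DWeight} are attached to the pair $(G,c)$ itself (they do not depend on which $(G_0,c_0)$ we folded from), substituting the decomposition of $\mathcal L^{(\gamma)}$ into the formula $\Phi(P)=\sum_{(G,c)\in\mathcal L^{(\gamma)}}\omega_{HW}(G,c)\,\omega_{TR}(G,c)$ of Proposition~\ref{Prop:ComputMoments} immediately produces \eqref{Eq:Distrib1}.

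The main obstacle is the confluence/canonicity of the unfolding procedure — making precise that the fully-unfolded pair does not depend on the order of operations, so that the folding sets genuinely partition $\mathcal L^{(\gamma)}$ rather than merely cover it. The subtlety is that a minimal cycle can have several nested or sibling ``early returns'' and one must check that unfolding one does not interfere with the leftmost-visit normalization needed to unfold another; a careful induction on the tree structure (or on $L$), unfolding deepest returns first, should handle this cleanly. Everything else — that each move preserves $\mathcal L^{(\gamma)}$, that $L$ and the parity of edge multiplicities are invariants, and that the weights are intrinsic to $(G,c)$ — is routine.
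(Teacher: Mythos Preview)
Your proposal is correct and follows essentially the same route as the paper: the paper's ``proof'' consists of the preceding description of the unfolding and folding tricks together with the one-line assertion that ``Two different elements of $\mathcal L^{(\ell)}_2$ have different folding sets,'' after which Proposition~\ref{prop:FalseFreenessProperty} is stated as an immediate consequence of Proposition~\ref{Prop:ComputMoments}. You are simply more explicit than the paper about the confluence/canonicity of unfolding, which the paper asserts without argument.
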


\noindent The false freeness property gives a method to reasonably compute limiting joint moments of heavy Wigner and deterministic matrices:
\begin{enumerate}
	\item Enumerate the elements of $\mathcal L^{(\gamma)}_2$.
	\item Fold the branches of these colored trees.
	\item Then, read the contribution of each element.
\end{enumerate}

\noindent We do not describe how to be sure to obtain all the elements of $\mathcal L^{(\gamma)}$ during the second step of the algorithm, as our purpose is to use this method for relatively small $L$. As an example, we have computed $\Phi( x_1^6 )= 5 a_{1,1}^3 + 6 a_{1,2} a_{1,1}^2 + a_{1,3}  $ in Figure \ref{FatColorCycle}.

\begin{figure}[!h]
\begin{center}
 \includegraphics[width=140mm]{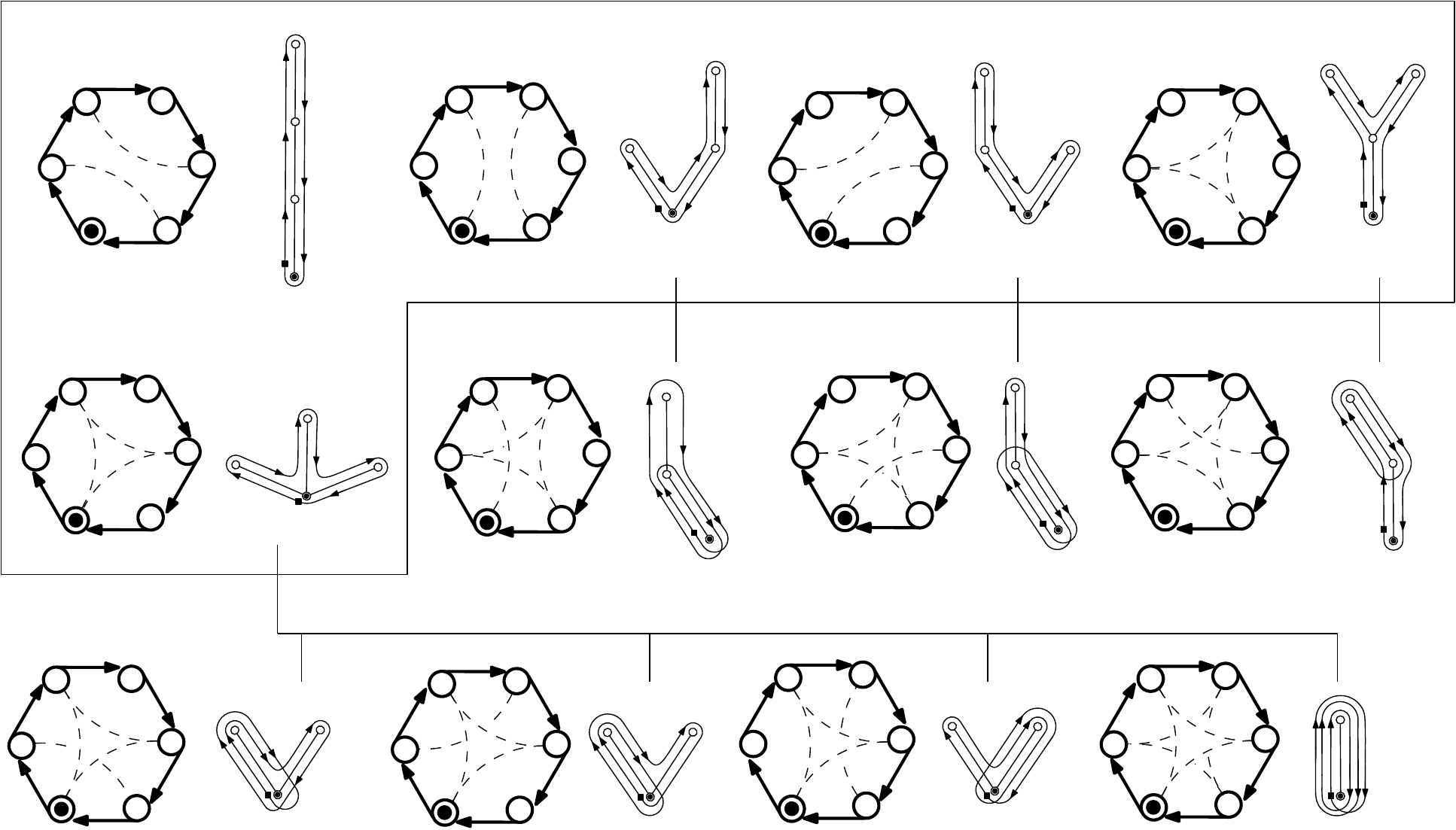}
 \caption{Computation of : $\Phi( x_1^6 )= 5 a_{1,1}^3 + 6 a_{1,2} a_{1,1}^2 + a_{1,3}  $. In the frame, we have enumerate the elements of $\mathcal L^{(1, \dots,1)}_2$. Three of them, the rightmost ones, can be folded in an unique way. The double tree on the bottom of the frame can we folded in four ways, as in Figure \ref{fig:RunningOnTheTree3}. On the left of each minimal cycle on a tree $(G,c)$, we have drawn the permutation $\pi$ of $\{1\etc 6\}$ such that $(G_\pi,c_\pi) =(G,c)$, with the notations of the proof of Proposition \ref{Prop:ComputMoments}. It should be noticed that the partitions on the frame are the dual, in the sense of planar partition, of non crossing pair partitions. Planarity is broken when trees are folded. Then, these partitions are the {\it dual clickable partitions} (see \cite{RYA}).}
 \label{FatColorCycle}
 \end{center}
\end{figure}

\subsection{Proof of Propositions 2.10 and 2.11: the non asymptotic $^*$-freeness of $\mathbf X_N$ and $\mathbf Y_N$}

\begin{proof}[Proof of $f(x_1,x_2) = a_{1,2} a_{2,2}$, where $x_1,x_2$ are heavy Wigner of parameters $(a_{1,k})_{k\geq 1}, (a_{2,k})_{k\geq 1}$] We first expand the quantity
	\eq	
		\lefteqn{ \Phi  \Big (  \big( x_1^2 - \Phi(x_1^2) \big) \big( x_2^2 - \Phi(x_2^2) \big)   \big( x_1^2 - \Phi(x_1^2) \big) \big( x_2^2 - \Phi(x_2^2) \big)  \Big) } \\
	&= &		\Phi( x_1^2 x_2^2 x_1^2 x_2^2) \\
	& & - 2 \Phi(x_1^2) \Phi(x_1^2 x_2^4) - 2 \Phi(x_2^2) \Phi(x_2^2 x_1^4)\\
	& & + 4 \Phi(x_1^2) \Phi(x_2^2) \Phi(x_1^2 x_2^2) + \Phi(x_1^2)^2\Phi(x_2^4) + \Phi(x_2)^2 \Phi(x_1^4)\\
	& & - 4 \Phi(x_1^2)^2 \Phi(x_2^2)^2\\
	& & + \Phi(x_1^2)^2 \Phi(x_2^2)^2.
	\qe
	
\noindent Using the traciality of $\Phi$ (that is $\Phi(PQ) = \Phi(QP)$) and interchanging the roles played by $x_1$ and $x_2$, it is enough to compute $\Phi(x_1^2)$, $\Phi(x_1^2 x_2^2)$, $\Phi(x_1^4)$, $\Phi(x_i^4 x_j^2)$ and $\Phi(x_1^2 x_2^2 x_1^2 x_2^2)$. By the basic properties stated in Corollary \ref{Cor:BasicProp}, we have $\Phi(x_1^2) = a_{1,1}$, $\Phi(x_1^2 x_2^2) = \Phi(x_1^2) \Phi( x_2^2) =a_{1,1} a_{2,1}$ and $\Phi(x_1^4 x_2^2) = \Phi(x_1^4 ) \Phi(x_2^2) = \Phi(x_1^4 ) a_{2,1}$.
\begin{figure}[!h]
\parbox{.5cm}{~}
\parbox{10cm}{  \includegraphics[width=90mm]{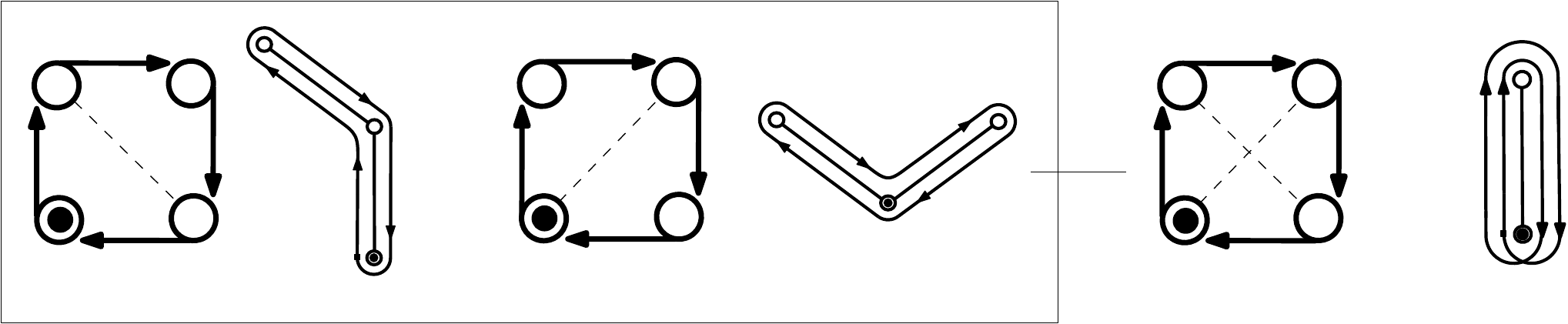}}
\parbox{4cm}{ \caption{\label{CyclesFat} Computation of $\Phi(x_1^4) = 2a_{1,1}^2+ a_{1,2}$.} }
\end{figure}
\begin{figure}[!h]
\begin{center}
 \includegraphics[width=140mm]{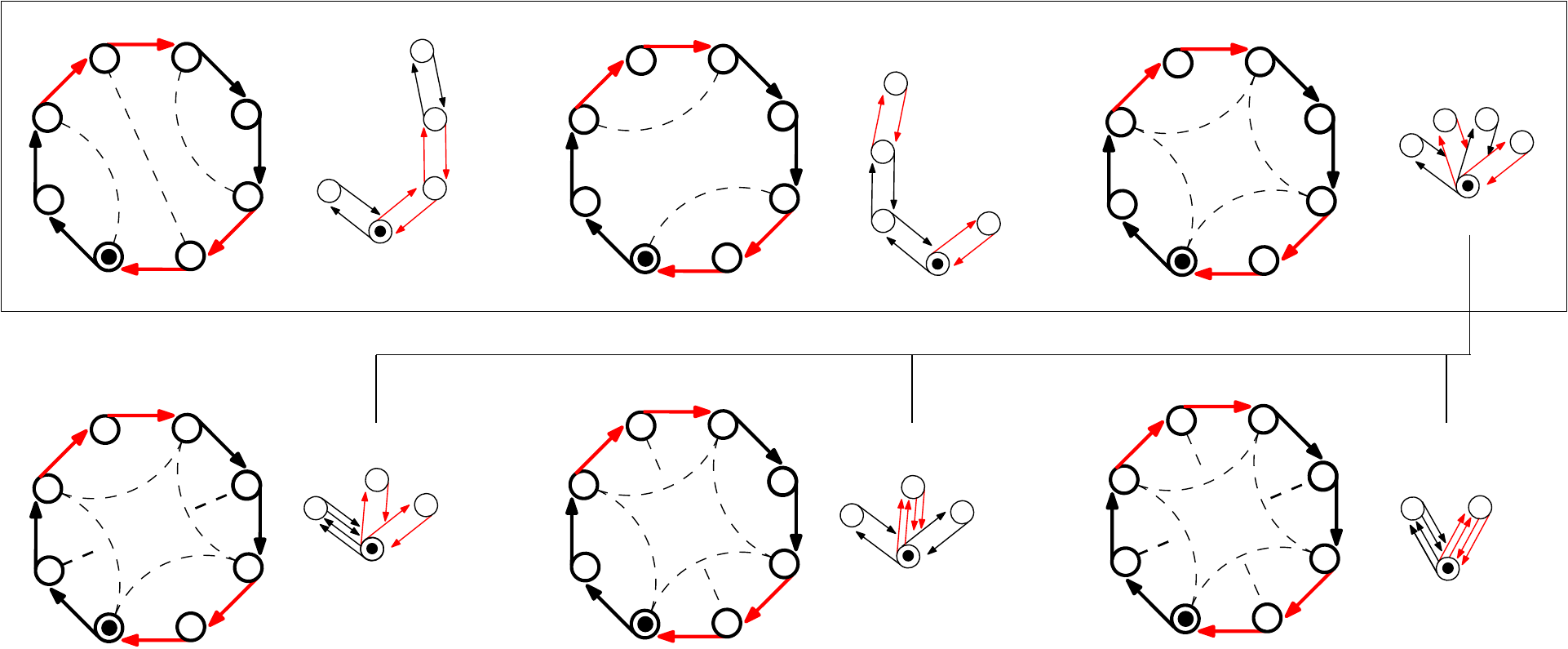}
 \caption{Computation of $\Phi( x_1^2 x_2^2 x_1^2 x_2^2) = 3 a_{1,1}^2 a_{2,1}^2 + a_{1,2} a_{2,1}^2 + a_{1,1}^2 a_{2,2} + a_{1,2} a_{2,2} $. Edges corresponding to $x_1$ are in black, the ones corresponding to $x_2$ are in red. For technical reasons, we have drawn fat trees instead of cycles running on trees.}
 \label{FatColorCycle2}
 \end{center}
\end{figure}
\\
\\The computation of $\Phi(x_1^4 )$ and $\Phi( x_1^2 x_2^2 x_1^2 x_2^2)$ are done in Figures \ref{CyclesFat}  and \ref{FatColorCycle2}, following the algorithm of the false freeness property (Proposition \ref{Prop:FalseFreeness}). As we do not consider deterministic matrices, we do not need to open boxes in the vertices as in Figure \ref{fig:NodesToTestGraphs3}. This gives 
	\eq	
		\lefteqn{ \Phi  \Big (  \big( x_1^2 - \Phi(x_1^2) \big) \big( x_2^2 - \Phi(x_2^2) \big)   \big( x_1^2 - \Phi(x_1^2) \big) \big( x_2^2 - \Phi(x_2^2) \big)  \Big) } \\
	& = & 3 a_{1,1}^2 a_{2,1}^2 + a_{1,2} a_{2,1}^2 + a_{1,1}^2 a_{2,2} + a_{1,2} a_{2,2} \\
	& & - 2a_{1,1}( 2 a_{2,1}^2 + a_{2,2}) - 2a_{2,1}( 2 a_{1,1}^2 + a_{1,2}) \\
	& & + 4 a_{1,1}^2 a_{2,1}^2 + a_{1,1}( 2 a_{2,1}^2 + a_{2,2}) + a_{2,1}( 2 a_{1,1}^2 + a_{1,2}) \\
	& & - 3 a_{1,1}^2 a_{2,1}^2\\
	& & =a_{1,2} a_{2,2}.
	\qe
\end{proof}
\begin{proof}[Proof of Proposition \ref{Prop:NonAsymptFree} and Proposition \ref{Prop:NonAsymptFree2} 1.] We expand the quantity
	\eq
		\lefteqn{ \Phi  \Big (  \big( x_1^2 - \Phi(x_1^2) \big) \big( y^2 - \Phi(y^2) \big)   \big( x_1^2 - \Phi(x_1^2) \big) \big( y^2 - \Phi(y^2) \big)  \Big)} \\
	& = & \Phi( x_1^2 y^2 x_1^2 y^2) \\
	& & - 2 \Phi(x_1^2) \Phi(x_1^2 y^4) - 2 \Phi(y^2) \Phi(y^2 x_1^4)\\
	& & + 4 \Phi(x_1^2) \Phi(y^2) \Phi(x_1^2 y^2) + \Phi(x_1^2)^2\Phi(y^4) + \Phi(y^2)^2 \Phi(x_1^4)\\
	& & - 4 \Phi(x_1^2)^2 \Phi(y^2)^2\\
	& & + \Phi(x_1^2)^2 \Phi(y^2)^2
	\qe
	
\noindent We have to compute $\Phi(x_1^2)$, $\Phi(x_1^2 y^2)$, $\Phi(x_1^4)$, $\Phi(x_i^4 y^2)$ and $\Phi(x_1^2 y^2 x_1^2 y^2)$. Using again the basic properties of Corollary \ref{Cor:BasicProp} and the computation of $\Phi(x_i^4)$ of Figure \ref{CyclesFat}, the only term we have to compute is $\Phi(x_1^2 y^2 x_1^2 y^2)$.
\begin{figure}[!h]
\begin{center}
 \includegraphics[width=150mm]{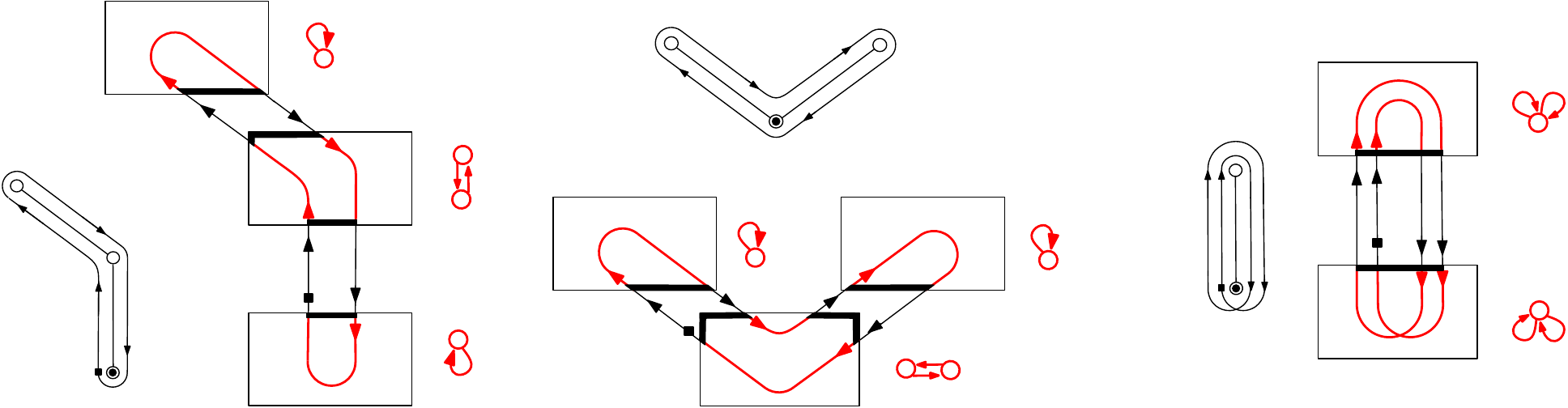}
 \caption{Computation of $\Phi \big( x_1P_1(\mathbf y) \dots x_1 P_4(\mathbf y) \big )$. We consider the enumeration of cycles of length 4 running on a tree of Figure \ref{CyclesFat}, and them open boxes on vertices.}
 \label{Computationmoments3}
 \end{center}
\end{figure}
\\
\\In Figure \ref{Computationmoments3}, we have computed $\Phi \big( x_1P_1(\mathbf y) \dots x_1 P_4(\mathbf y) \big )$ for any $^*$-polynomials $P_1\etc P_4$ and obtained
	\eq
	\Phi \big( x_1P_1(\mathbf y) \dots x_1 P_4(\mathbf y) \big ) & = &	a_{1,1}^2 \Phi \big( P_1(\mathbf y) P_3(\mathbf y) \big) \Phi \big( P_2(\mathbf y) \big) \Phi \big( P_4(\mathbf y) \big)  \\
		&  & + a_{1,1}^2 \Phi \big( P_1(\mathbf y) \big)  \Phi \big( P_2(\mathbf y) P_4(\mathbf y) \big) \Phi \big( P_3(\mathbf y) \big) \\
		& & + a_{1,2} \Phi^{(2)}\big( P_1(\mathbf y), P_3(\mathbf y)\big)   \Phi^{(2)}\big( P_2(\mathbf y), P_4(\mathbf y)\big).
	\qe

\noindent Specifying $P_1=P_3=1$ and $P_2=P_4=y^2$, we get
	\eq
		 \Phi( x_1^2 y^2 x_1^2 y^2) = a_{1,1}^2 \Phi(y^2)^2 + a_{1,1}^2 \Phi(y^4) + a_{1,2} \Phi^{(2)}(y^2,y^2).
	\qe
We then obtain
	\eq
		\lefteqn{ \Phi  \Big (  \big( x_1^2 - \Phi(x_1^2) \big) \big( y^2 - \Phi(y^2) \big)   \big( x_1^2 - \Phi(x_1^2) \big) \big( y^2 - \Phi(y^2) \big)  \Big)} \\
	& = & a_{1,1}^2 \Phi(y^2)^2 + a_{1,1}^2 \Phi(y^4) + a_{1,2} \Phi^{(2)}(y^2,y^2)\\
	& & - 2 a_{1,1}^2 \Phi(y^4) - 2  \Phi(y^2)^2 ( 2 a_{1,1}^2 + a_{1,2})\\
	& & + 4 a_{1,1}^2 \Phi(y^2)^2 + a_{1,1}^2\Phi(y^4) + \Phi(y^2)^2 ( 2 a_{1,1} + a_{1,2})\\
	& & - 3 a_{1,1}^2 \Phi(y^2)^2\\
	& = & a_{1,2} \big( \Phi^{(2)}(y^2, y^2) -  \Phi(y^2)^2 \big) =  a_{1,2}  \Phi^{(2)}\big( y^2 - \Phi(y^2), y^2 - \Phi(y^2) \big),
	\qe
	
\noindent where in the last equality we have used the bi-linearity of $\Phi^{(2)}$.
\end{proof}

\begin{proof}[Proof of Proposition \ref{Prop:NonAsymptFree} 2.] If $Y_N$ is a diagonal matrix, then it turns out that $\Phi^{(2)}(P,Q) = \Phi(PQ)$ for any $^*$-polynomials $P,Q$, and the claim follows directly.
\end{proof}

\section[Proof of the Schwinger Dyson equations]{Proof of the Schwinger Dyson system of equations for heavy Wigner and diagonal matrices, Theorem 2.12}

\noindent The idea of the proof of Theorem \ref{th:SchwingerDyson} is to classify, in the combinatorial approach by cycles running on trees, those for which the cycle visits a fixed number of times the first edge of the tree. 
\\
\\Before providing Theorem \ref{th:SchwingerDyson}, we apply theses equations to give an other computation of $\Phi[x_1^2x_2^2x_1^2x_2^2]$. First, we enumerate the decompositions
\begin{eqnarray*}
	x_1^2x_2^2x_1^2x_2^2 & = & (x_1 \times 1 \times x_1) x_2^2x_1^2x_2^2\\
		& = & (x_1  \times x_1x_2^2 \times x_1) x_1x_2^2\\
		& = & (x_1 \times  x_1x_2^2x_1 \times x_1) x_2^2\\
		& = & (x_1  \times 1 \times x_1) x_2^2 (x_1  \times 1 \times x_1) x_2^2.
\end{eqnarray*}
Then, by Theorem \ref{th:SchwingerDyson} we get
\begin{eqnarray*}
	\Phi[ x_1^2x_2^2x_1^2x_2^2 ]  & = & a_{1,1}\Big( \Phi[1] \Phi[x_2^2x_1^2x_2^2] + \Phi[x_1x_2^2]\Phi[ x_1x_2^2] + \Phi[ x_1x_2^2x_1]\Phi[ x_2^2] \Big)\\
		& & + a_{1,2} \Phi^{(2)}(1,1)\Phi^{(2)}(x_2^2,x_2^2)\\
		& = &  a_{1,1}\Big( \Phi[x_1^2] \Phi[x_2^4] + 0 + \Phi[ x_1^2]\Phi[x_2^2]^2  \Big)+ a_{1,2}\Phi^{(2)}(x_2^2,x_2^2)\\
		& = &  a_{1,1}^2a_{2,1}^2 + a_{1,1}^2 \Phi[x_2^4]  + a_{1,2}\Phi^{(2)}(x_2^2,x_2^2),
\end{eqnarray*}
where we have used the facts that $\Phi[x_1^nx_2^m]=\Phi[x_1^n] \Phi[x_2^m]$ for any $n,m\geq 1$ and that $\Phi[x_i^2]=a_{i,1}$ for $i=1,2$. By Theorem \ref{th:SchwingerDyson}, one has with a similar computation
\begin{eqnarray*}
	\Phi[x_2^4] & = & a_{2,1} \Big(  \Phi[1]\Phi[x_2^2] + \Phi[x_2]\Phi[x_2] + \Phi[x_2]\Phi[1] \Big)+a_{2,2}  \Phi^{(2)}(1,1) \Phi^{(2)}(1,1)\\
		& = & 2 a_{2,1}^2 +a_{2,2}.
\end{eqnarray*}
To compute $\Phi^{(2)}(x_2^2,x_2^2)$ with Theorem \ref{th:SchwingerDyson}, we enumerate the decompositions
		$$(x_2^2,x_2^2) = \Big( (x_2 \times 1\times x_2)1, x_2^2 \Big) =  \Big( (x_2 \times 1\times x_2)1,1(x_2 \times 1\times x_2)1\Big).$$
So we have
\begin{eqnarray*}
	\Phi^{(2)}(x_2^2,x_2^2) & = & a_{2,1}\Phi[1] \Phi^{(2)}(1,x_2^2) + a_{2,2}\Phi^{(2)}(1,1)\Phi^{(3)}(1,1,1)\\
		& = & a_{2,1}^2 +a_{2,2}.
\end{eqnarray*}
We then get as expected
\begin{eqnarray*}
	\Phi[ x_1^2x_2^2x_1^2x_2^2 ] & = & a_{1,1}^2a_{2,1}^2 + a_{1,1}^2(2a_{2,1}^2+a_{2,2}) + a_{1,2}(a_{2,1}^2+a_{2,2})\\
		& = & 3a_{1,1}^2a_{2,1}^2 + a_{1,1}^2a_{2,2} + a_{1,2}a_{2,1}^2 + a_{1,2}a_{2,2}.
\end{eqnarray*}

\begin{proof}[Proof of Theorem \ref{th:SchwingerDyson}] For clarity of the exposition, we start by proving (\ref{TraceFormEq2}) for $K=1$, that is: for any $j=1\etc p$ and any monomial $P$,
\begin{equation}\label{TraceFormEq1}
		\Phi(x_jP) = \sum_{k\geq 1} a_{j,k}\sum_{ x_jP = (x_jL_1x_j)R_1 \dots (x_jL_kx_j)R_k } \Phi^{(k)}  ( L_1 \etc L_k \big ) \Phi^{(k)}  ( R_1 \etc R_k \big ).
\end{equation}

\noindent We write $P = P_1(\mathbf y) \times x_{\gamma(2)} P_2(\mathbf y) \dots x_{\gamma(L)} P_L(\mathbf y)$ and set $\gamma(1)=j$. By Proposition \ref{Prop:ComputMoments},
	\eq
		\Phi(x_jP) = \sum_{(G,c) \in \mathcal L^{(\gamma)}}   \omega_{HW}(G, c) \times  \omega_{TR}(G, c),
	\qe
where $\omega_{HW}$ and $\omega_{TR}$ are given in Definitions \ref{def:HWWeight} and \ref{def:DWeight}.
\\
\\{\it Step 1: Cycle visiting $2K$ times the first edge}

\noindent Let $(G,c)$ in $\mathcal L^{(\gamma)}$. The root of $G$ is called the vertex number $1$, the second vertex visited by $c$ is called the number $2$. Saying that the undirected edge $\{1,2\}$ is visited exactly $2K$ times is equivalent to say that
\begin{enumerate}
\item there exist cycles $ d^{(1)} \etc d^{(K)}$ starting at the vertex $2$,
\item there exist cycles $ e^{(1)} \etc e^{(K)}$ starting at the vertex $1$,
\item theses cycles do not visit $\{1,2\}$,
\item $c$ can be written
\begin{equation}\label{SDProofDecC}
		c =  a \circ d^{(1)} \circ a^*  \circ e^{(1)} \circ a \circ d^{(2)} \circ a^* \circ e^{(2)} \circ   \dots  \circ a  \circ d^{(K)} \circ  a^*  \circ e^{(K)},
\end{equation}
\end{enumerate}
where $\circ$ denotes the composition of paths, $a=(1,2)$ and $a^* = (2,1)$. See Figure \ref{fig:Schwinger} for an example.

\begin{figure}[!h]
\begin{center}
\includegraphics[height=70mm]{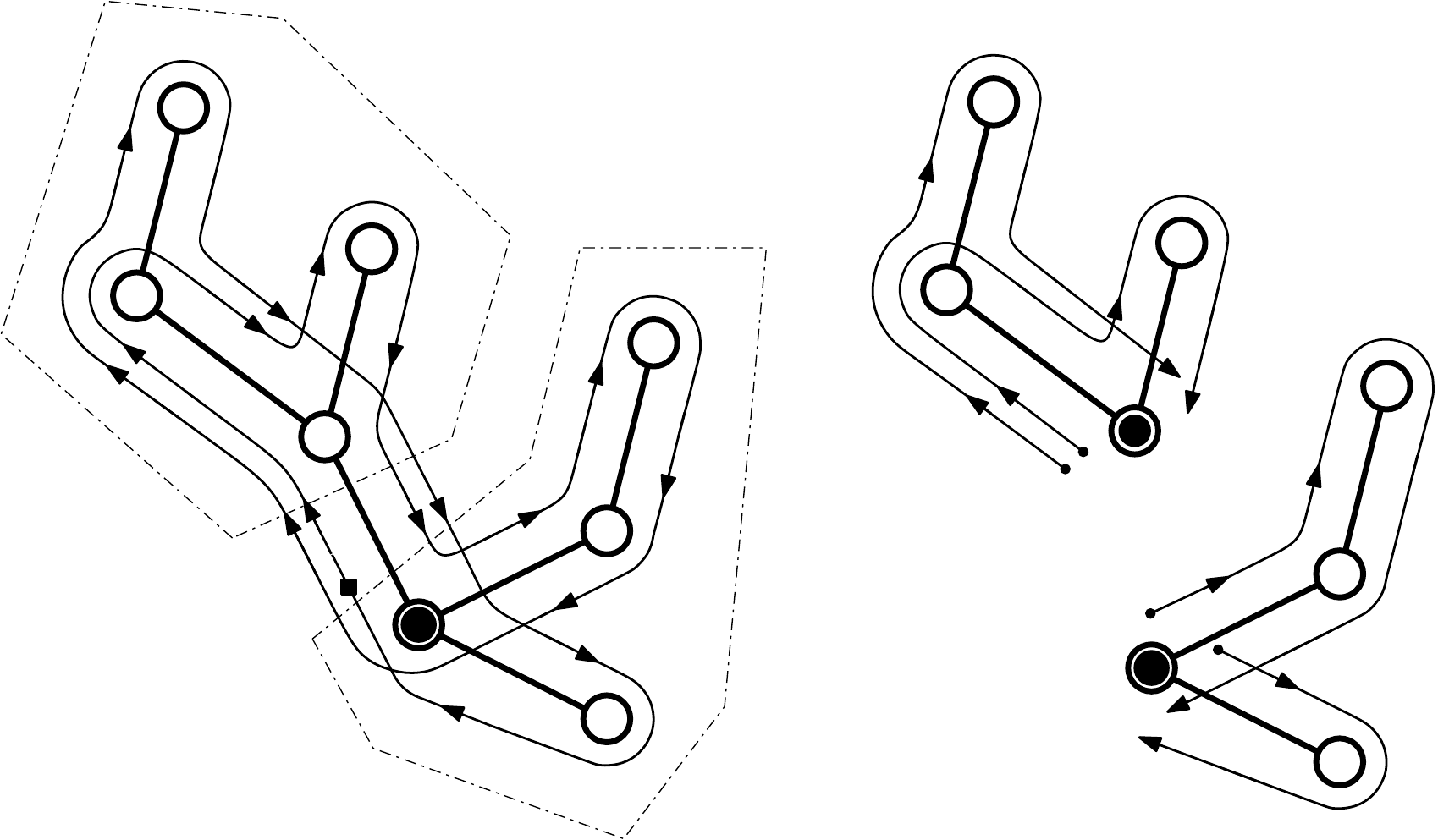}\caption{Left: the cycle visits $4$ times the first vertex. Right: the two couples of cycles induces.}\label{fig:Schwinger}
\end{center}
\end{figure}

\noindent Assume that $c$ is of this form. Since the edge $\{1,2\}$ can only be visited by steps of color $\gamma(1)$, we can write 
\begin{equation}\label{SDProofDecL}
		\big(\gamma(1) \etc \gamma(L)\big)=(\gamma(1), \alpha^{(1)},  \gamma(1), \beta^{(1)} , \gamma(1), \alpha^{(2)},\gamma(1), \beta^{(2)} \etc \gamma(1), \alpha^{(K)},\gamma(1), \beta^{(K)}),
\end{equation}
where for any $k=1\etc K$, $\alpha^{(k)}=\big( \alpha^{(k)}(1) \etc \alpha^{(k)} (L_{\alpha,k}) \big)$ and $\beta^{(k)}=\big(\beta^{(k)}(1) \etc \beta^{(k)}(L_{\beta,k}) \big)$ are families of integers in $\{1 \etc p\}$ for any $k=1\etc K$. The integer $ L_{\alpha,k} $ is the length of the cycle $\alpha^{(k)}$ and $L_{\beta,k}$ is the length of the cycle $\beta^{(k)}$.
\\
\\The two cycles $d   =   d^{(1)} \circ \dots \circ d^{(K)}$ and $e = e^{(1)} \circ \dots \circ e^{(K)}$ are induced by $c$ on disjoint subgraphs $G_d$ and $G_e$ of $G$ respectively. For any $k=1\etc K$, the cycle $d^{(k)}$ has steps of color $\alpha^{(k)}$, so that the steps of $d$ are colored by $\alpha = (\alpha^{(1)} \etc \alpha^{(K)})$. The same holds for $e$ with $\beta = (\beta^{(1)} \etc \beta^{(K)})$.
\\
\\Hence, by rooting $G_d$ on $2$ and $G_e$ on $1$, we get that $(G_d,d)$ belongs to $\mathcal L^{(\alpha)}$ and $(G_e,e)$ belongs to $\mathcal L^{(\beta)}$. They are not typical elements of these sets, in the sense that for any $k\geq 1$, the cycle $d$ always comes back to the root of its tree after $L_{\alpha,1}+ \dots +L_{\alpha,k}$ steps. Hence the following definition.

\begin{Def}[Chain of cycles running on trees]~\label{Def:ChainCycles}
\\Let $K\geq 1$ be an integer, $\mathbf L= (L_1 \etc L_K)$ be a family of integers and $\gamma : \{1\etc L\} \to \{1\etc p\}$, where $L=L_1 + \dots +L_K$. We set $\mathcal  L^{(\gamma)}_{\mathbf L}$ the set of all couples $(L,c)$ in $\mathcal  L^{(\gamma)}$, such that $c$ can be written $c = c_1 \circ \dots \circ c_K$, where for any $k=1\etc K$ the cycle $c_k$ is of length $L_k$.
\end{Def}

\noindent If we denote $\mathbf L_\alpha = (L_{\alpha,1} \etc L_{\alpha,K}) $ and $\mathbf L_\beta= (L_{\beta,1} \etc L_{\beta,K}) $, then we get that actually $(G_d,d)$ belongs to $\mathcal L^{(\alpha)}_{\mathbf L_\alpha}$ and that $(G_e,e)$ belongs to $\mathcal L^{(\beta)}_{\mathbf L_\beta}$.
\\
\\{\it Step 2: Reciprocal construction}
\\Let $K\geq 1$ be an integer and consider a decomposition
\begin{equation}\label{SDProofDecLRec}
		\big(\gamma(1) \etc \gamma(L)\big)=(\gamma(1), \alpha^{(1)},  \gamma(1), \beta^{(1)} , \gamma(1), \alpha^{(2)},\gamma(1), \beta^{(2)} \etc \gamma(1), \alpha^{(K)},\gamma(1), \beta^{(K)}),
\end{equation}
where for any $k=1\etc K$ one has $\alpha^{(k)}$ is in $\{1 \etc p\}^{L_{\alpha,k}}$ and $\beta^{(k)}$ in $\{1 \etc p\}^{L_{\beta,k}}$ for sequences of integers $\mathbf L_\alpha= (L_{\alpha,1} \etc L_{\alpha,K}) $ and $\mathbf L_\beta = (L_{\beta,1} \etc L_{\beta,K} ) $. Define $\alpha= (\alpha^{(1)} \etc \alpha^{(K)})$ and $\beta= (\beta^{(1)} \etc \beta^{(K)})$. Let $(G_d,d)$ in $\mathcal L^{(\alpha)}_{\mathbf L_\alpha}$ and $(G_e,e)$ in $\mathcal L^{(\beta)}_{\mathbf L_\beta}$. We write $d =  d^{(1)} \circ \dots \circ d^{(K)}$ and $e = e^{(1)}  \circ \dots \circ e^{(K)}$ with the notation for chains of cycles (Definition \ref{Def:ChainCycles}). As we embed the graphs $G_d$ and $G_e$, link their roots by an new edge, extend the cycles into a cycle $c_{d,e}$ as in (\ref{SDProofDecC}) and set the root of $G_{d,e}$ to be the root of $G_{e}$, we get an element $(G_{d,e},c_{d,e})$ in $\mathcal L^{(\gamma)}$. 
\\
\\As $(G_d,d)$ and $(G_e,e)$ are the chains of cycles running on a tree of the previous step when we start with $(G_{d,e},c_{d,e})$, we have proved that
	\eqa\label{eq:SplitCycle}
		\Phi(x_jP)  = \sum_{K\geq 1} \   \sum_{  \substack{ (\alpha^{(1)} \etc \alpha^{(K)})\\ (\beta^{(1)}\etc \beta^{(K)}) \\ \textrm{ as in (\ref{SDProofDecL}) }}} \ \sum_{  \substack{ (G_d,d ) \in \mathcal L^{(\alpha)}_{\mathbf L_\alpha }  \\ (G_e,e) \in \mathcal L^{(\beta)}_{\mathbf L_\beta }} } \omega_{HW} (G_{d,e},c_{d,e} ) \times \omega_{TR} (G_{d,e},c_{d,e}  ).
	\qea

\noindent{\it Step 3: Computation of the weights}
\\We have obviously $\omega_{HW} (G_{d,e},c_{d,e} ) = a_{j,K} \ \omega_{HW} (G_d,d)  \times \omega_{HW} (G_e,e )$. For the weight $\omega_{TR}$, it is important to take care about the dependence on the polynomials involved. We then write $ \omega_{TR} =  \omega_{TR}(P)$ in the notation of Definition \ref{def:DWeight}. In the case considered in that proof, the polynomial has been denoted $x_jP$. Recall that 
	\eqa
		 \omega_{TR}(x_jP)(G_{d,e}, c_{d,e}) = \prod_{ v \textrm{ vertex of } G_{d,e}}  \tau[T_v],
	\qea

\noindent where the $^*$-graph tests are obtained by opening boxes on the vertices of $G_{d,e}$ as in Figure \ref{fig:NodesToTestGraphs3}. Since the matrices $\mathbf Y_N$ are diagonal, we actually have a much simpler expression for this weight. First, remark that the diagonality implies that for any vertex $v$ of $G_{d,e}$, $\tau[T_v] = \tau[ T_v^\pi]$ where $\pi$ is the partition of the vertices of $T_v$ with only one block. Hence, this quantity is equal to $\Phi\big( \prod_{n} P_n(\mathbf y) \big)$, where the product is over all integers $n=1\etc L$ such that the $n$-th step of $c_{d,e}$ is incident in $v$. For $v$ different that the roots of $G_d$ and $G_e$, this quantity is the same as if we replace $c_{d,e}$ by $c_d$ or $c_e$ (depending if the vertex comes from $G_d$ or $G_e$). For $v$ the root of $G_d$ or $G_e$, we have to take into account the steps of $c_{d,e}$ on the edge $\{1,2\}$.
\\
\\Given $(d,e)$ as in the sum (\ref{eq:SplitCycle}), we get a decomposition of the polynomial $x_jP$:
	\eq
		 x_jP = (x_jL_1x_j)R_1 \dots (x_j L_K x_j)R_K,
	\qe

\noindent where the position of the $x_j$'s corresponds to the position of the $\gamma(1)$'s in (\ref{SDProofDecLRec}), where we have decomposed $\big(\gamma(1) \etc \gamma(K) \big)$. As $x_jP =  x_{\gamma(1)} P_1(\mathbf y) \times x_{\gamma(2)} P_2(\mathbf y) \times  \dots  \times x_{\gamma(L)} P_L(\mathbf y)$, we can write $L_k = P_{i_k}(\mathbf y) \tilde L_k P_{j_k-1}(\mathbf y)$ and $R_k = P_{j_k}(\mathbf y) \tilde R_k P_{i_{k+1}-1}(\mathbf y)$ for any $k=1\etc K$. The integers $i_k$'s corresponding to the steps where the cycle runs on $(1,2)$, the integers $j_k$ to the steps where it runs on $(2,1)$.
\\
\\We set $\bar L_k = \tilde L_k P_{j_k-1}(\mathbf y) P_{i_k}(\mathbf y) $, $\bar R_k = \tilde R_k P_{i_{k+1}-1}(\mathbf y)P_{j_k}(\mathbf y)$ and then $\bar L = \bar L_1 \dots \bar L_K$ and $\bar R = \bar R_1 \dots \bar R_K$. As the polynomials $P_{j_k-1}(\mathbf y)$ and $P_{i_k}(\mathbf y)$ (respectively $ P_{i_{k+1}-1}(\mathbf y)$ and $P_{j_k}(\mathbf y)$) contribute in the root of $G_d$ (respectively $G_e$), we get
	\eqa
		 \omega_{TR}(x_jP)(G_{d,e}, c_{d,e}) = \omega_{TR}[\bar L](G_{d}, c_{d})  \times  \omega_{TR}[\bar R](G_{e}, c_{e}).
	\qea

\noindent {\it Step 4: Conclusion of the combinatorial decomposition}
\\We have obtained that
	\eqa
		\nonumber \Phi(x_jP) & =& \sum_{K\geq 1} \  a_{j,K} \sum_{  \substack{ \alpha, \beta \\ \textrm{ as in (\ref{SDProofDecL})} }} \bigg( \sum_{  (G_d,d ) \in \mathcal L^{(\alpha)}_{\mathbf L_\alpha }  } \omega_{HW} (G_{d},c_{d} ) \times \omega_{TR}(L) (G_{d},c_{d}  )\\
			& &  \times \sum_{(G_e,e) \in \mathcal L^{(\beta)}_{\mathbf L_\beta }}\omega_{HW} (G_{e},c_{e} ) \times \omega_{TR}(R) (G_{e},c_{e}  ) \bigg). \label{eq:CombForm}
	\qea
	
\noindent The classification in the sum over $\alpha, \beta$ is in correspondence with the number of way we can decompose $x_jP$ into
	\eq
		 x_jP = (x_jL_1x_j)R_1 \dots (x_j L_K x_j)R_K.
	\qe
	
\noindent It remains to interpret the combinatorial terms in terms of the multilinear forms $(\Phi^{(K)})_{K\geq 1}$.

\begin{Prop}[Proposition \ref{Prop:ComputMoments} continued]~\label{Prop:ComputMoments2}
\\For any polynomial $P_1 \etc P_K$ of the form $P_k = x_{\gamma_k(1)} P_{k,1}(\mathbf y) \dots x_{\gamma_k(L_k)} P_{L_k}(\mathbf y), \ k \geq 1$, one has
	\eqa
		\Phi^{(K)}(P_1 \etc P_K) = \sum_{(G,c) \in \mathcal L^{(\gamma)}_{\mathbf L}}   \omega_{HW}(G, c) \times  \omega_{TR}(P)(G, c),
	\qea
where $\mathbf L = (L_1 \etc L_K)$, $\gamma = (\gamma_1 \etc \gamma_K)$ seen as a map $\{1 \etc L_1 + \dots + L_K\} \to \{1\etc p\}$, and $P = P_1 \dots P_K$. The weights $\omega_{HW}$ and $\omega_{TR}$ are the same as in proposition \ref{Prop:ComputMoments}.
\end{Prop}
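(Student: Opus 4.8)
The proof is the proof of Proposition \ref{Prop:ComputMoments} run on a bouquet of cycles instead of a single cycle. Writing $P_k = x_{\gamma_k(1)} P_{k,1}(\mathbf y) \dots x_{\gamma_k(L_k)} P_{k,L_k}(\mathbf y)$, let $T_{P_k}$ be the alternating cyclic $^*$-test graph (black $x$-edges, red $\mathbf y$-edges) attached to $P_k$ as in Figure \ref{SimpleCycle2}, with its distinguished root vertex. First I would form the connected $^*$-test graph $T$ in the variables $\mathbf x$ and $P_{1,1}(\mathbf y) \etc P_{K,L_K}(\mathbf y)$ obtained by gluing $T_{P_1} \etc T_{P_K}$ along their roots into one common vertex $r$. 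Since $(A_1 \circ \dots \circ A_K)(i,i) = \prod_k A_k(i,i)$, one has $\frac 1 N \Tr\big[ P_1(\mathbf X_N, \mathbf Y_N) \circ \dots \circ P_K(\mathbf X_N, \mathbf Y_N) \big] = \tau_N\big[ T(\mathbf X_N, \mathbf Y_N) \big]$, and $T$ is cyclic: starting from $r$ one traverses the loops $T_{P_1} \etc T_{P_K}$ in succession, and along this closed walk $x$-edges and $\mathbf y$-edges keep alternating. Hence $\Phi^{(K)}(P_1 \etc P_K) = \Nlim \esp\big[ \tau_N[ T(\mathbf X_N, \mathbf Y_N)] \big] = \tau[T]$.

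Next I would apply the traffic-asymptotic freeness of $X_1\toN \etc X_p\toN, \mathbf Y_N$ (Theorem \ref{MainTh}) exactly as in \eqref{eq:ApplicFree}: from $\tau_N[T] = \sum_\pi \tau_N^0[T^\pi]$ and the fact that $\tau^0[T^\pi]$ survives in the limit only when $T^\pi$ is a free product in the families $(x_1) \etc (x_p), \big( P_{1,1}(\mathbf y) \etc P_{K,L_K}(\mathbf y) \big)$, one gets $\tau[T] = \sum_\pi \mathbf 1_{T^\pi \textrm{ is a free product}} \prod_{\tilde T} \tau^0[\tilde T]$, the product being over all connected components of $T^\pi$ labelled by a single one of the families among the $(x_j)$'s and the $\mathbf y$-polynomials. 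From here the combinatorial bookkeeping is verbatim that of Proposition \ref{Prop:ComputMoments}: a partition $\pi$ makes $T^\pi$ a free product precisely when, after merging its $\mathbf y$-components, one obtains a fat tree (the $x$-components being themselves fat trees, by Proposition \ref{Prop:DistrHeavy}); unfolding $T$ along such a $\pi$ produces a rooted tree $G$ in the complex plane together with a closed minimal cycle $c$ on it; the heavy Wigner edge-contributions factor out as $\omega_{HW}(G,c)$; and summing $\prod_{T'} \tau^0[T']$ over all $\pi$ unfolding to the same $(G,c)$ recombines, through the relation \eqref{eq:InjStand} between injective and standard traces applied at each vertex $v$, into $\omega_{TR}(P)(G,c) = \prod_v \tau[T_v]$. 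The weight formulas require no change, since Definitions \ref{def:HWWeight} and \ref{def:DWeight} are already stated for an arbitrary $(G,c) \in \mathcal L^{(\gamma)} \supseteq \mathcal L^{(\gamma)}_{\mathbf L}$.

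The one genuinely new point, which I expect to be the main obstacle, is to identify the set of $(G,c)$ that occur. Because $T$ is a \emph{bouquet} of $K$ cycles sharing the vertex $r$, and each sub-loop $T_{P_k}$ is itself a closed walk based at $r$, the unfolded walk $c$ necessarily returns to the image of $r$ after $L_1$ steps, after $L_1 + L_2$ steps, \dots, after $L_1 + \dots + L_{K-1}$ steps; equivalently $c = c_1 \circ \dots \circ c_K$ with $c_k$ of length $L_k$, i.e. $(G,c)$ ranges over $\mathcal L^{(\gamma)}_{\mathbf L}$ of Definition \ref{Def:ChainCycles} with $\gamma = (\gamma_1 \etc \gamma_K)$ and $\mathbf L = (L_1 \etc L_K)$. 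Conversely, every element of $\mathcal L^{(\gamma)}_{\mathbf L}$ arises from some free-product $\pi$, by the same reciprocal construction used in the proof of Proposition \ref{Prop:ComputMoments} (plant the sub-loops at the root in the prescribed order, then read off $\pi$ from the resulting folding). Combining the three ingredients gives $\Phi^{(K)}(P_1 \etc P_K) = \sum_{(G,c) \in \mathcal L^{(\gamma)}_{\mathbf L}} \omega_{HW}(G,c)\, \omega_{TR}(P)(G,c)$, which is the claim.
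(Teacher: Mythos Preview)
Your proposal is correct and is essentially the paper's own argument. The only cosmetic difference is that the paper makes the reduction to Proposition \ref{Prop:ComputMoments} literal by introducing the partition $\pi_0$ of the vertices of the single cycle $T_P$ (for $P=P_1\cdots P_K$) which identifies the $K$ root vertices, observes $T=T_P^{\pi_0}$, and then rewrites the sum over partitions of $T$ as the sum over partitions $\pi$ of $T_P$ coarser than $\pi_0$; the constraint ``$\pi$ coarsens $\pi_0$'' is exactly your return-to-root condition $c=c_1\circ\cdots\circ c_K$ with $|c_k|=L_k$, i.e.\ $(G,c)\in\mathcal L^{(\gamma)}_{\mathbf L}$.
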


\begin{proof}[Proof of Proposition \ref{Prop:ComputMoments2}] Based on formula (\ref{eq:ApplicFree}) given by the traffic asymptotic freeness of $\mathbf X_N$ and $\mathbf Y_N$, this proposition is obtained by a minor modification of the proof of Proposition \ref{Prop:ComputMoments}.
\\
\\Denote by $T_{P_1} \etc T_{P_K}$ and $T_P$ the $^*$-test graphs obtained as in Figure \ref{SimpleCycle2} for polynomials $P_1 \etc P_K$ and $P$ respectively. Consider the $^*$-test graph $T$ obtained by merging the roots of $T_{P_1} \etc T_{P_K}$. Then, $\Phi^{(K)}(P1 \etc P_K) = \tau[T]$, where $\tau$ is the limiting distribution of traffics of $(\mathbf X_N, \mathbf Y_N, \mathbf Y_N^*)$.
\\
\\Let $\pi_0$ be the partition of $\{1\etc 2(L_1 + \dots + L_K)\}$ that puts in a same block $\{1, 2L_1+1, 2(L_1+L_2)+1 \etc 2(L_1+ \dots + L_{K-1})+1 \}$ and let alone the other integers. Remark that $T = T_P^{\pi_0}$. By (\ref{eq:ApplicFree}), one has
	\eq
		\Phi^{(K)}(P_1 \etc P_K)  & = &  \sum_{\pi \in \mathcal P(2(L_1 + \dots + L_K))} \mathbf 1_{( T^\pi \textrm{ is a free product} )} \prod_{ \tilde T }   \tau^0 [ \tilde T ] \\
		& = & \sum_{\pi \leq \pi_0} \mathbf 1_{( T_P^\pi \textrm{ is a free product} )} \prod_{ \tilde T }   \tau^0 [ \tilde T ],
	\qe

\noindent where $\pi \leq \pi_0$ means that $\pi$ is a sub-partition of $\pi_0$. We find the result with the same reasoning as in Proposition \ref{Prop:ComputMoments}, as we realize that the condition $\pi \leq \pi_0$ exactly means that the cycle comes back at the root after $2L_1, 2L_2, \dots$ and $2L_{K-1}$ steps.
\end{proof} 

\noindent By (\ref{eq:CombForm}) and Proposition, we get 
	\eq
		\Phi(x_jP) & =& \sum_{K\geq 1} \  a_{j,K} \sum_{    x_jP = (x_jL_1x_j)R_1 \dots (x_j L_K x_j)R_K }   \Phi^{(K)}( \bar L_1 \etc \bar L_K) \times \Phi^{(K)}( \bar R_1 \etc \bar R_K).
	\qe
	
\noindent As the matrices $\mathbf Y_N$ are diagonal, for any polynomials $P_1\etc P_K$, any polynomial $Q(\mathbf y)$ and any $k=1\etc K$, one has 
	\eq
		\Phi^{(K)}\big( P_1\etc P_{k-1}, P_kQ(\mathbf y ), P_{k+1} \etc P_K) =   \Phi^{(K)}\big( P_1\etc P_{k-1},Q(\mathbf y ) P_k, P_{k+1} \etc P_K).
	\qe

\noindent Hence we get as expected
	\eq
		\Phi(x_jP) & =& \sum_{K\geq 1} \  a_{j,K} \sum_{    x_jP = (x_jL_1x_j)R_1 \dots (x_j L_K x_j)R_K }   \Phi^{(K)}(   L_1 \etc   L_K) \times \Phi^{(K)}( R_1 \etc  R_K).
	\qe
	
\noindent{\it Step 5: the general case}
\\Let $P_1\etc P_K$ be monomials of the form
	\eq
		P_1 & = & P_{1,1}(\mathbf y) \times x_{\gamma_1(2)} P_{1,2}(\mathbf y) \dots  x_{\gamma_1(L_1)} P_{1,L_1}(\mathbf y)\\
		P_k & = & x_{\gamma_k(1)} P_{k,1}(\mathbf y) \dots  x_{\gamma_k(L_k)} P_{k,L_k}(\mathbf y), \ k =2 \etc K.
	\qe

\noindent By setting $\gamma$ and $\mathbf L$ as in Proposition \ref{Prop:ComputMoments2}, we have 
	\eqa
		\Phi^{(K)}(x_jP_1 \etc P_K) = \sum_{(G,c) \in \mathcal L^{(\gamma)}_{\mathbf L}}   \omega_{HW}(G, c) \times  \omega_{TR}(P)(G, c).
	\qea

\noindent Let $(G,c)$ in $ \mathcal L^{(\gamma)}_{\mathbf L}$, and write $c$ as a composition of cycles of length $L_1 \etc L_K$, namely $c=c_1 \circ \dots c_K$. Saying that $c$ visits $\{1,2\}$ exactly $2k$ times is equivalent to say there exist non negative integers $s_1\etc s_K$ such that 
\begin{itemize}
	\item $s_1\geq 1$, $s_2\etc s_K\geq 0$,
	\item $s_1 + \dots+s_{K}=k$,
	\item for any $m=1\etc K$, the cycle $c_m$ visits $a$ exactly $2s_m$ times.
\end{itemize}
Assume that for any $m=1\etc K$, the cycle $c_m$ visits $\{1,2\}$ exactly $2s_m$ times. Then we get a decomposition 
		$$c_1 =  a \circ d^{(1,1)} \circ a^*  \circ e^{(1,1)} \circ a \circ d^{(1,2)} \circ a^* \circ e^{(1,2)} \circ   \dots  \circ a  \circ d^{(1,s_1)} \circ  a^*  \circ e^{(1,s_1)},$$
and for any $m=2\etc K$,
	$$c_m =  e^{(m,0)} \circ a \circ d^{(m,1)} \circ a^*  \circ e^{(m,1)} \circ a \circ d^{(m,2)} \circ a^* \circ e^{(m,2)} \circ   \dots  \circ a  \circ d^{(m,s_1)} \circ  a^*  \circ e^{(m,s_1)}.$$
The only difference is that the cycles $c_1\etc c_m$ are not constrained to visit $\{1,2\}$ during their first step. The rest of the proof can be written as we made for the proof of (\ref{TraceFormEq1}), without any new niceties. 
We the same reasoning as before, we obtain the expected result, i.e. Theorem \ref{th:SchwingerDyson}.
\end{proof}

\section[Proof of the characterization of the spectrum]{Proof of Proposition 2.13: a characterization of the spectrum of a single heavy Wigner matrix}

\noindent We manipulate truncated sums. We write computations based on Theorem \ref{th:SchwingerDyson} for the truncation
	\eq
		F^\lambda_N(K) : = \frac 1 {\lambda^K} \ \sum_{n=0}^N \ \sum_{n_1 + \dots + n_K =n} \ \frac 1 {\lambda^{n}}\Phi^{(K)}(x^{n_1+1},x^{n_2}\etc x^{n_K})
	\qe

\noindent of the formal power series associated to
	\eq
		\Phi^{(K)}\big( x (\lambda-x)^{-1}, (\lambda-x)^{-1} \etc (\lambda-x)^{-1} \big).
	\qe
\noindent Remark that it is equal in the sense of formal sums to 
	\eq
		\lambda G^\lambda(K) -G^\lambda(K-1),
	\qe
\noindent and then it enough to prove 
	\eq
		F^\lambda_N(K) \limN \frac 1 \lambda \bigg( \sum_{k\geq 1} a_k \binom{K+k-2}{K-1}G^{\lambda}  (k) G^{\lambda}  (k+K-1) \bigg),
	\qe

\noindent in the convergence of power series in $\frac 1 \lambda$. We fix an integer $N\geq 1$. By Theorem \ref{th:SchwingerDyson}, we have
	\eq
		F^\lambda_N(K)  = \lefteqn{ \frac 1 {\lambda^K} \ \sum_{n=0}^N \ \sum_{n_1 + \dots + n_K =n} \ \frac 1 {\lambda^{n}}\Phi^{(K)}(x^{n_1+1},x^{n_2}\etc x^{n_K})}\\
	& = & \frac 1 {\lambda^K} \sum_{n=0}^N \ \sum_{n_1 + \dots + n_K =n} \ \frac 1 {\lambda^n} \sum_{1\leq k\leq \frac {n+1}2 } a_k \\
	&    &  \times \ \ \  \sum_{   \substack { s_1 + \dots + s_K=k  \\ 1\leq  s_1\leq \frac {n_1+1}2  \\ 0\leq s_2\leq \frac {n_2}2 \etc 0\leq s_K\leq \frac {n_K}2}} \ \sum_{(\mathbf r, \mathbf t)} \Phi^{(k)}(x^{\mathbf r})\Phi^{(k+K-1)}(x^{\mathbf t}).
\end{eqnarray*}
The last sum is over all families of non negative integers
		$$\mathbf r = (r^{(1)}_1 \etc r^{(1)}_{s_1}\etc r^{(K)}_1 \etc r^{(K)}_{s_K}),$$
		$$\mathbf t = (t^{(1)}_1 \etc t^{(1)}_{s_1}, t^{(2)}_0 \etc t^{(2)}_{s_2}\etc t^{(K)}_0 \etc t^{(K)}_{s_K}),$$
such that
\begin{eqnarray*}
		r^{(1)}_1+\dots +r^{(1)}_{s_1} + t^{(1)}_1+\dots +t^{(1)}_{s_1} & = &  n_1+1-2s_1,\\
		r^{(i)}_1+\dots +r^{(i)}_{s_i} + t^{(i)}_0+\dots +t^{(i)}_{s_i} & = & n_i-2s_i, \ i=2 \etc K.
\end{eqnarray*}
We have used (and we will use) the notation 
		$$\Phi^{(k)}(x^{\mathbf r}) = \Phi^{(k)}( x^{r^{(1)}_1} \etc x^{r^{(1)}_{s_1}}\etc x^{r^{(K)}_1} \etc x^{r^{(K)}_{s_K}}).$$ The restrictions on the third and fourth sums follow from consideration on the degree on the monomials we compute. Now we interchange the order of summation of $(n_1 \etc n_K)$ and $(s_1 \etc s_K)$.
\begin{eqnarray*}
\lefteqn{ \frac 1 {\lambda^K} \ \sum_{n=0}^N \sum_{n_1 + \dots + n_K =n} \ \frac 1 {\lambda^{n}}\Phi^{(K)}(x^{n_1+1},x^{n_2}\etc x^{n_K})}\\
	& = & \frac 1 {\lambda^K} \sum_{1\leq k \leq \frac {N+1}2 } a_k \ \sum_{\substack{ s_1 + \dots + s_K =k \\ s_1\geq 1, \ s_2\etc s_K\geq 0}} \ \sum_{2k-1 \leq n \leq N }  \ \frac 1 {\lambda^n} \\
	&   & \ \ \ \times   \sum_{   \substack { n_1 + \dots + n_K=n  \\  n_1\geq 2s_1-1 \\ n_2\geq 2s_2 \etc n_K\geq 2s_K }} \ \sum_{\mathbf l} \sum_{\mathbf r } \Phi^{(k)}(x^{\mathbf r}) \sum_{\mathbf t} \Phi^{(k+K-1)}(x^{\mathbf t}).
\end{eqnarray*}
By the sum over $\mathbf l$, we mean the sum over all families of non negative integers $\mathbf l=(l_1 \etc l_K)$ such that
\begin{eqnarray*}
		0 \leq & l_1 & \leq n_1 +1-2s_1,\\
		0 \leq & l_2 & \leq n_2 -2s_2,\\
		\vdots &  & \\
		0 \leq & l_K & \leq n_K - 2s_K.
\end{eqnarray*}
By the sum over $\mathbf r$, we mean the sum over all families of non negative integers 
		$$\mathbf r = (r^{(1)}_1 \etc r^{(1)}_{s_1}\etc r^{(K)}_1 \etc r^{(K)}_{s_K}),$$
such that
\begin{eqnarray*}
		r^{(1)}_1+\dots +r^{(1)}_{s_1} & = &  l_1,\\
		r^{(2)}_1+\dots +r^{(2)}_{s_2} & = & l_2,\\
		\vdots & & \\
		r^{(K)}_1+\dots +r^{(K)}_{s_K} & = & l_K.
\end{eqnarray*}
At last, by the sum over $\mathbf t$, we mean the sum over all families of non negative integers 
		$$\mathbf t = (t^{(1)}_1 \etc t^{(1)}_{s_1}, t^{(2)}_0 \etc t^{(2)}_{s_2}\etc t^{(K)}_0 \etc t^{(K)}_{s_K}),$$
such that
\begin{eqnarray*}
		 t^{(1)}_1+\dots +t^{(1)}_{s_1} & = &  n_1+1-2s_1-l_1,\\
		 t^{(2)}_0+\dots +t^{(2)}_{s_2} & = & n_2-2s_2-l_2,\\
		\vdots & & \\
		t^{(K)}_0+\dots +t^{(K)}_{s_K} & = & n_k-2s_K-l_K.
\end{eqnarray*}
Given $k,s_1\etc s_2$ as in the previous formula, we set the change of variable for $n,n_1 \etc n_K$
\begin{eqnarray*}
		m & = & n+1-2k,\\
		m_1 & = & n_1+1-2s_1,\\
		m_2 & = & n_2 -2s_2,\\
		\vdots & & \\
		m_K & = & n_K -2s_K.\\
\end{eqnarray*}
Remark first that
		$$\frac 1 {\lambda^K} \times \frac 1 {\lambda^n} = \frac 1 {\lambda^m}\times  \frac 1 {\lambda^{k+K-1}} \times \frac 1 {\lambda^{k}}.$$
Hence we get
\begin{eqnarray*}
\lefteqn{ \frac 1 {\lambda^K} \ \sum_{n=0}^N \ \sum_{n_1 + \dots + n_K =n} \ \frac 1 {\lambda^{n}}\Phi^{(K)}(x^{n_1+1},x^{n_2}\etc x^{n_K})}\\
	& = & \sum_{1\leq k \leq \frac {N+1}2 } a_k   \sum_{\substack{ s_1 + \dots + s_K =k \\ s_1\geq 1, \ s_2\etc s_K\geq 0}}  \sum_{ m = 0}^{N+1-2k }    \frac 1 {\lambda^m} \\
	&    & \ \ \ \ \times  \sum_{ m_1 + \dots + m_K=m}   \sum_{\substack{ l_1 =0\dots m_1 \\ \dots \\ l_K=0 \dots m_K}} \sum_{\mathbf r } \frac 1 {\lambda^k}\Phi^{(k)}(x^{\mathbf r}) \sum_{\mathbf t}  \frac 1 {\lambda^{k+K-1}}\Phi^{(k+K-1)}(x^{\mathbf t}).
\end{eqnarray*}
The sum over $\mathbf r$ is the same as before, and now last sum is over all families of non negative integers 
		$$\mathbf t = (t^{(1)}_1 \etc t^{(1)}_{s_1}, t^{(2)}_0 \etc t^{(2)}_{s_2}\etc t^{(K)}_0 \etc t^{(K)}_{s_K}),$$
such that
\begin{eqnarray*}
		 t^{(1)}_1+\dots +t^{(1)}_{s_1} & = &  m_1-l_1,\\
		\vdots & & \\
		t^{(K)}_0+\dots +t^{(K)}_{s_K} & = & m_K-l_K.
\end{eqnarray*}
We replace the set variables $(m_1\etc m_K, l_1\etc l_K)$ by variables $p_1\etc p_K$ and $ q_1\etc q_K$ where for any $i=1\etc K$ we have set $p_i=m_i-l_i$ and $q_i=l_i$. Then we get
\begin{eqnarray*}
\lefteqn{ \frac 1 {\lambda^K} \ \sum_{n=0}^N \ \sum_{n_1 + \dots + n_K =n} \ \frac 1 {\lambda^{n}}\Phi^{(K)}(x^{n_1+1},x^{n_2}\etc x^{n_K})}\\
	& = & \sum_{1\leq k \leq \frac {N+1}2 } a_k  \  \sum_{ m = 0}^{N+1-2k }    \frac 1 {\lambda^m} \  \sum_{(\mathbf p, \mathbf q)} \  \sum_{\substack{ s_1 + \dots + s_K =k \\ s_1\geq 1, \ s_2\etc s_K\geq 0}} \\
	&     & \ \ \ \ \times   \sum_{\mathbf r} \ \frac 1 {\lambda^k}\Phi^{(k)}(x^{\mathbf r})  \ \sum_{\mathbf t}  \frac 1 {\lambda^{k+K-1}}\Phi^{(k+K-1)}(x^{\mathbf t}).
\end{eqnarray*}
The sum over $(\mathbf p, \mathbf q)$ is the sum over all families of non negative integers $\mathbf p=(p_1\etc p_K)$ and $\mathbf q= (q_1 \etc q_K)$ such that
		$$p_1+ \dots +p_K+q_1+\dots q_K =m.$$
The sum over $\mathbf r$ is the sum over all families of non negative integers 
		$$\mathbf r = (r^{(1)}_1 \etc r^{(1)}_{s_1}\etc r^{(K)}_1 \etc r^{(K)}_{s_K}),$$
such that
\begin{eqnarray*}
		r^{(1)}_1+\dots +r^{(1)}_{s_1} & = & q_1,\\
		\vdots & & \\
		r^{(K)}_1+\dots +r^{(K)}_{s_K} & = & q_K.
\end{eqnarray*}
The sum over $\mathbf t$ is the sum over all families of non negative integers 
		$$\mathbf t = (t^{(1)}_1 \etc t^{(1)}_{s_1}, t^{(2)}_0 \etc t^{(2)}_{s_2}\etc t^{(K)}_0 \etc t^{(K)}_{s_K}),$$
such that
\begin{eqnarray*}
		 t^{(1)}_1+\dots +t^{(1)}_{s_1} & = & p_1,\\
		 t^{(2)}_0+\dots +t^{(2)}_{s_2} & = & p_2,\\
		\vdots & & \\
		t^{(K)}_0+\dots +t^{(K)}_{s_K} & = &p_K.
\end{eqnarray*}
Let $K\geq 1$ and $k\geq 1$ be integers. Then there exist $\binom {K+k-2}{K-1}$ tuples of non negative integers $(s_1\etc s_K)$ such that $s_1+\dots s_K=k$, $s_1\geq 1$ and $s_2\etc s_K\geq 0$. Hence we get
\begin{eqnarray*}
\lefteqn{ \frac 1 {\lambda^K} \ \sum_{n=0}^N \ \sum_{n_1 + \dots + n_K =n} \ \frac 1 {\lambda^{n}}\Phi^{(K)}(x^{n_1+1},x^{n_2}\etc x^{n_K})}\\
	& = & \sum_{1\leq k \leq \frac {N+1}2 } a_k \ \binom {K+k-2}{K-1} \\
	& & \ \ \ \times   \sum_{0\leq p+q\leq N+1-2k} \ \frac 1 {\lambda^k}  \ \sum_{r_1+ \dots + r_k=q} \ \ \frac 1 {\lambda^q}\Phi^{(k)}(x^{r_1}\etc x^{r_q})  \\
	& & \ \ \ \times  \frac 1 {\lambda^{k+K-1}} \ \sum_{ t_1+\dots +t_{k+K-1}=p } \ \   \frac 1 {\lambda^{p}}\Phi^{(k+K-1)}(x^{t_1} \etc x^{t_{k+K-1}}).
\end{eqnarray*}
This gives the expected result by identification of the coefficients. The uniqueness of the solution of the equations follows directly from the observation of the valence of the formal power series.

\appendix

\section{On the possible parameters for a heavy Wigner matrix} \label{sec:Appendix}

\begin{Prop}\label{prop:NecessaryCondition} If a sequence $(a_k)_{k\geq 1}$ of real numbers is a parameter of a heavy Wigner matrix, then $(a_{k-1})_{k\geq 1}$ is the sequence of even moments of a Borel measure $m$ with finite moments, i.e. for any $k\geq 2$, $a_k=\int t^{2k-2}\mathrm{d}m(t)$. In particular, if the parameter $(a_k)_{k\geq 1}$ is non trivial then one has $a_k>0$ for any $k\geq 2$.
\end{Prop}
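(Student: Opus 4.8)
The plan is to read off the measure $m$ from the large-$N$ asymptotics of the laws of the entries of $M_N=\sqrt N X_N$. Recall that by the definition of a heavy Wigner matrix, the strictly sub-diagonal entries of $M_N$ are distributed according to a probability measure $\mu_N$ on $\mathbb C$, invariant by conjugation, possessing all its moments, and such that $\int |z|^{2k}\,\mathrm d\mu_N(z)=\big(a_k+o(1)\big)N^{k-1}$ for every $k\geq 1$; in particular each $a_k$ is a limit of nonnegative quantities, hence $a_k\geq 0$. Fix a random variable $Z_N$ with law $\mu_N$, let $\lambda_N$ denote the law of $|Z_N|^2/N$ on $[0,\infty)$, and introduce the \emph{finite} positive measure $\rho_N$ on $[0,\infty)$ defined by $\mathrm d\rho_N(s)=N\,s\,\mathrm d\lambda_N(s)$ (this is a finite measure because $\mu_N$ has a second moment). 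A direct change of variables gives, for every integer $k\geq 0$,
\[
\int s^k\,\mathrm d\rho_N(s)\;=\;N\int s^{k+1}\,\mathrm d\lambda_N(s)\;=\;\frac{\mathbb E\big[\,|Z_N|^{2(k+1)}\,\big]}{N^{(k+1)-1}}\;\limN\;a_{k+1}.
\]
Thus all moments of $\rho_N$ converge; in particular the total masses $\rho_N([0,\infty))\to a_1$ and the first moments $\int s\,\mathrm d\rho_N\to a_2$ are bounded in $N$.

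Next I would pass to a limiting measure. Since $\rho_N\big([M,\infty)\big)\leq M^{-1}\int s\,\mathrm d\rho_N$ is bounded uniformly in $N$ and tends to $0$ as $M\to\infty$, the family $\{\rho_N\}_N$ is tight, so some subsequence $\rho_{N_i}$ converges weakly to a finite positive measure $\rho$ on $[0,\infty)$. For each fixed $k\geq 0$ the family $\{s\mapsto s^k\}$ is uniformly integrable with respect to $\{\rho_{N_i}\}_i$, because $\int_{\{s^k>R\}} s^k\,\mathrm d\rho_{N_i}\leq R^{-1/\max(k,1)}\int s^{k+1}\,\mathrm d\rho_{N_i}$ is bounded and tends to $0$ as $R\to\infty$ uniformly in $i$; combined with weak convergence and tightness, this upgrades the weak convergence to convergence of $k$-th moments, so $\int s^k\,\mathrm d\rho(s)=a_{k+1}$ for every $k\geq 0$. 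Finally, let $m$ be the pushforward of $\rho$ under $s\mapsto\sqrt s$, a Borel measure on $[0,\infty)\subset\mathbb R$. For $k\geq 2$ one obtains $\int t^{2k-2}\,\mathrm dm(t)=\int s^{k-1}\,\mathrm d\rho(s)=a_k$; moreover all moments of $m$ are finite, the even ones being the $a_k$ and the odd ones being finite by Cauchy--Schwarz between two consecutive even moments. This proves the first assertion.

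The last assertion is then immediate: if the parameter is non-trivial there is some $k_0\geq 2$ with $a_{k_0}>0$, hence $\int t^{2k_0-2}\,\mathrm dm(t)>0$, so $m$ charges $\mathbb R\setminus\{0\}$; since on that set $t\mapsto t^{2k-2}$ is strictly positive for every $k\geq 2$, we get $a_k=\int t^{2k-2}\,\mathrm dm(t)>0$ for all $k\geq 2$.

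The main obstacle is the passage to the limit in the middle step: one must control both the escape of mass of the $\rho_N$ to infinity (handled by the uniform bound on $\int s\,\mathrm d\rho_N$) and, more delicately, the fact that weak convergence of measures does not by itself preserve moments — so the uniform-integrability argument (equivalently, a truncation estimate on $s^k\wedge R$) is what actually forces the equality $\int s^k\,\mathrm d\rho=a_{k+1}$ rather than merely an inequality. Everything else is bookkeeping about the successive changes of variables relating $\mu_N$, $\lambda_N$, $\rho_N$, $\rho$ and $m$.
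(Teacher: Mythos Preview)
Your proof is correct and takes a genuinely different route from the paper. The paper argues abstractly via Hamburger's moment criterion: after symmetrizing $\mu_N$ so that odd moments vanish, it plugs the test sequence $x_k=N^{-k/2}y_k$ (with $y_0=0$) into the positive-semidefiniteness inequality $\sum_{j,k}\mu^{(N)}(j+k)x_j\bar x_k\geq 0$, divides by $N$, and passes to the limit to conclude that the sequence $(a_{\lfloor k/2\rfloor+1})_{k\geq 0}$ (with the convention $a_k=0$ for half-integer $k$) satisfies Hamburger's condition, hence is a moment sequence.

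Your approach is instead constructive: you build the limiting measure directly as a weak subsequential limit of the tilted laws $\mathrm d\rho_N(s)=Ns\,\mathrm d\lambda_N(s)$, using the uniform bound on $\int s\,\mathrm d\rho_N$ for tightness and on $\int s^{k+1}\,\mathrm d\rho_N$ for the uniform integrability needed to pass moments to the limit. This buys you an explicit interpretation of $m$ as a scaling limit of the distribution of $|Z_N|^2/N$ weighted by size, and it shows for free that $m$ can be taken supported on $[0,\infty)$ with total mass $a_1$. The paper's argument is shorter once Hamburger's theorem is quoted, but yours is self-contained and more informative about where $m$ comes from. Both routes yield the final positivity claim by the same observation: a nonzero even moment forces $m$ to charge $(0,\infty)$, hence all even moments are strictly positive.
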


\noindent By the Hamburger's theorem \cite{HAM}, a sequence of real numbers $\big( \mu(k) \big)_{k\geq 1}$ is a sequence of moments if and only if, for any sequence $(x_k)_{k\geq 0}$ of complex numbers with finite support, one has 

\begin{equation}\label{eq:Hamburger}
		\sum_{j,k\geq 0} \mu(j+k) x_j \bar x_k\geq 0.
\end{equation}
		
\noindent Let $X_N$ be a heavy Wigner matrix of parameter $(a_k)_{k\geq 1}$ et let $\mu_N$ be the common law the sub-diagonal entries of $\sqrt N X_N$. As we do not change the parameter of a heavy Wigner matrices when we change sub-diagonal entries $X_{i,j}$ into $\eps | X_{i,j} |$, where $\eps$ is a random uniform sign, we can always assume that $\mu_N$ is real and symmetric for the task of the Proposition. Denote by $(\mu^{(N)}(k))_{k\geq 0}$ its sequence of moments. For any sequence $(y_k)_{k\geq 1}$ of complex numbers with finite support such that $y_0=0$, we apply (\ref{eq:Hamburger}) with $(x_k)_{k\geq 1}=(N^{-\frac k 2} y_k)_{k\geq 1}$: we get 

\begin{eqnarray*}
		\sum_{j,k\geq 0} \mu^{(N)}(j+k) x_j \bar x_k &= &N \sum_{j,k\geq 1} \frac {\mu^{(N)}(j+k)}{N^{ \frac {j+k}2-1}} y_j \bar y_k \\
		& = & N \sum_{j,k\geq 1} a_{\frac{j+k}2} y_{j} \bar y_{k} + o(1)\\
		& = & N \sum_{j,k\geq 0} a_{\frac{j+k}2+1} y_{j+1} \bar y_{k+1} + o(1),
\end{eqnarray*}
where we have set $a_{k}=0$ whenever $k$ is odd. Then, the sequence $( a_{\frac{k}2+1})_{k\geq 1}$ satisfies (\ref{eq:Hamburger}), which gives the proposition.
\\
\\
\par{\bf Acknowledgment:}
\\The author would like to gratefully thank Alice Guionnet, Florent Benaych-Georges, Mikael de la Salle, Djalil Chafa\"i and Charles Bordenave for useful discussions. He also acknowledges the financial support of the ANR GranMa.
\\
\\This paper has known significant progress during some travels in 2011/12: two visits of Ashkan Nikeghbali and Kim Dang at the Institut f\"ur Mathematik of Zurich, the school ''Vicious walkers and random matrices`` in les Houches organized by the CNRS, and the summer school ''Random matrix theory and its applications to high-dimensional statistics`` in Changchun, founded jointly by the CNRS of France and the NSF of China. The author gratefully acknowledges the organizers of these events for providing an inspiring working environment.

\bibliographystyle{plain} 
\bibliography{biblio}

\end{document}